\newtheorem{thm}{Theorem}[section]
\newtheorem{lem}[thm]{Lemma}
\newtheorem{prop}[thm]{Proposition}
\newtheorem{hypo}[thm]{Hypothesis}
\newtheorem{conj}[thm]{Conjecture}
\newtheorem*{claim}{Claim}
\theoremstyle{definition}
\newtheorem{defi}[thm]{Definition}
\def\B{\mathcal{B}}
\def\C{\mathbf{C}}
\def\G{\Gamma}
\def\K{\mathbf{K}}
\def\Z{\mathbb{Z}}
\DeclareMathOperator{\aut}{Aut}
\DeclareMathOperator{\cdc}{B}
\DeclareMathOperator{\vertex}{V}
\DeclareMathOperator{\edge}{E}
\DeclareMathOperator{\orb}{Orb}
\newcommand{\sg}[1]{\langle {#1}\rangle}
\title[Stability of Rose Window graphs]
{Stability of Rose Window graphs}
\author[M. Ahanjideh, I. Kov\'acs, K. Kutnar]
{Milad Ahanjideh$^{\, 1}$, Istv\'an Kov\'acs$^{\, 2\, *}$, Klavdija Kutnar$^{\, 3}$} 
\address{M. Ahanjideh, I.~Kov\'acs, K.~Kutnar  
\newline\indent
UP IAM, University of Primorska, Muzejski trg 2, SI-6000 Koper, Slovenia 
\newline\indent
UP FAMNIT, University of Primorska, Glagol\v jaska ulica 8, SI-6000 Koper, Slovenia}
\email{milad.ahanjideh@upr.si}
\email{istvan.kovacs@upr.si} 
\email{klavdija.kutnar@upr.si} 
\thanks{$^1$~Supported by the Slovenian Research Agency (reserach project 
N1-0208). 
\newline\indent 
$^2$~Supported by the Slovenian Research Agency 
(research program P1-0285 and research projects J1-1695, N1-0140, J1-2451, N1-0208,  J1-3001).
\newline\indent 
$^3$~Supported by the Slovenian Research Agency 
(research program P1-0285 and research projects J1-1695, J1-1715, N1-0140, J1-2451, J1-2481, N1-0209, J1-3001).
\newline \indent * Corresponding author}
\keywords{Rose Window graph, stable graph, edge-transitive}
\subjclass[2020]{05C25, 20B25}
\begin{document}
\maketitle

\begin{abstract}
A graph $\G$ is said to be stable if for the direct product $\G \times \K_2$, 
$\aut(\G \times \K_2)$ is isomorphic to $\aut(\G) \times  \Z_2$; otherwise, 
it is called unstable. An unstable graph is called non-trivially unstable when it 
is not bipartite and no two vertices have the same neighborhood. 
Wilson described nine families of unstable Rose Window graphs and conjectured that these contain all non-trivially unstable Rose Window graphs (2008). In this paper we show that the conjecture is true.  
\end{abstract}
%------------------------------------------------------------------------------------%
\section{Introduction}\label{sec:intro}
%------------------------------------------------------------------------------------%
Let $\G$ be a finite and simple graph. The \emph{direct product} 
$\G \times \K_2$ of $\G$ with the complete graph $\K_2$, also known as the \emph{canonical double cover} of $\G$, has vertex set $\vertex(\G) \times \{0,1\}$ and edges  
$\{(u,0),(v,1)\}$, where $\{u,v\} \in \edge(\G)$. Following~\cite{W2}, here we shall rather use the notation $\cdc(\G)$ for $\G \times \K_2$.  
The graph $\cdc(\G)$ admits some natural automorphisms, namely the permutation 
\begin{equation}\label{eq:expect1}
(v,i) \mapsto (v,1-i),~v \in \vertex(\G),~i=0,1;
\end{equation} 
and for every $\alpha \in \aut(\G)$, the permutation 
\begin{equation}\label{eq:expect2}
(v,i) \mapsto (\alpha(v),i),~v \in \vertex(\G),~i=0,1.
\end{equation}
The above permutations can be easily seen to form a group, which is isomorphic to 
$\aut(\G) \times \Z_2$. The graph $\G$ is said to be \emph{stable} if 
$\aut(\cdc(\G)) \cong \aut(\G) \times \Z_2$, and \emph{unstable} otherwise. 
This concept of stability was introduced by Maru\v{s}i\v{c} et al.\ ~\cite{MSZ}. It    
plays a key role in the investigation of the automorphisms of the 
direct products of arbitrary graphs (see~\cite{W-M21}) and also in the study of regular embeddings of canonical double covers on orientable surfaces (see~\cite{NS}). 
Arc-transitive graphs in terms of their stability were investigated by Surowski~\cite{S}. Recently, this topic has been studied by several authors~\cite{FH, HM, HMW21a, HMW21b, QXZ19, QXZ21}.   

It is well known that $\G$ is unstable whenever it is disconnected, or  
bipartite having a non-trivial automorphism group, or it has two vertices with 
the same neighborhood (see, e.g.,~\cite{W2}), and in each of these 
cases $\G$ is also called \emph{trivially unstable}. Necessary and sufficient   
conditions for a graph to be non-trivially unstable were given by Wilson~\cite{W2}, and 
in the same paper these conditions were also applied to 
four infinite classes of graphs: 
\begin{itemize}
\item the circulant graphs,
\item the generalized Petersen graphs,
\item the Rose Window graphs,
\item the toroidal graphs.  
\end{itemize}
Various families of unstable graphs were constructed for each class and 
Wilson conjectured that these families cover all non-trivially unstable graphs in the given class. The conjecture has been confirmed for the generalized Petersen graphs~\cite{QXZ21} and for the toroidal graphs~\cite{W-M23},  whereas it has been disproved for the class of circulant graphs~\cite{QXZ19}.  To the best of our knowledge, a characterization of the unstable circulant graphs of order $n$ and valency $k$ is known only in special cases:  $n$ is odd~\cite{FH,QXZ19}, $n=2p$ for a prime $p$~\cite{HMW21a} 
and $k \le 7$~\cite{HMW21b}. Up to now, no progress has been made on the 
Rose Window graphs. The aim of this paper is to fill in this gap and 
show that the conjecture for the Rose Window graphs is also true. 

\begin{defi}[\cite{W1}]
\label{def:W}
Let $n$ be an integer such that $n \ge 3$ and let $a, r \in \Z_n$, $a, r \ne 0$. 
The \emph{Rose Window graph} $R_n(a,r)$ is defined to have the vertex set 
$\{u_i, v_i : i \in \Z_n\}$, and edges
$$
\{u_i,u_{i+1}\},~\{v_i,v_{i+r}\},~\{ u_i,v_i\},~\{u_{i+a},v_i\},~i \in \Z_n,  
$$
where the additions in the subscripts are done in $\Z_n$. 
\end{defi}

Here and in what follows, $\Z_n$ denotes the ring of residue classes of integers 
modulo $n$. Throughout the paper we regard elements of $\Z_n$ simultaneously as integers. 
The Rose Window graphs can be regarded as tetravalent analogues of the well-studied generalized Petersen graphs. Definition~\ref{def:W} is due to Wilson \cite{W1},  who was  interested primarily in constructing edge-transitive Rose Window graphs and rotary maps with underlying Rose Window graphs. It was shown later that all edge-transitive Rose Window graphs are covered by Wilson's constructions~\cite{KKM}. For various  interesting properties of Rose Window graphs, we refer to~\cite{DKM, HRS, JMM, KKM}. 

\renewcommand{\arraystretch}{1.25}
\begin{table}[t!]
{\small 
\begin{tabular}{|c|c|c|} \hline
family & $(n,a,r)$ & conditions \\ \hline
W1 & $(4m,2m,r)$ & -- \\ \hline
W2 & $(2m,m+2,m+1)$ & -- \\ \hline
W3 & $(m,2d,1)$ & $m$ is odd, $m \ge 5$, $d^2 \equiv \pm 1\!\!\pmod m$, 
$d \not\equiv \pm 1\!\!\pmod m$. \\ \hline
W4 & $(2m,a,m-1)$ & $a \ne m$ \\ \hline 
W5 & $(8m,a,2m)$ & $a$ is even \\ \hline 
W6 & $(4m,m,r)$, & $m, r$ are odd, $m \ge 3$ \\ \hline 
W7 & $(2m,m,m-1)$ & $m$ is odd \\ \hline
W8 & $(2m,m,r)$ & $m$ is odd, $r$ is even,   
$r^2 \equiv \pm 1\!\! \pmod m$, $r \not\equiv \pm 1 \!\!\pmod m$ \\ \hline
W9 & $(2m,a,m-s)$ & $s^2 \equiv 1 \!\!\pmod n$, $sa \equiv -a \!\!\pmod n$, $a < m$. \\ \hline 
\end{tabular}
}
\\ [+1.5ex]
\caption{The nine families of unstable Rose Window graphs $R_n(a,r)$.}
\end{table}

Regarding stability, Wilson found nine families of unstable Rose Window graphs, which are listed in Table~1 (see \cite[Theorems~R.1--R.9]{W2}).  
He checked by computer that these families contain all non-trivially unstable Rose Window graphs on up to $200$ vertices and posed the following conjecture. 

\begin{conj}[\cite{W2}]
Every non-trivially unstable Rose Window graph is isomorphic to a graph in one of the families W1--W9 given in Table~1.
\end{conj}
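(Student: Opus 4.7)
The plan is to adapt to Rose Window graphs the two-fold automorphism framework that was used for generalized Petersen graphs in~\cite{QXZ21}. Recall that a \emph{two-fold automorphism} of a graph $\G$ is a pair $(\pi,\sigma)$ of permutations of $\vertex(\G)$ such that $\{x,y\}\in \edge(\G)$ if and only if $\{\pi(x),\sigma(y)\}\in \edge(\G)$. It is standard that a non-bipartite graph $\G$ is stable exactly when every two-fold automorphism satisfies $\pi=\sigma$. So, assuming that $R=R_n(a,r)$ is non-trivially unstable, one fixes a two-fold automorphism $(\pi,\sigma)$ of $R$ with $\pi\ne\sigma$, and the task becomes to deduce that $(n,a,r)$ satisfies the conditions of one of the rows of Table~1.

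The first phase is to rigidify $\pi$ and $\sigma$. Since $R$ is non-bipartite and has no twin vertices, the natural invariants of $R$---the outer $n$-cycle on $\{u_i\}$, the collection of $\gcd(n,r)$ inner cycles on $\{v_i\}$, the two spoke types $\{u_i,v_i\}$ and $\{u_{i+a},v_i\}$, and the $4$-cycles through each vertex---impose strong coordinated constraints on the pair $(\pi,\sigma)$. Combining this local analysis with the classification of edge-transitive Rose Window graphs~\cite{KKM} (to exclude sporadic or exceptional symmetries), I expect to show that $\pi$ and $\sigma$ each act on the index set $\Z_n$ by an affine map $i\mapsto \epsilon i+c$ for some $\epsilon\in\{\pm 1\}$, possibly preceded by an involution exchanging $u_i$ with $v_j$ for an appropriate $j$. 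This reduces the problem to a finite list of candidate pairs $(\pi,\sigma)$ described by a small set of integer parameters.

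The second phase is to substitute each candidate pair into the defining adjacency equivalence and to rewrite the four types of edges of $R$ as congruences modulo $n$ relating $a$, $r$ and the parameters of $(\pi,\sigma)$. A case-by-case solution of these congruences, organized by (i)~whether $\pi$ and $\sigma$ preserve or swap the partition $\vertex(R)=\{u_i\}\cup\{v_i\}$, (ii)~the sign $\epsilon$, and (iii)~the relative shift between $\pi$ and $\sigma$, should exhibit each solution as precisely one of the entries of Table~1. Heuristically, the cases involving the $u\leftrightarrow v$ swap account for families W2 and W6--W9 (where either $r$ is close to $\pm 1$ or $a=n/2$), while the cases preserving the $u/v$ partition account for W1, W3, W4 and W5.

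The main obstacle will be the bookkeeping in the degenerate parameter ranges. When $n$ is even and $a=n/2$, or when $r\in\{\pm 1\}$ in $\Z_n$, the graph $R$ acquires additional $4$-cycles and accidental neighborhood coincidences, and a two-fold automorphism produced by the analysis may actually witness bipartiteness or the existence of twin vertices---that is, only \emph{trivial} instability. Separating these degenerate phenomena from genuine non-trivial instability, and handling the small-$n$ coincidences that arise when several reductions overlap, is where the technical weight of the argument will lie. Once this is done, matching each remaining consistent solution to the corresponding row of Table~1 is a direct verification.
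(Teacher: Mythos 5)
The central step of your plan---that the invariants of $R_n(a,r)$ force every two-fold automorphism $(\pi,\sigma)$ with $\pi\ne\sigma$ to act on the index set $\Z_n$ by an affine map---is exactly where the difficulty of the problem lies, and you give no argument for it. In a two-fold automorphism neither $\pi$ nor $\sigma$ is individually an automorphism of $R_n(a,r)$, so neither need preserve the outer cycle, the inner cycles or the spoke types; the ``coordinated constraints'' you invoke do not pin down $\pi$ and $\sigma$ separately. Translated to the canonical double cover (which is the equivalent language the paper uses), an unexpected automorphism of $\cdc(\G)$ may interchange the rim and hub cycles, and when $\cdc(\G)$ is edge-transitive the group $\aut(\cdc(\G))$ can a priori be far from a group of affine maps: vertex stabilizers could be large $2$-groups or isomorphic to $A_4$ or $S_4$. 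Controlling this is the technical heart of the paper, and it is done case by case: for $n$ odd, $\cdc(\G)$ is itself a Rose Window graph $R_{2n}(a',r')$ and the classification of edge-transitive Rose Window graphs is applied; for $n$ even with $\cdc(\G)$ not edge-transitive, the two rim and two hub cycles form a block system whose kernel is analysed; for $n$ even with $\cdc(\G)$ edge-transitive, one needs the classification of $(G,s)$-transitive tetravalent graphs, Lucchini's bound on core-free cyclic subgroups to produce a normal cyclic subgroup, an induction on $n$ through quotient covers, and computer verification of the base cases ($n\le 48$, resp.\ $n\le 12$). None of these ingredients is supplied or replaced by your outline, so the rigidification claim is a genuine gap rather than a routine reduction.

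A secondary concern: even if the affine form were established, your second phase only matches \emph{solutions} of the resulting congruences to rows of Table~1; to prove the conjecture you must show that every non-trivially unstable $R_n(a,r)$ admits \emph{only} such affine-type unexpected symmetries, i.e.\ the rigidification must be proved for all unstable parameters, not merely be consistent with the known families. Your own heuristic assignment also misplaces some families (for instance W3, the only family with $n$ odd, arises in the paper from $\cdc(\G)$ being an edge-transitive Rose Window graph of Wilson's family (d), not from a $u/v$-preserving affine pair). As written the proposal is a programme whose key lemma is unsupported, not a proof.
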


In this paper we show that the conjecture is true by proving the following theorem.

\begin{thm}\label{main}
Let $\G=R_n(a,r)$ be a non-trivially unstable Rose Window graph. 
\begin{enumerate}[{\rm (i)}]
\item If $n$ is odd, then $\G$ is isomorphic to a graph in the family W3. 
\item If $n$ is even and $\cdc(\G)$ is not edge-transitive, then 
$\G$ is isomorphic to a graph in one of the families W1 and W4--W9.
\item If $n$ is even and $\cdc(\G)$ is edge-transitive, then $\G$ is isomorphic to 
a graph in one of the families W2 and W4. 
\end{enumerate}
\end{thm}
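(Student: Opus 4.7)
The plan is to pick an unexpected automorphism $\sigma\in\aut(\cdc(\G))$, i.e.\ one that does not lie in the subgroup generated by the maps of the forms~\eqref{eq:expect1} and~\eqref{eq:expect2}, and then to translate the combinatorial action of $\sigma$ on $\cdc(\G)$ into arithmetic restrictions on the triple $(n,a,r)$. Writing the vertices of $\cdc(\G)$ as $x_i^{(j)}=(u_i,j)$ and $y_i^{(j)}=(v_i,j)$ with $i\in\Z_n$ and $j\in\{0,1\}$, the rotation $\rho\colon u_i\mapsto u_{i+1},\ v_i\mapsto v_{i+1}$ of $\G$ lifts to a rotation $\tilde\rho$ of $\cdc(\G)$ and, together with the colour swap in~\eqref{eq:expect1}, generates an ``expected'' subgroup $H\cong\Z_n\times\Z_2$. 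Using that $\G$ is connected, non-bipartite and twin-free, one can normalise $\sigma$ (after composing with a suitable expected automorphism) so that $\sigma(x_0^{(0)})=x_0^{(0)}$ and so that $\sigma$ either preserves or swaps the canonical bipartition of $\cdc(\G)$.

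Next, I would analyse how $\sigma$ permutes the four neighbours of $x_0^{(0)}$, which split naturally into two \emph{rim} vertices $x_{\pm 1}^{(1)}$ and two \emph{spoke} vertices $y_0^{(1)},\,y_{-a}^{(1)}$; combined with the action of $\sigma$ on the $4$-cycles through $x_0^{(0)}$, this local data pins $\sigma$ down up to an element of $H$. Projecting each admissible possibility back to $\G$, the induced map becomes one of the following: a $\rho$-compatible scaling $i\mapsto di$ of rim and hub, a swap of rim and hub, a swap of the two spoke types, a half-turn $i\mapsto -i+c$, or a composite of the latter. For part~(i), with $n$ odd, the odd rim lifts to a single $2n$-cycle in $\cdc(\G)$ and the spoke-type partition is rigid, so the only surviving freedom is a scaling exchanging the spoke types; this forces $r=1$ and $d^2\equiv\pm 1\pmod n$, while the twin-free hypothesis rules out $d\equiv\pm 1\pmod n$, yielding precisely family W3. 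For part~(iii), with $n$ even and $\cdc(\G)$ edge-transitive, the $4$-valent bipartite graph $\cdc(\G)$ is highly constrained: combining the presence of $\tilde\rho$ and the transitive action with the edge-transitive Rose Window classification of~\cite{KKM} (applied to $\G$ or to a Rose Window quotient of $\cdc(\G)$), one checks that the only resulting configurations sit in W2 and W4.

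The main obstacle will be part~(ii), where $\cdc(\G)$ is not edge-transitive and so $\sigma$ need only preserve a proper refinement of the natural edge classes. I would split on the parity of $a$ and on $\gcd(n,r)$, and for each of the admissible local behaviours of $\sigma$ listed above verify what it extends to: a half-turn of the rim produces W1 when $a\equiv n/2\pmod n$ and W6 otherwise; a swap of the two spoke types combined with a reflection of the hub produces one of W4, W7, W8, or W9 depending on which of the arithmetic relations $r^2\equiv\pm 1\pmod m$ and $sa\equiv -a\pmod n$ are forced; and a combined half-turn with translation $i\mapsto i+2m$ yields W5. Showing that this list is \emph{exhaustive} is the core combinatorial task: for every possible local action of $\sigma$ at $x_0^{(0)}$ one must extend to a global permutation of $V(\cdc(\G))$, verify which $4$- and $6$-cycles and edge orbits are preserved, rule out spurious coincidences via the non-bipartite and twin-free hypotheses, and collect the surviving parameter relations into the rows of Table~1.
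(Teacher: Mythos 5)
Your overall plan---normalise an unexpected automorphism $\sigma$ and convert its action on the rim/hub/spoke structure of $\cdc(\G)$ into congruences on $(n,a,r)$---matches the paper's strategy only for part~(ii), and even there the load-bearing step is missing. You assert that the action of $\sigma$ on the neighbours of $x_0^{(0)}$ and on the $4$-cycles through it ``pins $\sigma$ down up to an element of $H$.'' That is precisely what is \emph{not} true in general: the whole point is that for certain parameter values the local data fails to determine $\sigma$, and those failures are exactly the unstable families. The paper makes this precise through a rigidity lemma (Lemma~\ref{pointwise}) showing that an automorphism fixing one rim cycle pointwise is trivial \emph{only under} arithmetic hypotheses such as $4r\not\equiv 0\pmod n$ or $4a\not\equiv 0\pmod n$; the negations of these hypotheses are then harvested as the defining congruences of W1, W5, W6, etc. Without a quantitative statement of this kind, your case analysis in part~(ii) has no mechanism for proving exhaustiveness, which you yourself flag as ``the core combinatorial task'' but do not carry out.

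The gaps in parts~(i) and~(iii) are more serious because your local-analysis template does not apply there at all. For $n$ odd the paper does something quite different: it exhibits an explicit isomorphism $\cdc(R_n(a,r))\cong R_{2n}(a',r')$, counts edge-orbits of the latter using the known order formulas $|\aut|=2n$ or $4n$ for non-edge-transitive Rose Window graphs (Propositions~\ref{DKM1}--\ref{DKM3}) to rule out $\ell\in\{2,3\}$, and then reads off W3 from the classification of edge-transitive Rose Window graphs (Proposition~\ref{KKM1}); your claim that the local analysis ``forces $r=1$ and $d^2\equiv\pm1$'' is unsupported and I do not see how to derive it without this detour. For part~(iii), edge-transitivity of $\cdc(\G)$ allows the vertex stabiliser to be $A_4$ or $S_4$ (Proposition~\ref{s-AT}), so $\sigma$ is emphatically not determined up to $H$ by its local action, and the rotation subgroup $C=\sg{\rho}$ need not even be normal. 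The paper's treatment requires Lucchini's bound $|C|^2<|G|$ for core-free cyclic subgroups, a normal-quotient reduction with induction on $n$, block-system arguments, and computer verification of the base cases; none of this is replaceable by the single-vertex local analysis you propose, so as written part~(iii) would fail.
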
  
%------------------------------------------------------------------------------------%
\section{Preliminaries}\label{sec:known}
%------------------------------------------------------------------------------------%
All groups in this paper will be finite and all graphs will be finite and simple.  
If $\G$ is a graph, then $\vertex(\G)$, $\edge(\G)$ and $\aut(\G)$ denote its vertex set, edge set and automorphism group, respectively. 
Given two vertices $u, v \in \vertex(\G)$, we write $u \sim v$ when $u$ and $v$ are adjacent. For $v \in \vertex(\G)$, the set of vertices adjacent with $v$ is denoted by $\G(v)$, and for a non-empty subset $S \subseteq \vertex(\G)$, the subgraph of $\G$ induced by $S$ is denoted by $\G[S]$. Suppose that $\G$ is bipartite, i.e., $\vertex(\G)$ can be partitioned into two sets 
$V_1$ and $V_2$ such that no edge of $\G$ lies entirely in either $V_1$ or $V_2$. Throughout the paper we will refer to the sets $V_1$ and $V_2$ as biparts of $\G$. 
By $\K_n$ and $\C_n$ we denote the complete graph and the cycle on $n$ vertices, respectively, and the symbol $n\G$ will be used for $n$ disjoint copies of $\G$. 

If $G \le \aut(\G)$, then the stabilizer of $v$ in $G$ is denoted by 
$G_v$, the orbit of $v$ under $G$ by $\orb_G(v)$, and the set of all orbits under $G$ by $\orb(G,\vertex(\G))$. 
If $S \subseteq \vertex(\G)$, then the setwise and the pointwise stabilizers of $S$ in $G$ are denoted by $G_{\{S\}}$ and $G_{(S)}$, respectively. 

Let $s \ge 1$ be an integer. An \emph{$s$-arc} of $\G$ is an $(s+1)$-tuple 
$(v_1,\ldots,v_{s+1})$ of its vertices such that $v_i$ is adjacent with 
$v_{i+1}$ for each $1 \le i \le s$, and $v_i \ne v_{i+2}$ for each $1 \le i \le s-1$. 
The graph $\G$ is said to be \emph{$(G,s)$-arc-transitive} if $G \le \aut(\G)$ and $G$ is transitive on the set of all $s$-arcs of $\G$. A $(G,s)$-arc-transitive graph is called  
\emph{$(G,s)$-transitive} if it is not $(G,s+1)$-arc-transitive.    
An $(\aut(\G),s)$-transitive graph is also 
called \emph{$s$-transitive}, and an
$(\aut(\G),1)$-arc-transitive graph is simply called \emph{arc-transitive}.
\medskip

We shall need the following result about $(G,s)$-transitive tetravalent graphs. 

\begin{prop}[{\rm \cite[Chapter~17]{Bbook}}]\label{s-AT}
Let $\G$ be a connected $(G,s)$-transitive tetravalent graph with $s \le 2$ 
and let $v \in \vertex(\G)$. Then exactly one of following statements is true.
\begin{enumerate}[{\rm (i)}]
\item $s=1$ and $G_v$ is a $2$-group.
\item $s=2$ and $G_v \cong A_4$ or $S_4$. 
\end{enumerate}
\end{prop}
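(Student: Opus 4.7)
The plan is to analyze the action of the vertex stabilizer $G_v$ on the neighborhood $\G(v)$, a set of four vertices. Let $L \le \sym(\G(v))$ be the induced permutation group and let $K$ be its kernel (the pointwise stabilizer of $\G(v)$ in $G_v$), so that $G_v/K \cong L$. Since $G$ is arc-transitive, $L$ is a transitive subgroup of $S_4$; up to conjugacy the candidates are $\Z_4$, the Klein four-group $V_4$, the dihedral group $D_4$ of order~$8$, $A_4$, and $S_4$, and only the last two act $2$-transitively on four points.

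The two cases of the proposition match the two possible behaviours of $L$. When $s=1$, the failure of $(G,2)$-arc-transitivity is equivalent to $L$ not being $2$-transitive on $\G(v)$, so $L \in \{\Z_4, V_4, D_4\}$, each of which is a $2$-group. When $s=2$, the group $L$ is $2$-transitive and $|L| \ge 12$, forcing $L \cong A_4$ or $S_4$. It remains in each case to control the kernel $K$: to prove part~(i) one shows that $K$ is a $2$-group (so that $G_v$ is a $2$-group), and to prove part~(ii) one shows $K = 1$ (so that $G_v \cong L$).

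The control of $K$ is the crux of the argument and is where the proof demands genuine work. It proceeds by a Tutte--Weiss style propagation. Any $\tau \in K$ fixes $v$ and all four of its neighbors, so for each $u \in \G(v)$ the element $\tau$ lies in $G_u$ and induces on $\G(u)$ an element belonging to the stabilizer of $v$ inside the local group at $u$. Iterating this observation along a walk in $\G$, one shows that an element $\tau$ whose order does not divide the order of these point-stabilizers must act trivially on ever-larger subsets of vertices; by connectedness of $\G$ this forces $\tau = 1$. In part~(i) this rules out odd-prime-order elements of $K$ and yields that $G_v$ is a $2$-group. In part~(ii) the analogous analysis shows that any nontrivial $\tau \in K$ would force the stabilizer $G_{v,u,w}$ of a $2$-arc $(v,u,w)$ to act transitively on $\G(w) \setminus \{u\}$, yielding $(G,3)$-arc-transitivity and contradicting the assumption $s = 2$; hence $K = 1$. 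The complete proof can be found in~\cite[Chapter~17]{Bbook}.
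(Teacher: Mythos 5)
The paper offers no proof of this proposition---it is quoted as a black box from \cite[Chapter~17]{Bbook}---so there is no internal argument to compare against; your sketch reconstructs the standard one (local action of $G_v$ on $\G(v)$, then control of the kernel $K$ by propagation and connectedness), and that overall strategy is the right one. Three places in the sketch need repair before it is a proof. First, in part~(ii) the $2$-arcs you should use are those \emph{centred} at $v$, say $(u,v,u')$ with $u,u'\in\G(v)$: the kernel $K$ (the pointwise stabilizer of $\{v\}\cup\G(v)$ in $G_v$) fixes such a $2$-arc pointwise, whereas for your $2$-arc $(v,u,w)$ the vertex $w$ lies at distance two from $v$ and need not be fixed by $K$, so $K\le G_{v,u,w}$ is not guaranteed. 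Second, the inference ``$K$ acts nontrivially on $\G(u')\setminus\{v\}$, hence transitively'' fails for arbitrary subgroups (a subgroup of order $2$ in $S_3$ is nontrivial and intransitive); it holds here because $K\trianglelefteq G_{u'v}$, so the group induced by $K$ on $\G(u')$ is a \emph{normal} subgroup of the local point stabilizer, which is $\Z_3$ or $S_3$, and every nontrivial normal subgroup of these is transitive on three points. Third, the complementary case in which $K$ acts trivially on $\G(u)$ for every $u\in\G(v)$ is not an immediate contradiction; it is precisely where vertex-transitivity and connectedness enter, since then the pointwise stabilizers of closed neighbourhoods coincide for adjacent vertices and therefore fix every vertex of the connected graph. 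With these points made explicit, parts (i) and (ii) go through as you describe.
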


Let $G \le \aut(\cdc(\G))$ be the group generated by the automorphisms defined in \eqref{eq:expect1} and \eqref{eq:expect2}. 
The elements in $G$ are referred to as the \emph{expected automorphisms} of 
$\cdc(\G)$.
  
\begin{prop}[{\rm \cite{HM}}]\label{HM}
Let $\G$ be a connected and non-bipartite graph. An automorphism  
$\sigma$ of $\cdc(\G)$ is expected 
if and only if for every $v \in \vertex(\G)$, there exists some $w \in \vertex(\G)$ such that  
$$
\sigma(\{(v, 0),(v, 1)\})=\{(w,0),(w,1)\}.
$$
\end{prop}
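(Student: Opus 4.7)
The plan is to split the statement into its two directions. The forward direction is essentially a check on the generators of the expected group $G$. Both families of permutations in \eqref{eq:expect1} and \eqref{eq:expect2} preserve the partition of $\vertex(\cdc(\G))$ into the fibers $\{(v,0),(v,1)\}$ over the vertices $v\in\vertex(\G)$: the swap in \eqref{eq:expect1} fixes every fiber setwise, and for each $\alpha\in\aut(\G)$ the permutation in \eqref{eq:expect2} sends the fiber over $v$ to the fiber over $\alpha(v)$. Hence every element of $G$, being a composition of these generators, maps fibers to fibers.

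For the non-trivial direction, I would assume $\sigma\in\aut(\cdc(\G))$ maps every fiber to a fiber, and define $\alpha\colon\vertex(\G)\to\vertex(\G)$ by $\sigma(\{(v,0),(v,1)\})=\{(\alpha(v),0),(\alpha(v),1)\}$. The first step is to show $\alpha\in\aut(\G)$. For any edge $\{u,v\}\in\edge(\G)$ the pair $\{(u,0),(v,1)\}$ is an edge of $\cdc(\G)$, so $\sigma$ sends it to an edge having one endpoint in the fiber over $\alpha(u)$ and the other in the fiber over $\alpha(v)$. Since edges of $\cdc(\G)$ lie only between fibers over adjacent vertices of $\G$, this forces $\alpha(u)\sim\alpha(v)$. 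Applying the same reasoning to $\sigma^{-1}$ gives $\alpha\in\aut(\G)$.

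The second step is to encode the action of $\sigma$ on each $2$-element fiber by a single bit: there exists a unique function $\epsilon\colon\vertex(\G)\to\Z_2$ with $\sigma(v,i)=(\alpha(v),i+\epsilon(v))$ for all $v$ and $i$. To finish I want $\epsilon$ to be constant. Take any edge $\{u,v\}\in\edge(\G)$; then $\sigma$ maps the edge $\{(u,0),(v,1)\}$ of $\cdc(\G)$ to $\{(\alpha(u),\epsilon(u)),(\alpha(v),1+\epsilon(v))\}$, which is an edge only if the two second coordinates differ, i.e., $\epsilon(u)=\epsilon(v)$. Thus $\epsilon$ is constant on every connected component of $\G$, and so globally constant by connectedness. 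If $\epsilon\equiv 0$ then $\sigma$ is the automorphism in \eqref{eq:expect2} attached to $\alpha$, and if $\epsilon\equiv 1$ then $\sigma$ is the composition of \eqref{eq:expect1} with that automorphism; in either case $\sigma\in G$.

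There is no real obstacle here: the whole argument is bookkeeping around the natural $2$-to-$1$ projection $\cdc(\G)\to\G$, and the crucial input is the one adjacency check that forces $\epsilon$ to be locally constant. It is worth noting that non-bipartiteness is not actually invoked by this sketch; mere connectedness of $\G$ suffices. The assumption in the statement presumably reflects the standing setting of the paper, where $\G$ is non-bipartite so that $\cdc(\G)$ is itself connected and the distinction between expected and unexpected automorphisms is the interesting one.
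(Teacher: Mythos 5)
Your proof is correct. The paper does not prove this proposition at all --- it is quoted from the reference [HM] (Hujdurovi\'c--Mitrovi\'c) --- but your argument is the standard one: fiber-preservation of the generators gives the easy direction, and in the converse the induced map $\alpha$ is an automorphism while the adjacency of $\{(\alpha(u),\epsilon(u)),(\alpha(v),1+\epsilon(v))\}$ forces $\epsilon(u)=\epsilon(v)$ across every edge, hence constancy by connectedness. Your closing observation is also accurate: only connectedness is used, and the non-bipartiteness hypothesis merely reflects the setting in which the proposition is applied (where $\cdc(\G)$ is connected and instability is non-trivial).
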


We review next  four results about Rose Window graphs obtained in 
\cite{DKM,KKM,W1}. The first one establishes some obvious isomorphisms.

\begin{prop}[{\rm \cite{W1}}]\label{iso}
A Rose Window graph $R_n(a,r)$ is isomorphic to each of the graphs: 
$R_n(-a,r), R_n(a,-r)$ and $R_n(-a,-r)$.
\end{prop}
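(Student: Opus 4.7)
The plan is to construct explicit graph isomorphisms. First I would note that flipping the sign of $r$ produces the \emph{same} edge set on the same vertex set: the collection $\{\{v_i,v_{i+r}\}:i\in\Z_n\}$ coincides with $\{\{v_i,v_{i-r}\}:i\in\Z_n\}$ after reindexing $j=i-r$, and the other three edge types do not involve $r$. Hence $R_n(a,r)=R_n(a,-r)$ literally as graphs, and similarly $R_n(-a,r)=R_n(-a,-r)$, so it suffices to exhibit an isomorphism $R_n(a,r)\cong R_n(-a,r)$.

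Next I would propose the map $\phi\colon\vertex(R_n(a,r))\to\vertex(R_n(-a,r))$ given by $\phi(u_i)=u_{-i}$, $\phi(v_i)=v_{-i}$ for all $i\in\Z_n$, which is visibly a bijection. Then I would verify that $\phi$ sends each of the four edge types of $R_n(a,r)$ to an edge of $R_n(-a,r)$: the rim edge $\{u_i,u_{i+1}\}$ goes to $\{u_{-i-1},u_{-i}\}$; the hub edge $\{v_i,v_{i+r}\}$ goes to $\{v_{-i-r},v_{-i}\}$, which upon setting $j=-i-r$ has the form $\{v_j,v_{j+r}\}$; the spoke $\{u_i,v_i\}$ goes to $\{u_{-i},v_{-i}\}$; and, crucially, the slanted spoke $\{u_{i+a},v_i\}$ goes to $\{u_{-i-a},v_{-i}\}=\{u_{j-a},v_j\}$ with $j=-i$, which is precisely a slanted spoke in $R_n(-a,r)$ since its defining parameter is $-a$. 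This confirms $R_n(a,r)\cong R_n(-a,r)$.

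Finally, combining the two observations gives the full chain of isomorphisms $R_n(a,r)\cong R_n(-a,r)=R_n(-a,-r)$ and $R_n(a,r)=R_n(a,-r)$, yielding all three claimed isomorphisms. I do not anticipate any obstacle here: the argument is entirely mechanical verification of adjacencies, and the only mild subtlety is recognizing that the sign change in $r$ leaves the edge set itself invariant, so that only one nontrivial isomorphism (the ``reflection'' $i\mapsto -i$) needs to be written down.
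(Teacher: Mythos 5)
Your proof is correct: the observation that negating $r$ leaves the edge set literally unchanged, together with the explicit reflection $u_i\mapsto u_{-i}$, $v_i\mapsto v_{-i}$ realizing $R_n(a,r)\cong R_n(-a,r)$, gives all three isomorphisms. The paper itself states this proposition without proof (citing Wilson), and your mechanical verification is exactly the standard argument one would supply.
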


As a corollary of Proposition~\ref{iso}, up to graph isomorphism, one may always assume that $1 \le a, r \le n/2$ for every graph $R_n(a,r)$. 
\medskip

It was pointed out in \cite{W1} that any Rose Window graph $\G$ admits 
at most $3$ edge-orbits under $\aut(\G)$.  The next two results provide information about the order of $\aut(\G)$ when the number of edge-orbits is $3$ or $2$. 

\begin{prop}[{\rm \cite[Corollary~3.5(a)]{DKM}}]\label{DKM1}
Suppose that $\G=R_n(a,r)$ admits three edge-orbits under $\aut(\G)$. 
If $2a \not\equiv 0\!\!\pmod n$, then $|\aut(\G)|=2n$.
\end{prop}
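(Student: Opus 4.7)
The plan is to exhibit a dihedral subgroup of $\aut(\G)$ of order $2n$ and then prove the matching upper bound. Consider the rotation $\rho\colon u_i\mapsto u_{i+1},\ v_i\mapsto v_{i+1}$ of order $n$ and the involution $\sigma\colon u_i\mapsto u_{-i},\ v_i\mapsto v_{-i-a}$. A direct edge-by-edge check shows $\sigma\in\aut(\G)$, and that $\sigma$ interchanges the two types of spokes: the short spoke $\{u_i,v_i\}$ is sent to the long spoke $\{u_{(-i-a)+a},v_{-i-a}\}$ and the long spoke $\{u_{i+a},v_i\}$ to the short spoke $\{u_{-i-a},v_{-i-a}\}$. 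Since $\sigma\notin\sg{\rho}$, the group $\sg{\rho,\sigma}$ has order $2n$, so $|\aut(\G)|\ge 2n$.

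For the reverse inequality the first task is to show that $\aut(\G)$ preserves the bipartition $U=\{u_i:i\in\Z_n\}$, $V=\{v_i:i\in\Z_n\}$. Because $\sigma$ fuses short and long spokes into a single orbit, the only way to obtain exactly three edge orbits is the natural splitting into the rim $\{u_iu_{i+1}\}$, the hub $\{v_iv_{i+r}\}$ and the full spoke set; any other fusion would leave strictly fewer than three orbits. In particular the rim orbit equals $\{u_iu_{i+1}:i\in\Z_n\}$, so any automorphism sending $u_0$ into $V$ would have to map $u_0u_1$ to a rim edge incident with some $v_j$, which is impossible. Since $\sg{\rho}$ acts transitively on $U$, one has $|\aut(\G)|=n\cdot|\aut(\G)_{u_0}|$, and the proof reduces to showing $|\aut(\G)_{u_0}|=2$.

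The stabiliser $\aut(\G)_{u_0}$ acts on $\G(u_0)=\{u_1,u_{-1},v_0,v_{-a}\}$, and preservation of the edge orbits forces this action to respect the rim/spoke splitting. Hence it embeds into $\sym(\{u_1,u_{-1}\})\times\sym(\{v_0,v_{-a}\})\cong\Z_2\times\Z_2$, and $\sigma$ realises the diagonal double swap. The main task is therefore to rule out each of the two single-swap elements, and this is where the hypothesis $2a\not\equiv 0\!\pmod n$ enters decisively. A single swap $u_1\leftrightarrow u_{-1}$ fixing $v_0$ and $v_{-a}$ would act on the outer $n$-cycle as the unique reflection fixing $u_0$ and exchanging $u_{\pm 1}$, namely $u_i\mapsto u_{-i}$, and in particular send $u_a$ to $u_{-a}$; yet the two spokes at the fixed vertex $v_0$ force $u_a$ to be fixed, giving $u_a=u_{-a}$. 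The symmetric case -- fixing $u_{\pm 1}$ and swapping $v_0\leftrightarrow v_{-a}$ -- uses that any automorphism of $\C_n$ fixing two adjacent vertices is the identity, so $u_a$ is again fixed, while analysing the pair of spokes at $v_{-a}$ forces the image of $u_a$ to be $u_{-a}$, once more yielding $u_a=u_{-a}$. I expect this ``outer-cycle rigidity versus spoke-pair rigidity'' dichotomy to be the delicate heart of the argument.

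Finally, the kernel of the action of $\aut(\G)_{u_0}$ on $\G(u_0)$ is trivial: any $g$ in the kernel fixes $u_0$ and $u_1$ and hence all of $U$ by cycle rigidity, and then for every $i$ the image $g(v_i)$ must lie in $\G(u_i)\cap\G(u_{i+a})\cap V=\{v_i,v_{i-a}\}\cap\{v_i,v_{i+a}\}$, which collapses to $\{v_i\}$ precisely because $2a\not\equiv 0\!\pmod n$. Combining these steps yields $|\aut(\G)_{u_0}|=2$ and therefore $|\aut(\G)|=2n$.
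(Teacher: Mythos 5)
Your proof is correct, and it is worth pointing out that the paper offers no argument of its own to compare it with: Proposition~\ref{DKM1} is imported verbatim from \cite[Corollary~3.5(a)]{DKM}, so what you have written is a self-contained verification of a cited result rather than an alternative to an internal proof. Every step checks out. The reflection $\sigma\colon u_i\mapsto u_{-i}$, $v_i\mapsto v_{-i-a}$ is indeed an automorphism interchanging short and long spokes, so $\sg{\rho}\cup\sigma\sg{\rho}$ already gives $|\aut(\G)|\ge 2n$; since $\rho$ is transitive on rim edges and on hub edges and $\sigma$ fuses the two spoke classes, the only edge-orbit partition with three classes is $\{\text{rim}\},\{\text{hub}\},\{\text{spokes}\}$, which forces $U=\{u_i: i\in\Z_n\}$ to be $\aut(\G)$-invariant and reduces everything to $|\aut(\G)_{u_0}|=2$. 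The stabilizer acts on $\G(u_0)=\{u_{\pm1},v_0,v_{-a}\}$ preserving the rim/spoke split, and your two exclusions are exactly right: in each single-swap case, rigidity of the rim cycle $\C_n$ pins down $g(u_a)$ one way while the pair of $U$-neighbours $\{u_0,u_a\}$ of $v_0$ (resp.\ $\{u_0,u_{-a}\}$ of $v_{-a}$) pins it down the other way, and the two conclusions are compatible only if $2a\equiv 0\pmod n$; the kernel computation $\{v_i,v_{i-a}\}\cap\{v_i,v_{i+a}\}=\{v_i\}$ likewise uses $2a\not\equiv 0\pmod n$ in exactly the right place. Two minor cosmetic remarks: for the lower bound you only need $|\sg{\rho,\sigma}|\ge 2n$, which follows from the coset decomposition without verifying the dihedral relation; and your argument is insensitive to the degenerate hub case $2r\equiv 0\pmod n$, since after the orbit analysis it never touches hub edges.
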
 

\begin{prop}[{\rm \cite[Corollary~3.9]{DKM}}]\label{DKM2}
Suppose that $\G=R_n(a,r)$ admits two edge-orbits under $\aut(\G)$. 
If $2a \not\equiv 0\!\!\pmod n$, then $|\aut(\G)|=4n$.
\end{prop}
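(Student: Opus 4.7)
The plan is to work with $A = \aut(\G)$ and analyse it through the standard subgroup $H = \sg{\rho, \mu} \le A$, where $\rho$ is the rotation $u_i \mapsto u_{i+1},\ v_i \mapsto v_{i+1}$ and $\mu$ is the reflection $u_i \mapsto u_{-i},\ v_i \mapsto v_{-i-a}$. One checks directly that $|H| = 2n$, that $H$ has vertex-orbits $U = \{u_i : i \in \Z_n\}$ and $V = \{v_i : i \in \Z_n\}$, and that $H$ has exactly three edge-orbits: the outer cycle, the inner cycle, and the spokes (the two spoke families being fused by $\mu$).

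First I would show that $A$ is vertex-transitive and determine its two edge-orbits. Since $H \le A$, every $A$-orbit on $\vertex(\G)$ is a union of $H$-orbits, so either $A$ is vertex-transitive or the sets $U$ and $V$ are themselves $A$-orbits. In the latter case every $A$-edge-orbit consists entirely of $U$-$U$, $V$-$V$, or $U$-$V$ edges, and since $H$ is already transitive on each of these three types, this would produce at least three $A$-edge-orbits, contradicting the hypothesis. Thus $A$ is vertex-transitive. Comparing the four edges incident with $u_0$ (two outer-cycle plus two spokes) to the four incident with $v_0$ (two inner-cycle plus two spokes), and noting that $u_0$ and $v_0$ lie in the same $A$-orbit, the only consistent merge of the three $H$-edge-orbits into two is $E_1 = \{\text{all cycle edges}\}$ and $E_2 = \{\text{all spokes}\}$.

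Next I would bound the stabilizer $A_{u_0}$. Preservation of $E_1$ and $E_2$ forces the induced action of $A_{u_0}$ on $\G(u_0) = \{u_{-1}, u_1, v_0, v_{-a}\}$ to respect the partition into cycle neighbours and spoke neighbours, and hence embeds $A_{u_0}$ into $\Z_2 \times \Z_2$. The crucial step is to exclude the two \emph{pure swap} elements. If $\alpha \in A_{u_0}$ swaps $u_{-1}$ with $u_1$ while fixing $v_0$ and $v_{-a}$, then edge-orbit preservation propagated along the outer cycle forces $\alpha(u_i) = u_{-i}$ for every $i \in \Z_n$, after which the image of the spoke $\{u_a, v_0\}$ must be a spoke at $\alpha(v_0)=v_0$, yielding $u_{-a} \in \{u_0, u_a\}$ and hence $2a \equiv 0 \pmod{n}$, a contradiction. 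The symmetric argument starting from $\beta \in A_{u_0}$ fixing $u_{\pm 1}$ and swapping $v_0$ with $v_{-a}$ propagates to $\beta(u_i) = u_i$ and likewise forces $2a \equiv 0 \pmod{n}$. Hence $|A_{u_0}| \le 2$, and since $\mu \in H_{u_0} \le A_{u_0}$ one has $|A_{u_0}| \ge 2$; combined with vertex-transitivity this gives $|A| = |\vertex(\G)| \cdot |A_{u_0}| = 4n$.

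I expect the pure-swap exclusion in the third paragraph to be the main obstacle, and it is precisely there that the hypothesis $2a \not\equiv 0 \pmod{n}$ enters in an essential way: when $2a \equiv 0 \pmod{n}$, pairs of $v$-vertices acquire identical spoke neighbourhoods, the propagations above no longer produce contradictions, and the stabilizer can genuinely grow beyond order $2$.
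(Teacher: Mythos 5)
First, a point of reference: the paper does not prove this statement at all --- it is quoted verbatim from \cite[Corollary~3.9]{DKM} as Proposition~\ref{DKM2}, so there is no in-paper argument to compare yours against. Judged on its own, your outline is sound and the main steps are right: the dichotomy forcing vertex-transitivity (otherwise the three edge types $U$--$U$, $V$--$V$, $U$--$V$ give at least three edge-orbits), the degree-count showing the two orbits must be $\{\text{rim}\cup\text{hub}\}$ and $\{\text{spokes}\}$, and the propagation arguments excluding the two pure swaps in $A_{u_0}$, each of which correctly extracts $2a\equiv 0\pmod n$ as the only escape. (One cosmetic point: counting ``two inner-cycle edges'' at $v_0$ presumes $2r\not\equiv 0\pmod n$; this is automatic here since otherwise $v_0$ has degree $3$ while $u_0$ has degree $4$, contradicting the vertex-transitivity you have just established, but it deserves a sentence.)

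The genuine gap is the word ``embeds'' in your third paragraph. Respecting the partition of $\G(u_0)$ only gives a homomorphism $A_{u_0}\to \Z_2\times\Z_2$; you never show it is injective, and without that, killing the two pure swaps bounds the image but not $|A_{u_0}|$. You must separately show that the kernel is trivial, i.e.\ that an automorphism $\gamma$ fixing $u_0$ and all four of $u_{\pm 1}, v_0, v_{-a}$ is the identity. This is not free: your own propagation gives $\gamma(u_i)=u_i$ for all $i$, and then $\gamma(v_j)$ must be a common spoke-neighbour of $u_j$ and $u_{j+a}$; the only such vertex is $v_j$ \emph{precisely because} $2a\not\equiv 0\pmod n$ (when $2a\equiv 0$, the vertices $v_j$ and $v_{j+a}$ share the spoke-neighbourhood $\{u_j,u_{j+a}\}$ and the kernel can genuinely be nontrivial). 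So the hypothesis enters in three places, not two, and this third use is needed to close the argument. With that one additional propagation step your proof is complete and gives $|A_{u_0}|=2$, hence $|A|=2n\cdot 2=4n$.
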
 

\begin{prop}[{\rm \cite[Lemma~2.1 \& Theorem~2.3]{DKM}}]\label{DKM3}
Suppose that $\G=R_n(a,r)$ is not edge-transitive. Then there are 
two edge-orbits under $\aut(\G)$ if and only if 
$$
r^2 \equiv 1\!\!\!\!\!\pmod n~\text{and}~ra \equiv  \pm a\!\!\!\!\!\pmod n.
$$ 
\end{prop}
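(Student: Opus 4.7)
The argument is organized around the three natural edge classes of $\G = R_n(a,r)$: the outer rim $O_1 = \{\{u_i, u_{i+1}\}\}$, the inner rim $O_2 = \{\{v_i, v_{i+r}\}\}$, and the set $O_3$ of spoke edges. The rotation $\rho\colon u_i \mapsto u_{i+1},\, v_i \mapsto v_{i+1}$ is an automorphism acting transitively on each $O_\ell$, and the reflection $\mu\colon u_i \mapsto u_{a-i},\, v_i \mapsto v_{-i}$ is an automorphism that fuses the two spoke types inside $O_3$. Hence each $O_\ell$ lies in a single $\aut(\G)$-orbit on edges, and ``exactly two edge orbits'' is equivalent to exactly one pair among $\{O_1,O_2,O_3\}$ merging.

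\textbf{Sufficiency.} Assuming $r^2 \equiv 1 \pmod n$ and $ra \equiv \epsilon a \pmod n$ with $\epsilon \in \{+1,-1\}$, I would exhibit explicitly an automorphism $\alpha$ fusing $O_1$ with $O_2$, namely
\[
\alpha(u_i)=v_{ri},\quad \alpha(v_i)=u_{ri+b}\quad \text{with } b=a \text{ if }\epsilon=+1,\ b=0 \text{ if }\epsilon=-1,
\]
and verify case by case that the identities $r\cdot r\equiv 1$ and $r\cdot a\equiv \epsilon a$ send each of the four edge types of $\G$ to an edge. Since $\alpha$ interchanges $O_1\leftrightarrow O_2$ and $\G$ is not edge-transitive, $\aut(\G)$ has exactly the two edge orbits $O_1\cup O_2$ and $O_3$.

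\textbf{Necessity.} Suppose $\G$ is not edge-transitive and has exactly two edge orbits. First, I would show $\G$ is vertex-transitive: otherwise $\{u_i\}$ and $\{v_i\}$ are distinct vertex orbits, and every edge orbit is contained in one of $O_1$, $O_2$, $O_3$ by endpoint types, producing three edge orbits and contradicting the hypothesis. Next, a valency-counting argument in the vertex-transitive setting shows that the fused pair must be $O_1,O_2$: each vertex meets $O_3$ in two edges, whereas a $v_i$ meets $O_1$ in zero edges and a $u_i$ in two, so $O_3$ cannot merge with either rim orbit.

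Hence there exists $\beta\in\aut(\G)$ sending an outer-rim edge to an inner-rim edge. After composing with a power of $\rho$ and possibly replacing $r$ by $-r$ (Proposition~\ref{iso}), I can normalize $\beta(u_0)=v_0$ and $\beta(u_1)=v_r$; induction along the outer cycle then yields $\beta(u_i)=v_{ri}$, and bijectivity forces $\gcd(n,r)=1$. Writing $\beta(v_\ell)=u_{c\ell+d}$, inner-rim preservation gives $cr\equiv\pm 1\pmod n$, and the two spoke relations, demanded to hold for every $i\in\Z_n$, translate into linear congruences in $i$ that force $c\equiv r\pmod n$ (so that $r^2\equiv\pm 1$) together with $ra\equiv\pm a\pmod n$ --- the alternative $ra\equiv 0$ being excluded because $\gcd(n,r)=1$ and $a\neq 0$. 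The main obstacle I foresee is the careful elimination of the sign $r^2\equiv -1$ in favor of the required $r^2\equiv 1$; this should follow from a finer tracking of the directions $\beta$ induces on the two rims, together with the constraint that $\beta$ be compatible with $\mu$ and with the two spoke types, where the $-1$ case should collapse either into an inconsistency for $\beta$ or into the existence of a further automorphism fusing $O_3$ with the rim, contradicting the non-edge-transitivity of $\G$.
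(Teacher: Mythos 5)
This proposition is quoted from \cite{DKM} and is not proved in the paper, so there is no internal proof to compare your attempt against; I can only assess it on its own terms. Your sufficiency direction is correct: under $r^2\equiv 1\pmod n$ and $ra\equiv\pm a\pmod n$ the map $\alpha(u_i)=v_{ri}$, $\alpha(v_i)=u_{ri+b}$ is an automorphism (note $r^2\equiv1\pmod n$ already gives $\gcd(r,n)=1$, so $\alpha$ is a bijection), and the reduction ``each of $O_1,O_2,O_3$ lies in a single orbit, so two orbits means exactly one merger,'' together with the local-valency count forcing the merger to be $O_1$ with $O_2$, is sound.

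The necessity direction has a genuine gap exactly at the point you flag. Writing $\beta(u_i)=v_{ri}$ and $\beta(v_\ell)=u_{c\ell+d}$ with $cr\equiv\pm1\pmod n$, the two spoke conditions say that $(c-r)i+d$ lies in $\{0,a\}\cap\{ra,ra+a\}$ for every $i$, i.e.\ the coset $d+(c-r)\Z_n$ is contained in a set of size at most $2$. This does \emph{not} force $c\equiv r$: it also allows $c-r\equiv n/2$ with $a=n/2$. Likewise, when $c=r$ one only gets $r^2\equiv\pm1$, and the unwanted sign cannot be eliminated in general, since $r^2\equiv-1$ together with $ra\equiv\pm a$ forces $2a\equiv0\pmod n$, and such graphs exist. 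Concretely, $R_{10}(5,3)$ admits the automorphism $u_i\mapsto v_{3i}$, $v_i\mapsto u_{3i+5}$, which swaps rim and hub edges; by Proposition~\ref{KKM1} this graph is not edge-transitive, so it has exactly two edge-orbits, yet $3^2\equiv-1\not\equiv1\pmod{10}$. Similarly $R_{16}(8,3)$ admits $u_i\mapsto v_{3i}$, $v_i\mapsto u_{11i}$, where $c=11\ne r$. So the statement as literally quoted cannot be proved; the cited result should be read with the hypothesis $2a\not\equiv0\pmod n$ that accompanies Propositions~\ref{DKM1} and~\ref{DKM2} (and which is satisfied in every application of Proposition~\ref{DKM3} in this paper). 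Under that extra hypothesis your argument does close: $a\ne n/2$ kills the case $c-r\equiv n/2$, hence $c=r$ and $r^2\equiv\pm1$, and the sign $-1$ is excluded because it would force $2a\equiv0\pmod n$.
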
 

The following four families of edge-transitive graphs were introduced by Wilson~\cite{W1}.

\begin{prop}[{\rm \cite[Theorem~1.2]{KKM}}]\label{KKM1}
Let $\G=R_n(a,r)$ be an edge-transitive graph and suppose that 
$1 \le a, r \le n/2$. Then one of the following holds. 
\begin{enumerate}[{\rm (a)}]
\item $\G=R_n(2,1)$,
\item $\G=R_{2m}(m-2,m-1)$,
\item $\G=R_{12m}(3m+2,3m-1)$ or $R_{12m}(3m-2,3m+1)$, 
\item $\G=R_{2m}(2b,r)$, $b^2 \equiv \pm 1\!\!\pmod m$, $r \in \{1,m-1\}$ and $r$ is odd. 
\end{enumerate}
Moreover, in family (d), $H \lhd \aut(\G)$, where $H$ is generated by 
the permutation 
$$
(u_0,u_2,\ldots,u_{2m-2})(u_1,u_3,\ldots,u_{2m-1})
(v_0,v_2,\ldots,v_{2m-2})(v_1,v_3,\ldots,v_{2m-1}).
$$
\end{prop}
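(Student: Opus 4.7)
The plan is to exploit that $\G = R_n(a,r)$ is a bi-circulant with respect to the cyclic subgroup generated by $\rho : u_i \mapsto u_{i+1}, v_i \mapsto v_{i+1}$, supplemented by the involution $\mu : u_i \mapsto u_{a-i}, v_i \mapsto v_{-i}$. First I would verify that $\rho$ and $\mu$ are always automorphisms and that $H = \sg{\rho,\mu}$ is dihedral of order $2n$. Under $H$ the edge set decomposes into at most three orbits: the outer-cycle edges $\{u_i,u_{i+1}\}$, the inner edges $\{v_i,v_{i+r}\}$, and the spokes $\{u_i,v_i\} \cup \{u_{i+a},v_i\}$ (the two spoke types being fused by $\mu$). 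Since edge-transitivity forces $\aut(\G) \supsetneq H$, I would look for an additional automorphism $\tau$ whose role is to merge these three orbits, and argue by a normaliser calculation that $\tau$ can be taken to normalise $\sg{\rho}$.

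Next I would split into cases according to how $\tau$ acts on the two $\rho$-orbits $\{u_i\}$ and $\{v_i\}$. If $\tau$ preserves them, then it has the affine form $u_i \mapsto u_{\alpha i+\beta}$, $v_i \mapsto v_{\alpha i+\gamma}$; edge-preservation forces $\alpha^2 \equiv 1 \pmod n$, and equating the $u$- and $v$-cycles collapses $E_u$ with $E_v$ only if tight relations between $a$ and $r$ hold, which I expect to carve out families~(a) and~(b). If instead $\tau$ swaps the two $\rho$-orbits, then $\tau$ carries the outer $n$-cycle onto the inner subgraph $\G[\{v_i : i \in \Z_n\}]$; this already forces $\gcd(n,r)=1$, and after an affine normalisation the edge-preservation equations become congruences of the form $r^2 \equiv \pm 1 \pmod n$ together with $ra \equiv \pm a \pmod n$, producing family~(d) with $a$ even and $r$ odd.

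The main obstacle will be carving out the sporadic family~(c) with $n=12m$, since it cannot arise from a $\tau$ that normalises $\sg{\rho}$. My plan there is to invoke Proposition~\ref{s-AT}: if $\G$ is not covered by families~(a), (b), (d), then the vertex stabiliser in $\aut(\G)$ must contain an element of order $3$, and Proposition~\ref{s-AT} then forces $\aut(\G)_v \cong A_4$ or $S_4$ with $\G$ being $(\aut(\G),2)$-transitive. Coset-graph reconstruction starting from such a $G_v$ together with the semi-regular cyclic subgroup $\sg{\rho}$, constrained by the explicit $4$-cycle structure through each vertex of $R_n(a,r)$, should force $n=12m$ and $(a,r) \equiv (\pm(3m+2),\mp(3m-1)) \pmod n$, giving family~(c). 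Finally, the "moreover" statement for family~(d) is a normaliser computation: once $|\aut(\G)|$ has been pinned down in that case, $\sg{\rho^2}$ is the unique index-$2$ subgroup of $\sg{\rho}$ and one verifies it is preserved by every generator of $\aut(\G)$ outside $\sg{\rho}$.
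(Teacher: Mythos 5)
This proposition is imported verbatim from \cite[Theorem~1.2]{KKM}; the paper under review gives no proof of it, so the only fair comparison is with the published classification, which occupies essentially an entire article. Your outline reproduces the easy half of that work but has a genuine gap at its centre, and the gap is one you half-acknowledge yourself. In the second paragraph you assert that ``by a normaliser calculation $\tau$ can be taken to normalise $\sg{\rho}$,'' and you derive families (a), (b) and (d) from the resulting affine form of $\tau$. But in the third paragraph you concede that family (c) cannot arise from such a $\tau$ --- which means $\sg{\rho}$ is \emph{not} always normal (nor even normalised by a suitable edge-orbit-fusing automorphism) in $\aut(\G)$, so the claimed normaliser calculation cannot exist in general. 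Without it, the dichotomy ``either some $\tau$ normalising $\sg{\rho}$ fuses the edge orbits, or else the vertex stabiliser contains an element of order $3$'' is unproved: a priori $\sg{\rho}$ could fail to be normal while $\aut(\G)_v$ remains a $2$-group (Proposition~\ref{s-AT} permits this when $s=1$), and this is exactly the regime in which family (b), and the hard exceptional cases, live. Establishing when the cyclic semiregular subgroup is normal is the technical heart of \cite{KKM} (via quotients, core-freeness bounds of the Lucchini/Herzog--Kaplan type, and a careful analysis of short cycles), not a routine preliminary.

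The other soft spot is the treatment of family (c): ``coset-graph reconstruction \dots should force $n=12m$'' is a statement of hope, not an argument. Pinning down the arithmetic $(a,r)=(3m\pm 2, 3m\mp 1)$ from an $A_4$ or $S_4$ stabiliser requires a concrete mechanism (in the literature this is done by analysing consistent cycles / girth cycles and the structure of $2$-arc-transitive tetravalent graphs with a large cyclic semiregular subgroup), and nothing in your sketch indicates how the modulus $12$ would emerge. The ``moreover'' clause for family (d) is fine in principle but depends on first knowing $\aut(\G)$ exactly in that family, which again rests on the unproved dichotomy. In short: the affine analysis for the normal case is sound and standard, but as written the proposal does not close; you would need either to prove the normality dichotomy or to cite it, at which point you may as well cite the classification itself, as the paper does.
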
 

A subgroup $H$ of a group $G$ is \emph{core-free} if it contains no 
non-trivial normal subgroup of $G$. 
The following result of Lucchini~\cite{L},  independently discovered by Herzog and Kaplan \cite{HK}, about cyclic core-free subgroups in 
finite groups will be needed in the proof of the main result of this paper.

\begin{prop}[{\rm \cite{HK,L}}]\label{L}
If $C$ is a core-free cyclic proper subgroup of a group $G$, then 
$|C|^2 < |G|$.
\end{prop}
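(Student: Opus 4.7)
The plan is to exploit the coset action of $G$ on $\Omega=G/C$. Because $C$ is core-free, this action is faithful, so $G$ embeds in $\sym(\Omega)$ with $C$ a point stabilizer, and the desired inequality $|C|^2<|G|$ is equivalent to $|\Omega|>|C|$. Fix a generator $c$ of $C$; then $c$ has order $n:=|C|$ as a permutation of $\Omega$, it fixes the trivial coset, and all of its orbit lengths on $\Omega$ are divisors of $n$ whose least common multiple equals $n$.

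A direct combinatorial attack disposes of the prime-power case: if $n=p^k$ then the lcm condition forces some orbit of $c$ to have length exactly $n$, so together with the fixed coset we obtain $|\Omega|\ge n+1$ and hence $|G|\ge n(n+1)>n^2$. The same argument works whenever $c$ admits at least one orbit of length $n$ on $\Omega$.

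The harder case is when $n$ has at least two distinct prime factors and every orbit length of $c$ is a proper divisor of $n$; for example when $n=6$ and the non-trivial orbits have lengths $2$ and $3$, the combinatorial lower bound is only $|\Omega|\ge 1+2+3=n$, which is not strictly greater than $n$. To cover this remaining case I would argue by induction on $|G|$ through a minimal counterexample, first reducing to the situation in which $G$ has a unique minimal normal subgroup $N$, and then treating separately the subcase in which $N$ is elementary abelian (so $G$ sits inside an affine group and the action of $C$ on $N$ is studied by elementary linear algebra) and the subcase in which $N$ is non-abelian. The \emph{main obstacle} is the non-abelian subcase, which requires either a delicate counting argument in the spirit of Herzog--Kaplan (e.g.\ estimating the number of cosets $gC$ with $g^{-1}Cg\cap C\ne 1$) or CFSG-based bounds on the order of a cyclic subgroup of an almost simple group (via Zsigmondy primes and Singer cycles); this is why the original proofs in \cite{HK,L} are non-trivial despite the clean statement of the conclusion.
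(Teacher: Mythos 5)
This proposition is not proved in the paper at all: it is imported verbatim from Lucchini \cite{L} and Herzog--Kaplan \cite{HK}, so there is no in-paper argument to compare yours against. Judged on its own terms, your attempt is a genuine partial proof rather than a proof. The reduction to the coset action on $\Omega=G/C$ is correct (faithfulness from core-freeness, and $|C|^2<|G|$ is indeed equivalent to $|\Omega|>|C|$), and your prime-power case is complete and clean: the order of the permutation induced by a generator $c$ is the lcm of its cycle lengths, so for $|C|=p^k$ some orbit has full length $|C|$, which together with the fixed coset $C$ gives $|\Omega|\ge |C|+1$.

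The gap is exactly where you flag it: when $|C|$ has at least two prime divisors, the orbit-length count can bottom out at $|\Omega|\ge |C|$ (your $n=6$ example with orbit lengths $1,2,3$), and everything after that point in your write-up is a plan, not an argument. The minimal-counterexample reduction, the elementary abelian subcase, and above all the non-abelian minimal normal subgroup subcase are announced but not carried out, and the last of these is precisely the substance of the cited theorems; naming it as ``the main obstacle'' does not discharge it. As a secondary remark, the route you sketch for the hard case is also not the one the sources take: Herzog and Kaplan avoid a case division over minimal normal subgroups and instead run a global counting argument exploiting the fact that a cyclic group has a unique subgroup of each order (so the intersections $C\cap C^g$ are tightly constrained), which keeps the proof elementary and CFSG-free. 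If you want to complete your proof, that counting argument is the missing ingredient to supply; as it stands, the proposal proves the statement only for $|C|$ a prime power.
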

%------------------------------------------------------------------------------------%
\section{On the canonical double covers of Rose Window graphs}
%------------------------------------------------------------------------------------%
\label{sec:cdcRW}
In this section we collect a couple of elementary properties of the 
graphs in the title. Throughout the section we let $\G=R_n(a,r)$. 
\medskip

The canonical double cover $\cdc(\G)$ has vertex set  
$$
\{u_{i,j}, v_{i,j} : i \in \Z_n, j \in \Z_2\}, 
$$
and edges
$$
\{u_{i,j},u_{i+1,j+1}\},~\{v_{i,j},v_{i+r,j+1}\},~\{u_{i,j},v_{i,j+1}\},~\{u_{i+a,j},v_{i,j+1}\},
$$
where $i \in \Z_n, j \in \Z_2$ and the additions in the subscripts are done in $\Z_n$ and $\Z_2$, respectively. 
%1
\subsection{The rim, the hub and the spoke cycles}
The subgraph of $\cdc(\G)$ induced by the edges 
$\{u_{i,j},u_{i+1,j+1}\}$ ($i \in \Z_n, j \in \Z_2$) is a cycle of length $2n$ if $n$ is odd, and two cycles of the same length $n$ if $n$ is even. These cycles will be referred to as 
the \emph{ rim cycles} of $\cdc(\G)$. 

If $2r \not\equiv 0\!\!\pmod n$, then the subgraph induced by the edges 
$\{v_{i,j},v_{i+r,j+1}\}$ ($i \in \Z_n, j \in \Z_2$) is a union of disjoint cycles of the same 
length and these will be referred to as the {\em hub cycles}. 

Finally, the subgraph induced by the remaining edges 
$\{u_{i,j},v_{i,j+1}\}, \{u_{i+a,j},v_{i,j+1}\}$ ($i \in \Z_n, j \in \Z_2$) is a 
union of cycles of the same length and these will be referred to as the 
{\em spoke cycles}. The next lemma is straightforward.

\begin{lem}
Let $\G=R_n(a,r)$ such that 
$2r \not\equiv 0\!\!\pmod n$, and let $\ell_h$ and $\ell_s$ be the length of the hub 
and the spoke cycles of $\cdc(\G)$, respectively. Then 
\begin{equation}\label{eq:ell}
\ell_h=\frac{2^\varepsilon n}{\gcd(r,n)}\quad \text{and} \quad
\ell_s=\frac{2 n}{\gcd(a,n)},
\end{equation}
where $\varepsilon=0$ if $n/\gcd(r,n)$ is even and 
$\varepsilon=1$ otherwise. 
\end{lem}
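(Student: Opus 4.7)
The plan is elementary: each of the two subgraphs in question is $2$-regular (here the hypothesis $2r\not\equiv 0\pmod n$ is used to prevent the hub subgraph from collapsing to a perfect matching), hence decomposes into disjoint cycles. By the obvious symmetries $u_{i,j}\mapsto u_{i+1,j}$, $v_{i,j}\mapsto v_{i+1,j}$ of $\cdc(\G)$, all hub cycles have the same length and all spoke cycles have the same length, so it suffices to trace one cycle from a convenient base vertex and count.

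For the spoke cycles, I would start at $u_{0,0}$ and alternate the two types of spoke edge, producing the walk
$$
u_{0,0},\,v_{0,1},\,u_{a,0},\,v_{a,1},\,u_{2a,0},\,v_{2a,1},\,\ldots,\,u_{ka,0},\,v_{ka,1},\,\ldots
$$
At each step the first subscript grows by $a$ on the $u$'s (respectively on the $v$'s), while the second subscript stays $0$ on the $u$'s and $1$ on the $v$'s. The walk returns to $u_{0,0}$ precisely when $ka\equiv 0\pmod n$, the first time being $k=n/\gcd(a,n)$; since each such period contributes two vertices to the cycle, this yields $\ell_s=2n/\gcd(a,n)$.

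For the hub cycles the counting is similar but requires a parity check. Starting at $v_{0,0}$ and repeatedly applying the edge $\{v_{i,j},v_{i+r,j+1}\}$, after $m$ steps one arrives at $v_{mr,\,m\bmod 2}$, so closure requires both $mr\equiv 0\pmod n$ \emph{and} $m$ even. The smallest positive $m$ satisfying $mr\equiv 0\pmod n$ is $k_0:=n/\gcd(r,n)$; if $k_0$ is even we can take $\ell_h=k_0$, while if $k_0$ is odd the first even multiple is $2k_0$, giving $\ell_h=2k_0$. Packaging both cases yields $\ell_h=2^\varepsilon n/\gcd(r,n)$ with $\varepsilon$ as in the statement. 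There is no real obstacle beyond careful bookkeeping of the $j$-coordinate; the only subtlety is the degenerate situation $2r\equiv 0\pmod n$, which is explicitly excluded.
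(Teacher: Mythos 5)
Your proof is correct and is exactly the routine verification the paper has in mind: the authors state the lemma without proof (``The next lemma is straightforward''), and tracing one spoke cycle and one hub cycle from a base vertex, using the shift symmetry to see that all cycles of each kind have equal length, is the intended argument. The parity bookkeeping for $\ell_h$ (closure needs $mr\equiv 0\pmod n$ \emph{and} $m$ even, whence the factor $2^{\varepsilon}$) is handled correctly, and you rightly identify $2r\not\equiv 0\pmod n$ as the hypothesis guaranteeing the hub subgraph is $2$-regular.
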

%2 
\subsection{The partition $\{S_i : 1 \le i \le 4\}$}
Suppose that $n$ is even. The following sets of vertices are well-defined:
\begin{equation}\label{eq:S12}
S_1=\{u_{i,j} : i \equiv j \!\!\!\!\pmod 2\},~
S_2=\{ u_{i,j} : i \not\equiv j \!\!\!\!\pmod 2\}, 
\end{equation}
\begin{equation}\label{eq:S34}
S_3=\{v_{i,j} : i \equiv j \!\!\!\!\pmod 2\},~ 
S_4=\{ v_{i,j} : i \not\equiv j \!\!\!\!\pmod 2\}.
\end{equation}
The above sets form a partition of $\vertex(\cdc(\G))$. 
%and this  will play an essential role in the proof of Theorem~\ref{main}. 
Note that the vertices visited by the two rim cycles of $\cdc(\G)$ form exactly the sets 
$S_1$ and $S_2$, respectively 
%3
\subsection{Connectivity}
\begin{lem}\label{bipartite}
The following conditions are equivalent for any Rose Window graph $\G=R_n(a,r)$.
\begin{enumerate}[{\rm (i)}]
\item $\G$ is bipartite. 
\item $\cdc(\G)$ is disconnected.
\item $n, a$ are even and $r$ is odd.
\end{enumerate}
\end{lem}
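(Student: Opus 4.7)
The plan is to prove (i) $\Leftrightarrow$ (ii) as a general property of canonical double covers, and then to establish (i) $\Leftrightarrow$ (iii) by constructing an explicit bipartition in one direction and exhibiting odd cycles in the other. The implication (i) $\Rightarrow$ (ii) is standard: if $V_1 \cup V_2$ is a bipartition of $\G$, the sets $(V_1 \times \{0\}) \cup (V_2 \times \{1\})$ and $(V_2 \times \{0\}) \cup (V_1 \times \{1\})$ are unions of connected components of $\cdc(\G)$ that are swapped by the flip \eqref{eq:expect1}. Conversely, if $\cdc(\G)$ is disconnected, then the flip interchanges its two components and pulling back either component defines a bipartition of $\G$.

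For (iii) $\Rightarrow$ (i), assume that $n, a$ are even and $r$ is odd. Since $n$ is even, parity is well-defined on $\Z_n$, so the sets
$V_1 = \{u_i : i \text{ even}\} \cup \{v_i : i \text{ odd}\}$ and
$V_2 = \{u_i : i \text{ odd}\} \cup \{v_i : i \text{ even}\}$ partition $\vertex(\G)$. Inspecting the four edge types in Definition~\ref{def:W}, each joins $V_1$ with $V_2$: the rim edges $\{u_i, u_{i+1}\}$ because consecutive integers have opposite parity; the hub edges $\{v_i, v_{i+r}\}$ because $r$ is odd; the spokes $\{u_i, v_i\}$ by construction of the partition; and the shifted spokes $\{u_{i+a}, v_i\}$ because $a$ is even, so $i$ and $i+a$ have the same parity.

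For (i) $\Rightarrow$ (iii) I would argue contrapositively, exhibiting an odd cycle whenever (iii) fails. If $n$ is odd, the rim $u_0 u_1 \cdots u_{n-1} u_0$ already has odd length. If $n$ is even but $a$ is odd, take the integer representative $1 \le a \le n-1$ and form the closed walk
$$u_0,\, v_0,\, u_a,\, u_{a-1},\, \ldots,\, u_1,\, u_0,$$
which uses the two spokes incident to $v_0$ and then traverses the rim backwards; its length is $a + 2$, which is odd. Finally, if $n$ and $a$ are even but $r$ is even, taking $1 \le r \le n-1$ I form
$$u_0,\, v_0,\, v_r,\, u_r,\, u_{r-1},\, \ldots,\, u_1,\, u_0,$$
exploiting the hub edge $\{v_0, v_r\}$ together with the spokes at $v_0$ and $v_r$; its length is $r + 3$, which is odd. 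The only real bookkeeping obstacle is verifying that these two closed walks are genuine cycles, which follows because the chosen integer representatives lie in $\{1, \ldots, n-1\}$ (so $0, 1, \ldots, a$ and $0, 1, \ldots, r$ are pairwise distinct in $\Z_n$) and the $u$-vertices are disjoint from the $v$-vertices.
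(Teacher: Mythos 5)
Your proof is correct, but it closes the equivalence by a different route than the paper. Both arguments use the same standard fact for (i) $\Rightarrow$ (ii) and essentially the same explicit bipartition for (iii) $\Rightarrow$ (i) (your $V_1,V_2$ coincide with the paper's $\{u_i,v_{i+1}: i \equiv 0 \pmod 2\}$ and its complement). The difference is in how the cycle of implications is completed: the paper proves (ii) $\Rightarrow$ (iii) by analysing the rim, hub and spoke cycles of $\cdc(\G)$ and the partition $S_1,\ldots,S_4$, whereas you stay inside $\G$ itself, proving (i) $\Rightarrow$ (iii) contrapositively via explicit odd closed walks and importing (ii) $\Rightarrow$ (i) from the general theory of canonical double covers. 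Your route is more elementary and self-contained (and note that an odd \emph{closed walk} already certifies non-bipartiteness, so your bookkeeping about the walks being genuine cycles is dispensable); the paper's route has the side benefit of exercising the rim/hub/spoke machinery that is reused heavily in later sections. One point you should make explicit: the implication ``$\cdc(\G)$ disconnected $\Rightarrow$ $\G$ bipartite'' is false for disconnected graphs (a disconnected non-bipartite $\G$ also has $\cdc(\G)$ disconnected), and your phrase ``the flip interchanges its two components'' tacitly assumes there are exactly two. Both issues are resolved by observing that every Rose Window graph is connected --- the rim edges link all the $u_i$ and each spoke $\{u_i,v_i\}$ attaches $v_i$ to the rim --- after which the standard dichotomy (for connected $\G$, $\cdc(\G)$ is connected if $\G$ is non-bipartite and has exactly two components otherwise) applies; this observation should be stated rather than left implicit.
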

\begin{proof}  
(i) $\Rightarrow$ (ii): This follows from the known fact that the canonical 
double cover of any bipartite graph is disconnected (see, e.g. \cite{W2}). 

(ii) $\Rightarrow$ (iii): The number $n$ must be even, for otherwise there 
is only one rim cycle and $\cdc(\G)$ is clearly connected. 
If $a$ were odd, then one can easily check that every spoke cycle would contain vertices 
from both rim cycles, so $\cdc(\G)$ would be connected. This shows that 
$a$ is even. Note that, if $X$ is a spoke cycle, then $\vertex(X) \subseteq S_1 \cup S_4$ 
or $\vertex(X) \subseteq S_2 \cup S_3$. 

Finally, if $r$ were even, then every hub cycle would contain vertices from both sets 
$S_3$ and $S_4$, and so $\cdc(\G)$ would be connected. This shows that 
$r$ is odd.  

(iii) $\Rightarrow$ (i): It is straightforward to check that $\G$ is bipartite with 
biparts $V_1$ and $V_2$ defined as 
$V_1=\{u_i, v_{i+1} : i \equiv 0 \!\!\pmod 2\}$ and 
$V_2=\{u_i, v_{i+1} : i \equiv 1 \!\!\pmod 2\}$.
\end{proof}
%4
\subsection{Some expected automorphisms}
Define the permutations $\rho, \mu$ and $\beta$ of $\vertex(\cdc(\G))$ 
as follows: 
\begin{eqnarray}
\rho &=& \prod_{j \in \Z_2}(u_{0,j},u_{1,j},\ldots,u_{n-1,j})
(v_{0,j},v_{1,j},\ldots,v_{n-1,j}). \label{eq:rho} \\
\mu &=& \prod_{i \in \Z_n, j \in \Z_2}(u_{i,j}, u_{-i,j})(v_{i,j}, 
v_{-i-a,j}) \label{eq:mu} \\ 
\beta &=& \prod_{i\in \Z_n}(u_{i,0},u_{i,1})(v_{i,0},v_{i,1}). \label{eq:beta}
\end{eqnarray}
It is easy to verify that all these permutations are 
expected automorphism of $\cdc(\G)$. Also, 
$\sg{\rho,\mu} \cong D_{2n}$ and $\beta$ commutes with both $\rho$ and 
$\mu$. Here $D_{2n}$ denotes the dihedral group of order $2n$. 
\medskip

The next result can be obtained directly from Proposition~\ref{HM}.

\begin{lem}\label{cor-HM}
Let $\G=R_n(a,r)$ be a connected and non-bipartite graph and let $\beta$ be 
the automorphism of $\cdc(\G)$ defined in \eqref{eq:beta}. An automorphism  
$\sigma$ of $\cdc(\G)$ is expected 
if and only if $\sigma \beta=\beta \sigma$.
\end{lem}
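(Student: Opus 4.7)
The plan is to deduce Lemma~\ref{cor-HM} from Proposition~\ref{HM} by observing that the condition appearing there is exactly the condition that $\sigma$ preserve the orbit partition of $\langle \beta \rangle$ acting on $\vertex(\cdc(\G))$.

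First, I would note that, directly from the definition \eqref{eq:beta}, for every $v \in \vertex(\G)$ the set $\{(v,0),(v,1)\}$ is precisely the orbit of $(v,0)$ under $\beta$, and these $n$ orbits form a partition of $\vertex(\cdc(\G))$. Hence Proposition~\ref{HM} may be restated as: an automorphism $\sigma$ of $\cdc(\G)$ is expected if and only if $\sigma$ permutes the orbits of $\langle \beta\rangle$; equivalently, $\sigma$ normalizes $\langle \beta \rangle$ inside $\aut(\cdc(\G))$.

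Next I would translate this normalizer condition into commutation with $\beta$. Since $\langle\beta\rangle$ has order $2$, any conjugate $\sigma\beta\sigma^{-1}$ either equals $\beta$ or it does not lie in $\langle\beta\rangle$ at all. Thus $\sigma$ normalizes $\langle \beta\rangle$ if and only if $\sigma\beta\sigma^{-1}=\beta$, i.e., $\sigma\beta=\beta\sigma$. This gives the equivalence directly.

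For the sake of completeness I would also indicate the elementary direct verification that commutation with $\beta$ implies preservation of the fibres: if $\sigma\beta=\beta\sigma$, then for every $v\in \vertex(\G)$ one has $\sigma(v,1)=\sigma\beta(v,0)=\beta\sigma(v,0)$, so writing $\sigma(v,0)=(w,i)$ forces $\sigma(v,1)=(w,1-i)$, whence $\sigma(\{(v,0),(v,1)\})=\{(w,0),(w,1)\}$; Proposition~\ref{HM} then gives that $\sigma$ is expected. The converse direction is even easier, since the expected automorphisms in \eqref{eq:expect1} and \eqref{eq:expect2} visibly commute with $\beta$, and this property is preserved under composition. There is no real obstacle here — the entire content of the lemma is the trivial but useful observation that $\beta$ is the unique involution whose orbit partition is the canonical fibration $\vertex(\cdc(\G))\to\vertex(\G)$.
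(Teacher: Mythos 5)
Your proof is correct and takes essentially the same route the paper intends: the paper gives no written proof of Lemma~\ref{cor-HM}, stating only that it ``can be obtained directly from Proposition~\ref{HM}'', and your identification of the fibres $\{(v,0),(v,1)\}$ with the $\langle\beta\rangle$-orbits, together with the observation that a fixed-point-free involution is determined by its orbit partition (plus the direct verification in your last paragraph), is exactly that derivation. Only a cosmetic slip: for $\G=R_n(a,r)$ there are $2n$ such fibres, not $n$.
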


\begin{lem}\label{pointwise}
Let $\G=R_n(a,r)$ be a graph such that $n$ is even and 
one of the following conditions hold.
\begin{enumerate}[{\rm (i)}]
\item $a, r$ are even, $2a \not\equiv 0\!\! \pmod n$ and 
$4r \not\equiv 0\!\! \pmod n$.  
\item $a$ is odd and $4a \not\equiv 0\!\! \pmod n$.
\item $a$ is odd, $4a \equiv 0\!\! \pmod n$, $2a \not\equiv 0\!\! \pmod n$ 
and $r$ is even. 
\end{enumerate}
Then the only automorphism of $\cdc(\G)$ fixing 
$S_1$ pointwise and $S_2$ setwise is the identity permutation, where 
$S_1$ and $S_2$ are the sets defined in \eqref{eq:S12}. 
\end{lem}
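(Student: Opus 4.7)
I would prove $\sigma = \mathrm{id}$ by analysing how $\sigma$ acts on the $u$-vertices and then propagating to the $v$-vertices. Since $\sigma$ fixes $S_1$ pointwise and preserves $S_2$, it permutes the $u$-vertices $S_1 \cup S_2$ and hence also $S_3 \cup S_4$, and it preserves the subgraph of rim edges; in particular $\sigma|_{S_2}$ is a level-preserving automorphism of the $n$-cycle on $S_2$. The central combinatorial fact, valid under the standing assumption $2a \not\equiv 0 \pmod n$ (implied by each of (i)--(iii)), is that any two $u$-vertices $u_{i,j}$ and $u_{i+a,j}$ share exactly one common $v$-neighbour, namely $v_{i, j+1}$; equivalently, every $v$-vertex is uniquely determined by its pair of $u$-neighbours.

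Case (i) is the easiest. Since $a$ is even, for every $u_{i,j} \in S_1$ also $u_{i+a,j} \in S_1$, so both are fixed by $\sigma$, and the propagation fact forces $\sigma$ to fix their common $v$-neighbour $v_{i, j+1} \in S_4$; varying $(i,j)$ this covers all of $S_4$. Since $r$ is even, hub edges run between $S_3$ and $S_4$, so every $v \in S_3$ has its two hub-neighbours in the now-fixed $S_4$, and two vertices of $S_3$ share the same hub-neighbour pair only if $4r \equiv 0 \pmod n$, which is ruled out. Hence $\sigma$ fixes $S_3$ pointwise, and a final application of the propagation fact to the $v$-neighbours of $S_2$-vertices (now lying in the fixed $S_3$) forces $\sigma$ to fix $S_2$ pointwise.

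In cases (ii) and (iii), $a$ is odd, so the two $v$-neighbours of any $u \in S_1$ split between $S_3$ and $S_4$ and the case (i) iteration fails. Instead, I would analyse $\sigma|_{S_2}$ directly in the level-preserving subgroup of $\aut(C_n)$, a dihedral group of order $n$. For a rotation $\sigma(u_{i,j}) = u_{i+2m, j}$ on $S_2$, requiring that $\sigma(v)$ be adjacent to the $\sigma$-images of $v$'s two $u$-neighbours yields, separately from $v \in S_3$ and from $v \in S_4$, the conditions $m \equiv 0$ or $m \equiv a \pmod{n/2}$, and $m \equiv 0$ or $m \equiv -a \pmod{n/2}$; their intersection is $m \equiv 0 \pmod{n/2}$ unless $4a \equiv 0 \pmod n$. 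A reflection $\sigma(u_{i,j}) = u_{c-i, j}$ imposes, in parallel, $c \in \{2i, 2i + 2a\} \pmod n$ for every $i$ in a parity class, which is essentially impossible; the few small exceptions surviving the $u$-$v$ edge check are eliminated by the same hub-edge argument used below in case (iii). This already finishes case (ii).

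For case (iii) the one surviving candidate is rotation by $2a$. A direct computation (using $3a \equiv -a \pmod n$, valid because $4a \equiv 0$) shows this rotation yields $\sigma(v_{k, l}) = v_{k+a, l}$ for $v \in S_3$ and $\sigma(v_{i, j+1}) = v_{i-a, j+1}$ for $v \in S_4$, swapping $S_3$ and $S_4$. Since $r$ is even, every hub edge runs between $S_3$ and $S_4$, and requiring $\sigma$ to preserve hub edges forces $r \equiv \pm a \pmod{n/2}$. However, the case (iii) hypotheses imply $n = 4d$ with $d$ odd, so $n/2 = 2d$ is even, whereas $r - a$ is odd (as $r$ is even and $a$ is odd); hence $n/2 \nmid r \pm a$ and the hub constraint fails. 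Thus $\sigma|_{S_2}$ is trivial in case (iii) too, and one last use of the propagation fact shows $\sigma$ fixes every $v$-vertex, giving $\sigma = \mathrm{id}$. The main obstacle is this case (iii) step: the $u$-$v$ edges alone tolerate the rotation-by-$2a$ symmetry, and ruling it out requires the hub edges together with the precise $2$-adic arithmetic built into the hypotheses.
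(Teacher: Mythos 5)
Your proposal is correct in substance but organizes the argument differently from the paper, and one step as you state it would not go through. The paper works with the \emph{spoke cycles}: in case (i) each spoke cycle of length $\ell_s>4$ has half its vertices in the pointwise-fixed $S_1$, in case (ii) each spoke cycle of length $\ell_s>8$ has a quarter of its vertices ($\ge 3$ of them) in $S_1$, so $\sigma$ is forced to be the identity on every spoke cycle at once --- case (ii) is a one-line fixed-point count there, with no discussion of rotations or reflections. Your route instead classifies $\sigma|_{S_2}$ inside the dihedral group of the rim cycle and extracts congruences on the rotation/reflection parameter from the requirement that images of the two $u$-neighbours of each $v$-vertex still have a common $v$-neighbour; this is logically equivalent (your ``propagation fact'' is exactly the paper's observation that $2a\not\equiv 0$ makes spoke cycles determine $v$-vertices from $u$-pairs), but longer in case (ii) and essentially identical to the paper in cases (i) and (iii). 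In case (iii) you and the paper exclude the same surviving map (your rotation by $2a$ on $S_2$ is the paper's simultaneous reflection of all spoke cycles, acting as $v_{i,j}\mapsto v_{i\pm a,j}$) by the same hub-edge parity contradiction.

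The soft spot is your treatment of reflections in case (ii). You propose to eliminate ``the few small exceptions'' by ``the same hub-edge argument used below in case (iii)'', but that argument relies on $r$ being even (so that hub edges run between $S_3$ and $S_4$), and case (ii) places \emph{no} hypothesis on $r$; when $r$ is odd the hub edges stay inside $S_3$ and inside $S_4$ and your cited argument is unavailable. Fortunately the appeal is unnecessary: for a level-preserving reflection $u_{k,l}\mapsto u_{c-k,l}$ the common-neighbour requirement forces the single constant $c$ to lie in $\{2i,\,2i+2a\}$ for \emph{every} $i\in\Z_n$, and since $\{2i : i\in\Z_n\}$ has $n/2$ elements this is impossible once $n>4$ --- which the hypotheses of case (ii) guarantee. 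So there are in fact no surviving exceptions and your proof closes, but you should replace the hand-waved appeal to the hub edges by this counting observation (or simply adopt the paper's fixed-point count on spoke cycles, which avoids the dihedral case analysis altogether).
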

\begin{proof}
Let $\sigma$ be an automorphism of $\cdc(\G)$ fixing $S_1$ pointwise and 
$S_2$ setwise. Note that $\sigma$ maps any rim/hub/spoke cycle to a rim/hub/spoke cycle.

(i): Let $X$ be any spoke cycle having a vertex 
from $S_1$. Since $a$ is even, it follows that 
$|\vertex(X) \cap S_1|=|\vertex(X) \cap S_4|=\ell_s/2$, where $S_4$ is the set 
defined in \eqref{eq:S34}. 
As $2a \not\equiv 0\!\! \pmod n$, $\ell_s > 4$ due to \eqref{eq:ell}, we 
see that $\sigma$ fixes every vertex in $S_4$ too. Now, as $r$ is even, for every hub cycle $X$, it holds that $|\vertex(X) \cap S_3|=|\vertex(X) \cap S_4|=\ell_h/2$, where $S_3$ is the set 
defined in \eqref{eq:S34}.   
As $4r \not\equiv 0\!\! \pmod n$, $\ell_h > 4$ due to \eqref{eq:ell}, and this yields that 
$\sigma$ fixes every vertex in $S_3$ too. It is easy to see that 
$\sigma$ is the identity permutation. 

(ii): In this case, $|X \cap S_1|=\ell_s/4$ for every spoke cycle $X$. 
Since $a$ is odd and $4a \not\equiv 0\!\! \pmod n$, it follows from \eqref{eq:ell} that $\ell_s > 8$, which shows that $\sigma$ is the identity permutation. 

(iii): Fix an arbitrary vertex $u_{i,j} \in S_2$ and let $X$ be the spoke cycle through 
$u_{i,j}$. Then 
$$
X=\big(\, u_{i,j},v_{i,j+1},u_{i+a,j},v_{i+a,j+1},u_{i+2a,j},v_{i+2a,j+1},u_{i+3a,j},
v_{i+3a,j+1}\, \big).
$$
Then $u_{i+a,j}, u_{i+3a,j} \in S_1$, hence $\sigma$ maps $X$ to itself, 
and either it fixes both $u_{i,j}$ and $u_{i+2a,j}$ or it swaps them. 
Using also that $u_{i,j}$ and $u_{i+2a,j}$ form an antipodal pair in the rim cycle 
consisting of the vertices in $S_2$, we conclude that 
$\sigma$ is either the identity permutation, or it acts on $X$ 
as the unique reflection fixing $u_{i+a,j}$ and $u_{i+3a,j}$ and this does not  
depend on the choice of the vertex $u_{i,j}$ from $S_2$.
We finish the proof by showing that the latter possibility leads to a contradiction. 

Indeed, then  for every $i \in \Z_n$ and $j \in \Z_2$, 
$$
\sigma(v_{i,j})=\begin{cases} v_{i+a,j} & \text{if}~i \equiv j\!\!\!\!\pmod 2 \\ 
v_{i-a,j} & \text{if}~i \not\equiv j\!\!\!\!\pmod 2. 
\end{cases}
$$
As $r$ is even and $v_{0,0} \sim v_{r,1}$,   
we get $v_{a,0}=\sigma(v_{0,0}) \sim \sigma(v_{r,1})=v_{r-a,1}$. This shows 
that $r-2a \equiv \pm r \!\! \pmod n$. Since $2a \not\equiv 0\!\! \pmod n$, it follows that $2r \equiv 2a\!\! \pmod n$. 
Using also that $a$ is odd and $4a \equiv 0 \!\!\pmod {n}$, we find that $4$ divides $n$, and hence  
$2r \equiv 2a\!\!\pmod 4$.
This contradicts the assumption that $a$ is odd and $r$ is even.
\end{proof}
%5
\subsection{Edge-transitive graphs}
The lemma below can be retrieved from the proof of \cite[Lemma~3.3]{KKM}.

\begin{lem}\label{not 3-AT}
Let $\G=R_n(a,r)$ be a non-bipartite graph such that $n> 4$ and 
$\cdc(\G)$ is edge-transitive. Then $\cdc(\G)$ is $s$-transitive   
for $s \le 2$.
\end{lem}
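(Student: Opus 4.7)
My plan is to argue by contradiction. Suppose $\cdc(\G)$ is $(G,s)$-arc-transitive for some $s \ge 3$ with $G=\aut(\cdc(\G))$. For every 2-arc $\alpha=(w_1,w_2,w_3)$ of the tetravalent graph $\cdc(\G)$, the stabilizer $G_\alpha$ must act transitively on the three neighbors of $w_3$ distinct from $w_2$, so $G_\alpha$ contains an element $\tau$ of order $3$; this already rules out the cases described in Proposition~\ref{s-AT}.

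I would specialize to the rim 2-arc $\alpha = (u_{-1,1},u_{0,0},u_{1,1})$. Since $\tau$ has order $3$ and fixes $u_{0,0}$ along with both of its rim-neighbors, its induced action on the remaining pair $\{v_{0,1},v_{-a,1}\}$ of spoke-neighbors of $u_{0,0}$ must be trivial (a cyclic group of order $3$ cannot act non-trivially on a $2$-set). Hence $\tau$ fixes $N(u_{0,0})$ pointwise while $3$-cycling the triple $\{u_{2,0},v_{1,0},v_{1-a,0}\}$ of neighbors of $u_{1,1}$ other than $u_{0,0}$.

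I would then propagate $\tau$'s action outward along the rim. From $\tau(u_{2,0}) \in \{v_{1,0}, v_{1-a,0}\}$ together with the known neighborhoods $N(v_{1,0})$ and $N(v_{1-a,0})$, one reads off where $\tau$ can send the remaining neighbors $\{u_{3,1}, v_{2,1}, v_{2-a,1}\}$ of $u_{2,0}$, and the same logic applied at $u_{-1,1}$ forces $\tau$ to $3$-cycle $\{u_{-2,0},v_{-1,0},v_{-1-a,0}\}$. Iterating one or two steps further, $\tau$ must preserve adjacency on the entire $2$-ball around $u_{0,0}$, which translates into explicit congruences among $n$, $a$, $r$ saying that the spoke-shift $a$ and hub-shift $r$ must interleave with the rim-step in a highly symmetric fashion. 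I would show that under $n>4$ and $\G$ non-bipartite (which by Lemma~\ref{bipartite} rules out the case $n,a$ even and $r$ odd), none of these congruences is satisfiable.

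The main obstacle is this last step: the arithmetic propagation splits according to the two choices for $\tau(u_{2,0})$, and each branch yields a different cascade of constraints on $(n,a,r)$ that must be shown inconsistent. Where the direct arithmetic is inconclusive, I would fall back on Proposition~\ref{KKM1} to reduce to one of the four families of edge-transitive Rose Window graphs and then treat each family by a direct structural check, using Lemma~\ref{cor-HM} to decide whether the putative $\tau$ is even an expected automorphism, and invoking Proposition~\ref{L} whenever a suitable core-free cyclic subgroup of $G$ is available to bound $|G|$ from above. The condition $n>4$ is essential to exclude small canonical double covers (such as $\cdc(R_3(1,1))$ and the tiny $\cdc(R_4(a,r))$'s) where degeneracy can produce higher $s$-arc-transitivity by accident.
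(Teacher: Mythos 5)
First, note that the paper gives no self-contained proof of this lemma: it is imported wholesale from the proof of \cite[Lemma~3.3]{KKM}, so your attempt has to stand on its own. Your opening is reasonable and in the spirit of a standard local analysis: assuming $(G,3)$-arc-transitivity, the stabilizer of the rim $2$-arc $(u_{-1,1},u_{0,0},u_{1,1})$ must act transitively on the three forward neighbours $\{u_{2,0},v_{1,0},v_{1-a,0}\}$ of $u_{1,1}$. But already here there is a soft spot: you assert the existence of $\tau$ of order $3$, whereas the stabilizer only visibly contains an element whose \emph{image} on that $3$-set is a $3$-cycle, and all of your subsequent parity arguments (trivial action on the $2$-set $\{v_{0,1},v_{-a,1}\}$, forced $3$-cycling of other invariant $3$-sets) lean on $\tau$ having $3$-power order. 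This is repairable by working inside a Sylow $3$-subgroup of the $2$-arc stabilizer, but it is not done. Likewise, nothing forces $\tau$ to act non-trivially on the backward triple $\{u_{-2,0},v_{-1,0},v_{-1-a,0}\}$: it may fix it pointwise, in which case that branch of your propagation yields no constraint at all.

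The more serious problems are in the endgame. The entire content of the lemma is that the cascading congruences on $(n,a,r)$ are unsatisfiable for $n>4$ and $\G$ non-bipartite, and you never derive them, let alone refute them; every step that could actually fail is deferred to ``I would show''. Moreover, your declared fallback is not available: Proposition~\ref{KKM1} classifies edge-transitive Rose Window graphs $R_n(a,r)$, whereas the hypothesis here is only that $\cdc(\G)$ is edge-transitive --- which does not imply that $\G$ is (the split between cases (ii) and (iii) of Theorem~\ref{main} rests precisely on this distinction), and $\cdc(\G)$ is itself a Rose Window graph only when $n$ is odd (Section~\ref{sec:proof1}). The appeals to Lemma~\ref{cor-HM} and Proposition~\ref{L} are similarly unanchored: whether $\tau$ is an expected automorphism has no bearing on $s$-arc-transitivity, and no core-free cyclic subgroup is actually exhibited. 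Finally, the conclusion of the lemma also asserts that $\cdc(\G)$ \emph{is} $s$-transitive for some $s$, i.e.\ arc-transitive, which your argument does not address. As it stands this is a plan rather than a proof.
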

%6
\subsection{Quotient graphs}
Let $\Sigma$ be an arbitrary graph and $\pi=\{C_1,\ldots,C_k\}$ be a partition of 
$\vertex(\Sigma)$.  The {\em quotient graph} $\Sigma/\pi$ is defined to have the 
vertex set $\pi$ and edges 
$\{ C_i, C_j\}$, where $C_i \ne C_j$ and there is an edge $\{u,v\} \in \edge(\Sigma)$ with 
$u \in C_i$ and $v \in C_j$. 
In the case when $\pi$ is formed by the orbits under a subgroup 
$H \le \aut(\Sigma)$, we also write $\Sigma/H$ for $\Sigma/\pi$. 
\medskip

For a divisor $k$ of $n$, let $\phi_{n,k}: \Z_n \to \Z_k$ be the group 
homorphism for which $\phi_{n,k}(1)=1$ (here the integer $1$ represents  
an element in $\Z_n$ in the left side, whereas it represents an element in $\Z_k$ in the right side).
Furthermore, let $\phi^*_{n,k}$ be the mapping from 
$\{u_{i,j}, v_{i,j} : i \in \Z_n, j \in \Z_2 \}$ to $\{u_{i,j}, v_{i,j} : i \in \Z_k, j \in \Z_2 \}$ defined  
as follows:  
\begin{equation}\label{eq:phi*}
\phi^*_{n,k}(u_{i,j})=u_{\phi_{n,k}(i),j}~\text{and}~
\phi^*_{n,k}(v_{i,j})=v_{\phi_{n,k}(i),j},~i \in \Z_n,~j \in \Z_2.
\end{equation}
The following lemma can be easily derived after one applies well-known classical 
results about quotients of arc-transitive graphs (see, e.g., \cite{L}).

\begin{lem}\label{cyclic block}
Let $\G=R_n(a,r)$, $G=\aut(\cdc(\G))$, and let $\rho \in G$ be the automorphism defined in \eqref{eq:rho}. Suppose that $\cdc(\G)$ is connected, edge-transitive, 
and that $\orb(H,\vertex(\cdc(\G)))$ is a block system for $G$ for some subgroup 
$H < \sg{\rho}$, $H \ne 1$. 
\begin{enumerate}[{\rm (i)}] 
\item If $|H|=n/2$, then $G_{u_{0,0}}$ is a $2$-group and 
$\cdc(\G)/H  \cong C_8$. 
\item If $|H| < n/2$, then $H \lhd G$ and 
$$
\cdc(\G)/H \cong \cdc(R_k(a',r')), 
$$
where $k=n/|H|$, $a'=\phi_{n,k}(a)$ and $r'=\phi_{n,k}(r)$. 
\item With the notation in (ii), the 
mapping $\phi^*_{n,k}$ defined in \eqref{eq:phi*} is a homomorphism from 
$\cdc(\G)$ onto $\cdc(R_k(a',r'))$. 
\item Letting $\Sigma=\cdc(R_k(a',r'))$ in (ii), there is an epimorphism 
$$
\varphi : G/H \to \aut(\Sigma)
$$ 
such that $\varphi(\rho)$ is the automorphism of $\Sigma$ defined in 
\eqref{eq:rho}. Furthermore,  $\Sigma$ is edge-transitive. 
\end{enumerate}
\end{lem}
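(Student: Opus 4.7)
The plan is to combine a direct verification that $\phi^*_{n,k}$ realises the natural quotient map with the standard theory of quotients of edge-transitive graphs under normal subgroups. Writing $H=\sg{\rho^{k}}$ with $k=n/|H|$, the semiregularity of $\sg{\rho}$ passes to $H$, whose $4k$ orbits are the sets $\{u_{i+tk,j}:t=0,\ldots,|H|-1\}$ and $\{v_{i+tk,j}:t=0,\ldots,|H|-1\}$ indexed by $(i\bmod k,j)\in\Z_k\times\Z_2$; these are in natural bijection with $\vertex(\cdc(R_k(a',r')))$ via $\phi^*_{n,k}$.

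For (iii), I would run through the four edge-families of $\cdc(\G)$ -- rim, hub, and the two spoke types -- and verify that reducing subscripts modulo $k$ sends each onto the corresponding edge-family of $\cdc(R_k(a',r'))$; the resulting map is plainly onto. Part (ii) then combines (iii) with the immediate reverse inclusion, after one rules out $a'=0$ or $r'=0$: otherwise the quotient would have vertices of valency strictly less than $4$, contradicting that a block-system quotient of an edge-transitive graph is regular of the same valency.

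The crux of (ii) is the normality $H\lhd G$. Let $K$ be the kernel of the $G$-action on $\orb(H,\vertex(\cdc(\G)))$; then $K\lhd G$, $H\le K$, and since $H$ acts regularly on each block, $K=H\cdot K_v$ for any vertex $v$, so it suffices to show $K_v=1$. The assumption $|H|<n/2$ gives $k\ge 3$, so $\Sigma=\cdc(\G)/H$ has at least $12$ vertices and the induced $G/K$-action on $\Sigma$ is edge-transitive; this places us in the regime of the classical quotient theory (cf.\ \cite{L}). A standard argument -- an element of $K_v$ fixes $v$ and every block setwise, hence stabilises the quotient-edges at $[v]$, which pins down its action on the four neighbors of $v$ in $\cdc(\G)$, and connectivity forces triviality -- yields $K_v=1$. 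Once $K=H$, (iv) is routine: the faithful edge-transitive action of $G/H$ on $\Sigma$ furnishes the epimorphism $\varphi$, and $\varphi(\rho)$ is read off from the orbit labeling.

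Case (i), $|H|=n/2$, is handled separately because $k=2$ makes $\Sigma$ have only $8$ vertices and the normality argument above no longer applies. Here I would enumerate adjacencies in $\Sigma$ directly via the four parity classes $(i\bmod 2,j)$ on each of the $u$- and $v$-sides; a direct computation shows that in each admissible case the collapsing of multiple edges yields $\Sigma\cong C_8$. Since $\aut(C_8)=D_{16}$ is a $2$-group, the induced action embeds $G/K$ into a $2$-group, so it remains to see that $K_{u_{0,0}}$ is a $2$-group; this follows by combining Proposition~\ref{s-AT} and Lemma~\ref{not 3-AT} (which restrict the vertex stabiliser in tetravalent edge-transitive graphs) with an order count that rules out the $A_4$ and $S_4$ possibilities. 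I expect the main obstacle to be the justification of $K_v=1$ in part (ii); the remainder is index bookkeeping and direct adjacency checks.
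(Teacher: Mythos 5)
The paper dispatches this lemma with a bare reference to classical quotient theory, and your overall route --- identify the $H$-orbits, check $\phi^*_{n,k}$ on each edge family, prove $K=HK_v$ and $K_v=1$ to obtain $H\lhd G$, and treat $k=2$ separately --- is exactly the standard argument being alluded to; in particular your connectivity argument for $K_v=1$ is the right crux and is correct once the local structure is in place. However, one supporting claim you lean on is false, and it is needed in several places: a block-system quotient of an edge-transitive graph is \emph{not} in general regular of the same valency --- part (i) of this very lemma is a counterexample, since $\cdc(\G)$ is tetravalent while $\cdc(\G)/H\cong \C_8$ has valency $2$. Consequently your exclusion of $a'=0$ and $r'=0$ does not close: if $k\mid a$ and $k\nmid 2r$, the quotient is a $3$-regular, $G$-edge-transitive graph, so neither regularity nor edge-transitivity of the quotient is violated. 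The same gap resurfaces in part (i), where you must also exclude $a$ odd (otherwise the eight blocks carry a cubic graph, not a cycle), and it silently underlies the $K_v=1$ step, which needs each block adjacent to the block of $v$ to contain exactly one neighbour of $v$. The tool you are missing is: since $G$ is edge-transitive and the orbits form a $G$-block system, any $g\in G$ carrying an edge between blocks $B_1,B_2$ to an edge between $B_3,B_4$ carries $\cdc(\G)[B_1\cup B_2]$ onto $\cdc(\G)[B_3\cup B_4]$, so all induced bipartite graphs between adjacent blocks are pairwise isomorphic. Rim edges always induce perfect matchings between blocks, whereas $k\mid a$ or $k\mid 2r$ (or, in case (i), $a$ odd) would force some spoke or hub between-block graph to be $2$-regular; this gives the needed contradiction and also delivers the perfect-matching structure your connectivity argument uses.

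Two smaller points. In (iv), a faithful action yields a \emph{monomorphism} $G/H\to\aut(\Sigma)$, not an epimorphism; surjectivity is a genuinely different assertion (every automorphism of $\Sigma$ would have to lift through the regular $H$-cover) and does not follow from anything you wrote, so your justification proves the wrong containment (an injective homomorphism sending $\rho H$ to the rotation of $\Sigma$ is what your argument actually produces, and is what the paper's later applications use). In part (i), Proposition~\ref{s-AT} requires arc-transitivity rather than mere edge-transitivity, which Lemma~\ref{not 3-AT} supplies for $n>4$; given that, the clean way to exclude $G_v\cong A_4$ or $S_4$ is that $G_v$ preserves the partition of the four neighbours of $v$ into the two blocks containing them, hence cannot act $2$-transitively on them.
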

%------------------------------------------------------------------------------------%
\section{Proof of case (i) of Theorem~\ref{main}}\label{sec:proof1}
%------------------------------------------------------------------------------------%
Let $\G=R_n(a,r)$ be a non-trivially unstable Rose Window graph and assume that $n$ is odd. We show below that $\G$ is isomorphic to a graph in W3. 

The key idea is to represent $\cdc(\G)$ as a Rose Window graph.
Let $\theta : \Z_n \times \Z_2 \to \Z_{2n}$ be the unique isomorphism mapping 
$(1,1)$ to $1$. Define the mapping $f : \vertex(\cdc(\G)) \to 
\{u_i,v_i : i \in \Z_{2n}\}$ as follows:  
$$
f(u_{i,j})=u_{\theta((i,j))}~\text{and}~f(v_{i,j})=v_{\theta((i,j))+n},~i \in \Z_n,~j \in \Z_2.
$$

A straightforward check shows that $f$ is an isomorphism from $\cdc(\G)$ to 
$R_{2n}(a',r')$, where 
$a'=\theta((a,0))$ and $r'=\theta((r,1))$. Equivalently,  
\begin{equation}\label{eq:a'r'}
a'=\begin{cases} a & \text{if $a$ is even}, \\ a+n & \text{if $a$ is odd}, \end{cases}\quad
r'=\begin{cases} r+n & \text{if $r$ is even}, \\ r & \text{if $r$ is odd}. \end{cases}
\end{equation}
Note that $a'$ is always even and $r'$ is always odd. 

Let $G=\aut((R_{2n}(a',r')))$ and denote by $\ell$ the number of edge-orbits 
under $G$. Then $\ell \in \{1,2,3\}$. 

Assume first that $\ell=3$. Since $a'$ is even, $a' \ne n$. Thus 
$|G|=4n$ due to Proposition~\ref{DKM1}. This contradicts the assumption 
that $R_n(a,r)$ is unstable. 

Now assume that $\ell=2$.
As $a' \ne n$,  $|G|=8n$ due to Proposition~\ref{DKM2}. Furthermore, by 
Proposition~\ref{DKM3}, $(r')^2 \equiv 1\!\!\pmod {2n}$ and $r'a' \equiv \pm a'\!\!\pmod {2n}$. Since $r' \equiv r\!\!\pmod n$ and $a' \equiv a\!\!\pmod n$, it follows that 
$r^2 \equiv 1\!\!\pmod n$ and $ra \equiv \pm a\!\! \pmod n$.
If $R_n(a,r)$ were edge-transitive, then $R_{2n}(a',r')$ would be edge-transitive as well, 
which is not the case. After applying Propositions~\ref{DKM2} and \ref{DKM3} to 
$R_n(a,r)$, we obtain that $|\aut(R_n(a,r))|=4n$. 
This contradicts the assumption that $R_n(a,r)$ is unstable.

Thus $\ell=1$, i.e., $R_{2n}(a',r')$ is edge-transitive.   
Let 
$$
a''=\min(a',2n-a')~\text{and}~r''=\min(r',2n-r'),
$$ 
and put $\Sigma=R_{2n}(a'',r'')$. By Proposition~\ref{iso}, 
$\Sigma \cong R_{2n}(a',r')$, in particular, $\Sigma$ is edge-transitive.
We obtain that $\Sigma$ belongs to one of the families (a)--(d) in Proposition~\ref{KKM1}.

The graph $\Sigma$ is not in the family (a). For otherwise, 
$(a'',r'')=(2,1)$. If $a$ is even, then $a'=a < n$, hence $a''=a'=a$, so $a=2$; if 
$a$ is odd, then $a'=a+n$, hence $a''=2n-a'=n-a$, and so $a=n-2$. A similar argument 
yields that $r=1$ or $n-1$. 
If $a=2$, then the vertices $u_0$ and $v_{-1}$ have the same neighborhood; and if $a=n-2$, then $u_0$ and $v_1$ share the same neighborhood. 
In either case, $\G=R_n(a,r)$ has two vertices having the same neighborhood, which is impossible. 

The graph $\Sigma$ is not in the family (b). For otherwise, $r''=n-1$, which is impossible as $r''$ is odd. Also, $\Sigma$ is neither in the family (c) because $n$ is odd.

Thus $\Sigma=R_{2n}(a'',r'')$ is in the family (d), i.e., 
$a''=2d$ such that $d^2 \equiv \pm 1\!\!\pmod n$,  $2 \le  a'' \le n$ and
$r'' \in \{1,n-1\}$ is odd. It follows that $r''=1$. We obtain 
$a=2d$ or $n-2d$ and $r=1$ or $n-1$.
Finally, $d \not\equiv \pm 1\!\!\pmod n$, for otherwise, $R_n(a,r)$ has two vertices 
with the same neighborhood, and $R_n(a,r)$ is indeed isomorphic to a graph in W3.
%------------------------------------------------------------------------------------%
\section{Proof of case (ii) of Theorem~\ref{main}}\label{sec:proof2}
%------------------------------------------------------------------------------------%
For this section we set the following assumptions. 

\begin{hypo}\label{hypo1}
$\G=R_n(a,r)$ is a non-trivially unstable graph such that   
$n=2m$, $1 \le a, r \le m$ and $\cdc(\G)$ is not edge-transitive.  
Furthermore, 
\begin{quote}
$G=\aut(\cdc(\G))$, \\ [+1ex] 
$C=\sg{\rho}$, where $\rho$ is defined in \eqref{eq:rho}, \\ [+1ex]
$\sigma$ is an unexpected automorphism of $\cdc(\G)$. 
\end{quote}
\end{hypo}

To settle case (ii) of Theorem~\ref{main}, we have to show that under the above assumptions $\G$ is isomorphic to a graph 
in one of W1 and W4--W9. This will be done in three lemmas.

\begin{lem}\label{L1}
Assuming Hypothesis~\ref{hypo1}, suppose that 
$\sigma$ maps $S_1 \cup S_2$ to itself. Then 
$\G$ is isomorphic to a graph in one of W1, W5 and W6. 
\end{lem}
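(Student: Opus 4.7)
The plan is to exploit the hypothesis $\sigma(S_1\cup S_2)=S_1\cup S_2$ and reduce to the setting of Lemma~\ref{pointwise}. First, I observe that the subgraph of $\cdc(\G)$ induced by $S_1\cup S_2$ is exactly the disjoint union of the two rim cycles (each a $C_n$), so $\sigma$ either fixes $\{S_1,S_2\}$ setwise individually or swaps the two parts. On $S_1\cup S_2$, the expected automorphisms $\rho^2$, $\mu$ and $\beta\rho$ generate the full dihedral group on each rim cycle, while $\rho$ and $\beta$ interchange the two cycles; in particular this action is transitive on ordered edges of the pair of rim cycles. Since the expected automorphisms form a subgroup, composing $\sigma$ with a suitable expected element produces an unexpected automorphism which I may, without loss of generality, assume fixes $S_1$ pointwise and preserves $S_2$ setwise.

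Next, I invoke Lemma~\ref{pointwise}: if any of its conditions (i)--(iii) held, this reduced $\sigma$ would be the identity, hence expected, a contradiction. Thus all three conditions fail, and I proceed by a parity analysis of $(a,r)$. The subcase $a$ even, $r$ odd is ruled out because it would force $\G$ bipartite by Lemma~\ref{bipartite}. For $a,r$ both even, failure of (i) gives $2a\equiv 0\pmod n$ or $4r\equiv 0\pmod n$: the former yields $a=n/2$ with $n\equiv 0\pmod 4$, placing $\G$ in W1, and the latter (combined with $r$ even) yields $r=n/4$ with $n\equiv 0\pmod 8$, placing $\G$ in W5. For $a$ odd, failure of (ii) forces $4a\equiv 0\pmod n$; if moreover $2a\not\equiv 0\pmod n$, then failure of (iii) forces $r$ odd, so $a=n/4$ with $a$ odd and $\G\in$ W6.

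The main obstacle is the residual subcase $a$ odd with $2a\equiv 0\pmod n$, i.e.\ $a=m=n/2$ with $m$ odd: then $n\equiv 2\pmod 4$, so $\G$ fits none of W1, W5, W6, and Lemma~\ref{pointwise} is simply inapplicable. I handle this directly. Here $\ell_s=2n/\gcd(a,n)=4$, so every spoke cycle of $\cdc(\G)$ is a $4$-cycle of the form $\bigl(u_{i,j},v_{i,j+1},u_{i+m,j},v_{i+m,j+1}\bigr)$. The reduced $\sigma$ fixes $u_{0,0}$ and preserves its spoke $4$-cycle, and since $u_{m,0}$ lies opposite $u_{0,0}$ on that $4$-cycle it must also be fixed; running the argument over all spoke $4$-cycles shows that $\sigma$ in fact fixes every vertex of $S_2$. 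Examining how $\sigma$ can act on each spoke $4$-cycle and using the hub edges to enforce global consistency, I conclude that $\sigma$ is either the identity or the permutation fixing every $u_{i,j}$ and swapping $v_{i,j}\leftrightarrow v_{i+m,j}$ for all $(i,j)$. The latter preserves every fibre $\{(v,0),(v,1)\}$, so by Proposition~\ref{HM} it is expected; either way $\sigma$ is expected, contradicting the choice of $\sigma$. Hence this residual subcase cannot occur, and the proof is complete.
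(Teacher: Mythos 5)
Your overall strategy is the paper's: reduce to an unexpected $\sigma$ fixing $S_1$ pointwise and $S_2$ setwise, then play the parity of $(a,r)$ against Lemma~\ref{pointwise}. But there is a genuine gap in the case $a,r$ both even. The negation of condition (i) of Lemma~\ref{pointwise} gives $2a\equiv 0\pmod n$ or $4r\equiv 0\pmod n$, and in the second branch, with $1\le r\le m$, you get $r=n/4$ \emph{or} $r=n/2=m$; you silently discard the second possibility. The graph $R_{2m}(a,m)$ with $a$ even, $a\ne m$, $m$ even is not bipartite and lies in none of W1, W5, W6, so you must show this case cannot occur, i.e.\ that the reduced $\sigma$ is forced to be the identity. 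Lemma~\ref{pointwise} does not cover it (its condition (i) explicitly requires $4r\not\equiv 0\pmod n$), and this is exactly where the paper spends a full paragraph: when $r=m$ the hub edges $\{v_{i,j},v_{i+m,j+1}\}$ degenerate into a perfect matching, each spoke cycle (of length $\ell_s\ge 6$ since $a\ne m$) meets $S_1$ or $S_2$ but not both, $\sigma$ fixes pointwise every spoke cycle meeting $S_1$, and the matching transports this to the spoke cycles meeting $S_2$, forcing $\sigma$ to be the identity. Without some such argument the lemma is not proved.

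Two smaller points. First, in the residual subcase $a=m$ odd, the permutation fixing every $u_{i,j}$ and swapping $v_{i,j}\leftrightarrow v_{i+m,j}$ does not \emph{preserve} each fibre $\{v_{i,0},v_{i,1}\}$; it sends it to the fibre of $v_{i+m}$. That still suffices for Proposition~\ref{HM}, which only asks that fibres map to fibres, but your phrasing is off. Second, the claimed dichotomy ``identity or the global swap'' is not what the hub edges actually give you when $\gcd(r,m)>1$: they only force the swap pattern to be constant along each hub component, so a priori $\sigma$ could swap some antipodal $v$-pairs and fix others. One can check that every such consistent pattern still maps fibres to fibres (using that $m$ is odd), so the contradiction survives, but as written the step is asserted rather than proved; the paper avoids this by instead locating, via Proposition~\ref{HM}, a single spoke $4$-cycle whose $v$-pair is swapped while an adjacent one is fixed, and reading off a contradiction on the hub cycle through it.
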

\begin{proof}
Since $\sg{\rho,\beta}$ is transitive on $S_1 \cup S_2$, 
$\sigma(u_{0,0})=\alpha(u_{0,0})$ for some 
$\alpha \in \sg{\rho,\beta}$, where $\beta$ is the automorphism of $\cdc(\G)$ 
defined in \eqref{eq:beta}.   
As $\alpha^{-1}\sigma$ is unexpected, we may assume   
that $\sigma(u_{0,0})=u_{0,0}$. 

For $\alpha \in G$ such that $\alpha(S_1)=S_1$, 
let $\alpha^*$ denote the permutation of $S_1$ induced by the restriction of 
$\alpha$ to $S_1$.  
Clearly, $\alpha^*$ is an automorphism of the rim cycle containing 
$u_{0,0}$. Thus $\sigma^* \in \sg{\mu^*}$, hence we may assume  
that $\sigma \in G_{(S_1)}$. Here $\mu$ is the automorphism of $\cdc(\G)$ 
defined in \eqref{eq:mu}, in particular, $\mu(S_1)=S_1$ and $\mu^*$ is well-defined.

Assume first that $a$ is even. Then $r$ is even due to Lemma~\ref{bipartite}. 
If $a=m$, then $\G$ is in W1. 
Let $a \ne m$. 
Suppose that $4r \not\equiv 0\!\! \pmod n$. Then Lemma~\ref{pointwise}(i) yields that $\sigma$ is the identity, which is impossible, hence 
$4r \equiv 0\!\!\pmod n$. 

Assume for the moment that $r=m$. Denote by $\ell_s$ the length of the spoke cycles of $\cdc(\G)$.  
Since $a \ne m$, it follows that $\ell_s \geq 6$ according to~\eqref{eq:ell}. Let $X$ be a spoke cycle. 
Then $|\vertex(X) \cap (S_3 \cup S_4)|=\frac{1}{2}\ell_s$ and
\[
|\vertex(X) \cap S_1|=\frac{1}{2}\ell_s~\text{or}~|\vertex(X) \cap S_2|=\frac{1}{2}\ell_s. 
\]
Let us call $X$ of type $1$ in the former case and 
of type $2$ in the latter case. It is clear that $\sigma$ fixes all vertices of a spoke cycle of type $1$. Furthermore, the hub edges 
$\{v_{i,j},v_{i+m,j+1}\}$, where $i \in \Z_n, j \in \Z_2$, form a matching, and the vertices $v_{i,j}$ and $v_{i+m,j+1}$ 
belong to spoke cycles of distinct types. This shows that $\sigma$ must fix every vertex in $S_3 \cup S_4$, and consequently, 
also in $S_2$. This contradicts the assumption $\sigma$ is not the identity permutation, and thus $r \ne m$.
Using also that $r$ is even and $4r \equiv 0\!\!\pmod n$, we conclude that $r=\frac{n}{4}$ and $n$ is divisible by $8$, hence
that $\G$ is in W5.

Assume second that $a$ is odd. 
Then $4a \equiv 0\!\!\pmod n$ follows from Lemma~\ref{pointwise}(ii) and the condition that $\sigma \in G_{(S_1)}$.
We show below that $a \ne m$. 
On the contrary, assume that $a=m$. Fix an arbitrary vertex 
$v_{i,j}$ and let $X$ be the spoke cycle containing $v_{i,j}$. Then 
$$
X=\big(\, v_{i,j},u_{i,j+1},v_{i+m,j},u_{i+m,j+1}\, \big).
$$
This shows that $\sigma$ fixes both $u_{i,j+1}$ and $u_{i+m,j+1}$, and 
$\sigma(v_{i,j})=v_{i,j}$ or $v_{i+m,j}$. In view of Proposition~\ref{HM}, we may assume 
that 
\begin{equation}\label{eq:Bij}
\sigma(v_{i,j})=v_{i,j},~\sigma(v_{i+m,j})=v_{i+m,j},~\sigma(v_{i,j+1})=
v_{i+m,j+1}~\text{and}~\sigma(v_{i+m,j+1})=v_{i,j+1}.
\end{equation}

Let $Y$ be the hub cycle containing $v_{i,j}$. Then $\sigma(Y)=Y$, and thus  
$\sigma(v_{i',j'})=v_{i',j'}$, where $v_{i',j'}$ is the vertex antipodal to $v_{i,j}$ in $Y$. 
Then $v_{i',j'}=v_{i,j+1}$ if $r$ has odd order in $\Z_n$, and 
$v_{i',j'}=v_{i+m,j+1}$ otherwise. In either case we obtain a contradiction with  
the assumptions in \eqref{eq:Bij}, and by this we have shown that $a \ne m$.
Finally,  Lemma~\ref{pointwise}(iii) shows that $r$ is odd, and so $\G$ is in W6.
\end{proof}

For the rest of the section it will be assumed that 
$\sigma(S_1 \cup S_2) \ne S_1 \cup S_2$. Consequently, $G$ is transitive on 
$\vertex(\cdc(\G))$ and divides $\edge(\cdc(\G))$ into two orbits (recall that $\cdc(\G)$ 
is assumed to be non-edge-transitive).   
There are two hub cycles of the same length $n$, so by \eqref{eq:ell},
\begin{equation}\label{eq:r}
\gcd(r,n) \in \{1,2\},~\text{and if}~\gcd(n,r)=2,~\text{then}~m~\text{is odd}.
\end{equation}  

Denote by $X_1$ and $X_2$ the two 
rim cycles and by $X_3$ and $X_4$ the two hub cycles so that 
$\vertex(X_i)=S_i$ for $i=1,2$ and $v_{0,0} \in \vertex(X_3)$,  
where $S_1$ and $S_2$ are defined in \eqref{eq:S12}. 
If $r$ is even, then 
\begin{equation}\label{eq:r-even}
\vertex(X_3)=\{v_{i,j} : i \equiv 0\!\!\!\!\pmod 2\}~\text{and}~
\vertex(X_4)=\{v_{i,j} : i \equiv 1\!\!\!\!\pmod 2\};
\end{equation}
and if $r$ is odd, then 
\begin{equation}\label{eq:r-odd}
\vertex(X_3)=S_3~\text{and}~\vertex(X_4)=S_4.
\end{equation} 

Clearly, the subsets
\begin{equation}\label{eq:bsystem}
S_1=\vertex(X_1),~S_2=\vertex(X_2),~\vertex(X_3),~\vertex(X_4)
\end{equation}
form a block system for $G$. 

\begin{lem}\label{L2} 
Assuming Hypothesis~\ref{hypo1}, suppose that 
$\sigma(S_1 \cup S_2) \ne S_1 \cup S_2$ and let $K$ be the kernel of 
the action of $G$ on the block system defined in \eqref{eq:bsystem}.
If $K_{(S_1)} \ne 1$, then $\G$ is isomorphic to a graph in one of 
W1, W5 and W6.
\end{lem}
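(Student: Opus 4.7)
The plan is to pick a non-identity $\tau\in K_{(S_1)}$ and squeeze $(a,r)$ with two tools: Lemma~\ref{pointwise} applied contrapositively, and the fact that $K$ preserves each of $\vertex(X_3),\vertex(X_4)$ setwise. Since $K$ is the kernel of the action on the block system \eqref{eq:bsystem}, any such $\tau$ fixes $S_1$ pointwise and $S_2$ setwise, so Lemma~\ref{pointwise} forces all three of its conditions to fail. Combining this with Lemma~\ref{bipartite} (non-bipartiteness forbids $a$ even together with $r$ odd) and with \eqref{eq:r} (the hub cycles have length $n$ thanks to $\sigma(S_1\cup S_2)\ne S_1\cup S_2$, so $\gcd(r,n)\in\{1,2\}$, and $m$ is odd whenever $\gcd(r,n)=2$) cuts the possible parities for $(a,r)$ down to a short list.

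First I would dispose of the case $a$ even. Non-bipartiteness forces $r$ even, hence $\gcd(r,n)=2$ and $m$ is odd. The failure of Lemma~\ref{pointwise}(i) then yields either $2a\equiv 0\pmod n$ (so $a=m$, clashing with $a$ even and $m$ odd) or $4r\equiv 0\pmod n$ (so $m\mid r$ and hence $r=m$, clashing again). So this case is empty. For $a$ odd, the failure of (ii) gives $4a\equiv 0\pmod n$, and the failure of (iii) then splits into $2a\equiv 0\pmod n$ (i.e.\ $a=m$ with $m$ odd) or $r$ odd.

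The sub-case $a=m$ with $m$ odd has to be killed by hand, because Lemma~\ref{pointwise} is silent on it. Here $2a\equiv 0\pmod n$ collapses every spoke cycle into a $4$-cycle $(u_{i,j},v_{i,j+1},u_{i+m,j},v_{i+m,j+1})$. The two $u$-vertices are antipodal, so $\tau$ (fixing $u_{i,j}\in S_1$) also fixes $u_{i+m,j}$. Because $m$ is odd, the two $v$-vertices $v_{i,j+1}$ and $v_{i+m,j+1}$ lie in different hub blocks (via \eqref{eq:r-even} if $r$ is even and \eqref{eq:r-odd} if $r$ is odd), so $\tau\in K$ cannot swap them and therefore fixes them. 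Running this over every spoke $4$-cycle forces $\tau=1$, a contradiction. Hence the remaining sub-case must hold: $a$ odd, $4a\equiv 0\pmod n$, $2a\not\equiv 0\pmod n$, $r$ odd. A short arithmetic deduction then gives $m=2t$ with $t$ odd, $a=t=n/4$, and $\gcd(r,n)=1$, which are exactly the parameters of family W6.

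The principal obstacle is the hand analysis of the $a=m$, $m$ odd sub-case, as Lemma~\ref{pointwise} does not apply there and one must lean on the spoke $4$-cycle geometry together with the hub-block preservation built into $K$ to force $\tau$ back to the identity; once that is done, the rest is pure arithmetic on $(a,r)$.
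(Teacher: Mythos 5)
Your argument is correct and follows essentially the same route as the paper: a non-identity $\tau\in K_{(S_1)}$ fixes $S_1$ pointwise and $S_2$ setwise, Lemma~\ref{pointwise} is applied in the contrapositive, and the sub-case $a=m$ with $m$ odd is eliminated exactly as in the paper, by noting that each spoke $4$-cycle meets each of the four blocks of \eqref{eq:bsystem} in exactly one vertex. The only divergence is the case $a$ even, which the paper handles by rerunning the proof of Lemma~\ref{L1} (yielding W1 or W5), whereas you combine \eqref{eq:r} with Lemma~\ref{bipartite} to show that this case is in fact vacuous; that deduction is sound and gives the slightly sharper conclusion that only W6 can occur here.
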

\begin{proof}
Fix $ \tau \in K_{(S_1)}$ such that $\tau$ is not the identity permutation. 
Clearly, $\tau$ fixes $S_1$ pointwise and $S_2$ setwise.  

Thus if $a$ is even, then the proof of Lemma~\ref{L1} can be repeated with 
$\tau$ playing the role of $\sigma$.

Let  $a$ be odd. If $a=m$, then every spoke cycle contains exactly one vertex from 
$S_1$, $S_2$, $V(X_3)$ and $V(X_4)$, respectively, implying that $\tau$ is the 
identity permutation, which is impossible. Hence $a \ne m$, and  
the argument used in the proof of Lemma~\ref{L1} can be applied again. 
\end{proof}

\begin{lem}\label{L3} 
Assuming Hypothesis~\ref{hypo1}, suppose that 
$\sigma(S_1 \cup S_2) \ne S_1 \cup S_2$ and let $K$ be the kernel of 
the action of $G$ on the block system defined in \eqref{eq:bsystem}.
If $K_{(S_1)}=1$, then $\G$ is isomorphic to a graph in one of W7, W8 and W9.
\end{lem}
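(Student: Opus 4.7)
The strategy is to study the faithful action of $\bar G := G/K$ on the $4$-block system $\mathcal B = \{S_1,\, S_2,\, \vertex(X_3),\, \vertex(X_4)\}$ from \eqref{eq:bsystem}. Since $\rho$ induces the permutation $(S_1\; S_2)(\vertex(X_3)\; \vertex(X_4))$ on $\mathcal B$, while $\sigma$ sends a rim-block into a hub-block by hypothesis, $\bar G$ is a transitive subgroup of $\sym(\mathcal B)\cong S_4$ that mixes the rim- and hub-blocks. I would first limit the possibilities for $\bar G$ — aiming for $\Z_2^2$, $\Z_4$, or $D_8$ — ruling out $A_4$ and $S_4$ by applying Proposition~\ref{L} to a suitable cyclic section of $\sg{\rho}$.

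Next I would pin down $K$. Because $K_{(S_1)}=1$, the restriction $K \to \aut(X_1) \cong D_{2n}$ is injective, and $\rho^2 \in K$ acts on $X_1$ as a rotation of order $m$. A parity check using \eqref{eq:mu}, \eqref{eq:beta}, \eqref{eq:r-even} and \eqref{eq:r-odd} determines which of the expected automorphisms $\mu$, $\beta\rho$, $\mu\beta\rho$ lie in $K$ — for instance $\beta\rho\in K$ iff $r$ is odd, and $\mu\in K$ iff $a$ is even — which pins down $|K|$ and hence $|G|=|K|\cdot|\bar G|$.

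With $K$ and $\bar G$ in hand, I would exploit $\sigma$ itself. After composing with expected automorphisms, I may assume $\sigma$ maps the rim cycle $X_1$ isomorphically onto a hub cycle, say $X_3$. Comparing cycle lengths via \eqref{eq:ell} recovers $\gcd(r,n)\in\{1,2\}$ as already noted in \eqref{eq:r}. Writing $\sigma|_{X_1}$ as an affine map on the cyclic indexing of $X_3$ with multiplier $t$, and imposing that the spoke edges of $\cdc(\G)$ are preserved, converts the data into a small congruence system in $a$, $r$, $t$. I expect two families of solutions. If $a=m$, the spoke cycles have length $4$ and $\sigma^2\in K$ forces $r^2\equiv \pm 1\pmod m$, giving W7 when $r$ is odd and W8 when $r$ is even. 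If $a\ne m$, the multiplier $t$ plays the role of Wilson's $s$: $s^2\equiv 1\pmod n$ comes from $\sigma^2\in K$, $sa\equiv -a\pmod n$ comes from matching spoke neighbours, and $r=m-s$ comes from the alignment of $X_1$ with $X_3$, landing in W9.

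The main obstacle, I anticipate, is the case analysis on $\bar G$ crossed with the parity patterns for $(a,r)$. Each configuration must either be fitted exactly into one of W7--W9 or ruled out using the hypotheses: non-triviality of the instability (no two vertices have the same neighborhood), non-edge-transitivity of $\cdc(\G)$ (Hypothesis~\ref{hypo1}), or non-bipartiteness of $\G$ (Lemma~\ref{bipartite}). Ruling out $\bar G\cong A_4$ or $S_4$ is the most delicate step, since it requires a genuinely group-theoretic ingredient; Proposition~\ref{L}, applied to a cyclic subgroup of $G$ whose $G$-core is contained in $K$, should supply the needed upper bound on $|\bar G|$.
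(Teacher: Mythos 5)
Your overall skeleton matches the paper's: normalize $\sigma$ so that it swaps $u_{0,0}$ and $v_{0,1}$, use $K_{(S_1)}=1$ to embed $K$ into $D_{2n}$ with $\rho^2\in K$ (whence $\sg{\rho^2}\lhd G$ and $\sigma\rho^i=\rho^{ri}\sigma$ for even $i$), write $\sigma|_{S_1}$ as an affine map with multiplier $r$, and extract congruences from edge-preservation. Two points, however, are genuine gaps rather than omitted routine detail. First, and most seriously, the congruence system coming from ``spoke edges are preserved'' does \emph{not} by itself force $sa\equiv -a\pmod n$ with $s=m+r$: when $r$ is odd the path computation only yields $2(ra+a)\equiv 0\pmod n$, and the branch $ra+a\equiv 0\pmod n$ is fully consistent with all adjacency constraints. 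That branch must be killed by showing that any such $\sigma$ commutes with $\beta$ and is therefore an \emph{expected} automorphism (Lemma~\ref{cor-HM}), contradicting the choice of $\sigma$; equivalently, by Proposition~\ref{DKM3} that branch describes graphs with $r^2\equiv 1\pmod n$ and $ra\equiv -a\pmod n$, which are stable and do not lie in W7--W9. Your outline never invokes the unexpectedness of $\sigma$ at this stage, so as written the system is underdetermined and the proof does not close.

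Second, your treatment of $a=m$ is wrong: you claim it yields ``W7 when $r$ is odd and W8 when $r$ is even,'' but both W7 and W8 have $m$ odd and $r$ even (in W7, $r=m-1$); the actual dichotomy is whether $r\equiv\pm 1\pmod m$ (W7) or not (W8), and the configuration $a=m$ with $r$ odd does not survive the analysis at all (it falls into Case~2, where one derives $ra\equiv m-a\pmod n$, forcing $m$ even and $a\ne m$). A smaller remark: your ``most delicate step,'' ruling out $\bar G\cong A_4$ or $S_4$, is a non-issue. Since $\cdc(\G)$ has two edge-orbits and $\sigma$ swaps the rim-pair $\{S_1,S_2\}$ with the hub-pair $\{\vertex(X_3),\vertex(X_4)\}$, every element of $G$ preserves or swaps these two pairs, so $\bar G$ embeds into a dihedral group of order $8$ automatically and Proposition~\ref{L} is not needed here.
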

\begin{proof}
Since $\sg{\rho,\beta}$ is transitive on $S_3 \cup S_4$, 
$\sigma(u_{0,0})=\alpha(v_{0,1})$ for some 
$\alpha \in \sg{\rho,\beta}$, where $\beta$ is the automorphism of $\cdc(\G)$ 
defined in \eqref{eq:beta}.   As $\alpha^{-1}\sigma$ is unexpected, we may assume that $\sigma(u_{0,0})=v_{0,1}$. Then the hub cycle containing $v_{0,1}$ 
is not preserved by $\sigma$, hence we have that $\sigma(v_{0,1})=u_{0,0}$ or 
$u_{a,0}$. Replacing $\sigma$ with $\rho^a\mu\sigma$ in the latter case,  we may also assume that $\sigma$ %switches
interchanges $u_{0,0}$ and $v_{0,1}$. 

Furthermore, replacing $\G$ with $R_n(a,-r)$ if necessary (see Proposition~\ref{iso}), 
we may also assume that $\sigma(u_{1,1})=v_{r,0}$ holds. Using that $\sigma$ is an isomorphism from 
$X_1$ to the hub cycle containing $v_{0,1}$, which maps 
the arc $(u_{0,0},u_{1,1})$ to the arc $(v_{0,1},v_{r,0})$, we deduce that 
\begin{equation}\label{eq:1}
\sigma(u_{i,j})=v_{ri,j+1}~\text{for every}~u_{i,j} \in S_1.
\end{equation}

Since $K_{(S_1)}=1$, it follows that $K$ is isomorphic to a subgroup of $D_{2n}$. 
It follows from \eqref{eq:r-even} and 
\eqref{eq:r-odd} that $\rho^2 \in K$. 

Suppose that $n=4$, so $1 \le a,r \le 2$. Then $r=1$ by \eqref{eq:r}. 
Since $\G$ has no two vertices with the same neighborhood, it follows 
that $a=1$, and so $\G$ is isomorphic to the graph in W9 with 
$m=2$, $a=1$ and $s=3$ due to Proposition~\ref{iso}. 

For the rest of the proof we assume that $n > 4$. Then $\sg{\rho^2}$ is 
characteristic in $K$. Using also that $K \triangleleft G$, we obtain that 
$\sg{\rho^2} \lhd G$.  This implies that there is an integer $k$ such that 
$\gcd(k,m)=1$ and $\sigma \rho^2=\rho^{2k} \sigma$.  
Thus 
$$
\sigma(u_{2,0})=\sigma\rho^2(u_{0,0})=
\rho^{2k}\sigma(u_{0,0})=v_{2k,1}. 
$$
On the other hand, $\sigma(u_{2,0})=v_{2r,1}$ due to \eqref{eq:1}, hence 
$2k \equiv 2r \pmod n$, $k \equiv r \pmod m$,  and therefore, we may assume   
that $k=r$.  Note that, if $i$ is even, then $\sigma \rho^i \sigma^{-1}=
(\sigma \rho^2 \sigma^{-1})^{\frac{i}{2}}=\rho^{ri}$.  
The following identity holds. 
\begin{equation}\label{eq:2}
\sigma \rho^{i}=\rho^{ri}\sigma~\text{for every}~i \in \Z_n,~i \equiv 0\!\!\!\!\pmod 2.
\end{equation}

\noindent{\bf Case~1.} $r$ is even.
\medskip

By \eqref{eq:r}, $m$ is odd, so $u_{m,1}$ is the vertex antipodal to $u_{0,0}$ 
in $X_1$. Then $\sigma^2(u_{m,1})=u_{m,1}$. Using also \eqref{eq:1}, we 
obtain that $\sigma(v_{0,0})=\sigma^2(u_{m,1})=u_{m,1}$. Thus by 
\eqref{eq:2}, $\sigma(v_{r,0})=\sigma \rho^r(v_{0,0})=\rho^{r^2}\sigma(v_{0,0})=
u_{r^2+m,1}$. On the other hand,  
$\sigma(v_{r,0}) \sim \sigma(v_{0,1})=u_{0,0}$, implying that  
 $r^2 \equiv \pm 1 \pmod m$. 
 
Assume first that $a$ is odd. 
If $a=m$, then $\G$ belongs to W8 if $r \not\equiv \pm 1 \pmod m$, 
and it belongs to W7 if $r \equiv \pm 1 \pmod m$. 
Assume that $a < m$. Observe that $\{u_{a,1}\}=\cdc(\G)(v_{0,0}) \cap S_1$. 
Since $r$ is even, $v_{0,1} \in V(X_3)$, see \eqref{eq:r-even}, hence 
$\sigma(S_1)=V(X_3)$. Using also \eqref{eq:1} and \eqref{eq:2}, 
we have that  
$$
\{v_{ra,0}\}=\{\sigma(u_{a,1})\}=
\cdc(\G)(\sigma(v_{0,0})) \cap \vertex(X_3)=\cdc(\G)(u_{m,1}) \cap 
\vertex(X_3)=\{v_{m-a,0}\}.
$$
This shows that $ra \equiv m-a\!\!\pmod n$. 
Then $r^2 a \equiv -ra \equiv m+a\!\!\pmod n$, and using also that 
$r^2 \equiv \pm 1\!\! \pmod m$ and $a < m$, we deduce that 
$r^2 \equiv 1\!\!\pmod m$. Set $s=m+r$. 
Then $\G=R_{2m}(a,m+s)=R_{2m}(a,m-s)$, 
$s^2 \equiv 1\!\!\pmod n$ and $sa \equiv -a\!\!\pmod n$, hence 
$\G$ belongs to W9. 

Now let $a$ be even. Then $u_{0,0}=\sigma(v_{0,1}) \sim \sigma(u_{a,0})=v_{ra,1}$, 
implying that $ra \equiv -a\!\!\pmod n$. It follows that $r^2 \equiv 1\!\!\pmod m$, and 
taking $s=m+r$, we find that $\G$ belongs to W9. 
\medskip

\noindent{\bf Case~2.} $r$ is odd. 
\medskip

Then $\vertex(X_3)=S_3$ and $\vertex(X_4)=S_4$, where $S_3$ and $S_4$ are 
the sets defined in \eqref{eq:S34}, see \eqref{eq:r-odd}.
By Lemma~\ref{bipartite}, $a$ is odd.  Consider the path 
$$
\mathbf{P}=(v_{-a,0},u_{0,1},v_{0,0},u_{a,1},v_{a,0})
$$ 
and its image $\sigma(\mathbf{P})$, see Figure~\ref{fig1}. 
%Its vertices 
The vertices of $\mathbf{P}$ (from the left to the right) 
belong to the sets $S_4, S_2, S_3, S_1$ and $S_4$, respectively. Therefore, 
the their images are from  $S_1, S_3, S_2, S_4$ and $S_1$, respectively. 
Furthermore, by \eqref{eq:1}, $\sigma(u_{a,1})=v_{ra,0}$, so $x_4=ra$. All these imply in turn that, $x_5=ra$, $x_3=ra+a$, $x_2=ra+a$ and $x_1=ra+2a$. 
Using \eqref{eq:2}, we find that 
\begin{eqnarray*}
u_{ra+2a,1} &=& \sigma(v_{-a,0})=\sigma \rho^{-a-1}(v_{1,0})=\rho^{-ra-r}
\sigma(v_{1,0}) \\ 
u_{ra,1} &=& \sigma(v_{a,0})=\sigma \rho^{a-1}(v_{1,0})=\rho^{ra-r}
\sigma(v_{1,0}).
\end{eqnarray*}
Thus $2(ra+a) \equiv 0\!\!\pmod n$, whence $ra+a=0$ or $m$ in $\Z_n$.

\begin{figure}[t]
\begin{center}
\begin{tikzpicture}[scale=1.5] ---
\fill (0,0) circle (1.5pt);
\fill (1,0) circle (1.5pt);
\fill (2,0) circle (1.5pt);
\fill (3,0) circle (1.5pt);
\fill (4,0) circle (1.5pt);
\fill (0,1) circle (1.5pt);
\fill (1,1) circle (1.5pt);
\fill (2,1) circle (1.5pt);
\fill (3,1) circle (1.5pt);
\fill (4,1) circle (1.5pt);
\draw (0,0) node[below]{\footnotesize $u_{x_1,1}$};
\draw (1,0) node[below]{\footnotesize $v_{x_2,0}$};
\draw (2,0) node[below]{\footnotesize $u_{x_3,1}$};
\draw (3,0) node[below]{\footnotesize $v_{x_4,0}$};
\draw (4,0) node[below]{\footnotesize $u_{x_5,1}$};
\draw (0,1) node[above]{\footnotesize $v_{-a,0}$};
\draw (1,1) node[above]{\footnotesize $u_{0,1}$};
\draw (2,1) node[above]{\footnotesize $v_{0,0}$};
\draw (3,1) node[above]{\footnotesize $u_{a,1}$};
\draw (4,1) node[above]{\footnotesize $v_{a,0}$};
\draw (0,0) -- (1,0) -- (2,0) -- (3,0) -- (4,0); 
\draw (0,1) -- (1,1) -- (2,1) -- (3,1) -- (4,1);
%sigma
\draw (-0.25,0.5) node{\footnotesize $\sigma$};
\draw (-0.05,0.7) -- (0.05,0.7);
\draw (0.95,0.7) -- (1.05,0.7);
\draw (1.95,0.7) -- (2.05,0.7);
\draw (2.95,0.7) -- (3.05,0.7);
\draw (3.95,0.7) -- (4.05,0.7);
\draw[->] (0,0.7) -- (0,0.3);
\draw[->] (1,0.7) -- (1,0.3);
\draw[->] (2,0.7) -- (2,0.3);
\draw[->] (3,0.7) -- (3,0.3);
\draw[->] (4,0.7) -- (4,0.3);
\end{tikzpicture}
\caption{The path $\mathbf{P}$ and its image under $\sigma$.} 
\label{fig1}
\end{center}
\end{figure}
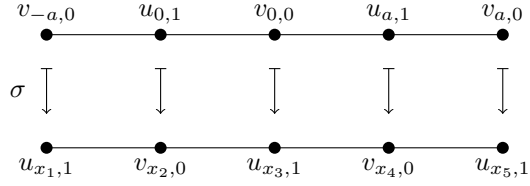

Assume first that $ra+a \equiv 0\!\!\pmod n$. We show below that this possibility cannot occur. 
Notice that, then  
\begin{equation}\label{eq:sigma1}
\sigma(u_{0,1})=v_{0,0},~\sigma(v_{0,0})=u_{0,1}~\text{and}~
\sigma(v_{1,0})=u_{r,1}.
\end{equation}

By the initial assumptions on $\sigma$ we also have
\begin{equation}\label{eq:sigma2}
\sigma(u_{0,0})=v_{0,1},~\sigma(u_{1,1})=v_{r,0}~\text{and}~\sigma(v_{0,1})=u_{0,0}.
\end{equation}

Now $\cdc(\G)(v_{0,1}) \cap S_2=\{u_{a,0}\}$ yields 
$$
\{\sigma(u_{a,0})\}=\cdc(\G)(\sigma(v_{0,1})) \cap S_3=
\cdc(\G)(u_{0,0}) \cap S_3=v_{-a,1},
$$
from which it follows that $v_{-a,1}=\sigma(u_{a,0})=\sigma \rho^{a-1}(u_{1,0})=\rho^{ra-r}\sigma(u_{1,0})$, so $\sigma(u_{1,0})=v_{r,1}$. 
It can be shown in the same way that $\sigma(v_{-a,1})=u_{a,0}$, and deduce from 
this that $\sigma(v_{1,1})=u_{r,0}$. To sum up,
\begin{equation}\label{eq:sigma3}
\sigma(u_{1,0})=v_{r,1}~\text{and}~\sigma(v_{1,1})=u_{r,0}.
\end{equation}

The conditions in \eqref{eq:sigma1}--\eqref{eq:sigma3} together with \eqref{eq:2} yield that $\sigma$ commutes with $\beta$. This, however, contradicts the assumption that 
$\sigma$ is unexpected due to Lemma~\ref{cor-HM}. We verify that $\sigma \beta(u_{i,j})=\beta \sigma(u_{i,j})$ 
and $\sigma\beta(v_{i,j})=\beta \sigma(v_{i,j})$
for the case when $i$ is even and $j=0$ only, the remaining cases can 
be handled in the same way. 
\begin{eqnarray*}
\sigma \beta(u_{i,0}) &=& \sigma(u_{i,1})=\sigma \rho^{i}(u_{0,1})=\rho^{ri} 
\sigma(u_{0,1})=\rho^{ri} (v_{0,0})=\beta \rho^{ri}(v_{0,1}) \\ 
&=& 
\beta \rho^{ri} \sigma(u_{0,0})=\beta \sigma \rho^{i}(u_{0,0})=\beta \sigma (u_{i,0}).
\end{eqnarray*}
and
\begin{eqnarray*}
\sigma \beta(v_{i,0}) &=& \sigma(v_{i,1})=\sigma \rho^{i}(v_{0,1})=\rho^{ri} 
\sigma(v_{0,1})=\rho^{ri} (u_{0,0})=\beta \rho^{ri}(u_{0,1}) \\ 
&=& 
\beta \rho^{ri} \sigma(v_{0,0})=\beta \sigma \rho^{i}(v_{0,0})=\beta \sigma (v_{i,0}).
\end{eqnarray*}

We have shown that $ar+a \not\equiv 0\!\!\pmod n$, and therefore, $ar
\equiv m-a\!\!\pmod n$. It follows that $m$ is even, in particular, $a \ne m$. 
Then $u_{0,0}=\sigma(v_{0,1}) \sim \sigma(v_{r,1})=\rho^{r^2-r}\sigma(v_{1,1})=
u_{r^2+m,1}$, implying that $r^2 \equiv \pm 1\!\!\pmod m$. 
Using also that $ra \equiv -a\!\!\pmod m$ and $a \ne m$, we have 
that $r^2 \equiv 1\!\!\pmod m$.
Finally, taking $s=m+r$, we find that $\G$ belongs to W9. 
\end{proof}
%------------------------------------------------------------------------------------%
\section{Proof of case (iii) of Theorem~\ref{main}}\label{sec:proof3}
%------------------------------------------------------------------------------------%
For this section we set the following assumptions. 

\begin{hypo}\label{hypo2}
$\G=R_n(a,r)$ is a non-trivially unstable graph, $n=2m$, $1 \le a, r \le m$ and $\cdc(\G)$ is edge-transitive.  Furthermore, 
\begin{quote}
$G=\aut(\cdc(\G))$,  \\ [+1ex] 
$G^+$ is the subgroup of $G$ preserving the biparts of $\cdc(\G)$, \\ [+1ex] 
$C=\sg{\rho}$, where $\rho$ is defined in \eqref{eq:rho}, \\ [+1ex]
$L=\sg{\rho^m}$.
\end{quote}
\end{hypo}

Note that, if $n > 4$, then by Lemma~\ref{not 3-AT}, $\cdc(\G)$ is $s$-transitive for 
$s \le 2$, in particular, it is vertex-transitive. These properties will be used a couple of times in the section. 
\medskip

In order to settle case (iii) of Theorem~\ref{main}, we have to 
show that under the above assumptions $\G$ is isomorphic to 
a graph in one of W2 and W4.  We deal first with the case when $C \lhd G$. 

\begin{lem}\label{L4}
Assuming Hypothesis~\ref{hypo2}, suppose that $C \triangleleft G$. Then $m$ is odd and 
$\G$ is isomorphic to a graph in the family W4.
\end{lem}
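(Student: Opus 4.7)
My plan is to exploit the block system given by the four $C$-orbits on $\vertex(\cdc(\G))$. These are
\[
A_0 = \{u_{i,0}:i \in \Z_n\},\ A_1 = \{u_{i,1}\},\ B_0 = \{v_{i,0}\},\ B_1 = \{v_{i,1}\},
\]
each of size $n$; since $C \triangleleft G$, they form a $G$-invariant block system. Inspecting which block-pairs carry edges of $\cdc(\G)$ (rim edges link $A_0$ with $A_1$; hub edges link $B_0$ with $B_1$; the two spoke types link $A_0$ with $B_1$ and $A_1$ with $B_0$), the quotient graph on the four blocks is the $4$-cycle $A_0 - A_1 - B_0 - B_1 - A_0$. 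Let $K$ be the kernel of this block action, so $G/K$ embeds in $\aut(\C_4) \cong D_8$ and acts edge-transitively on the $4$-cycle by edge-transitivity of $\cdc(\G)$. Now $\beta$ acts on the blocks as the edge-midpoint reflection swapping $A_0 \leftrightarrow A_1$ and $B_0 \leftrightarrow B_1$; since this particular reflection lies in no proper edge-transitive subgroup of $D_8$, I conclude $G/K = D_8$.

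I then fix lifts $\sigma, \tau \in G$ of the order-$4$ rotation of the $4$-cycle and of its square, respectively. The lift $\sigma$ carries rim edges onto the spokes between $A_1$ and $B_0$; comparing cycle lengths via \eqref{eq:ell} forces $\gcd(a,n) = 2$. The lift $\tau$ swaps $A_0 \leftrightarrow B_0$ and $A_1 \leftrightarrow B_1$, hence exchanges rim and hub, so by \eqref{eq:ell} again $\gcd(r,n) = 2^{\varepsilon}$ (in particular $\gcd(r,n) \in \{1,2\}$).

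Since $\tau$ normalises $C$, $\tau \rho \tau^{-1} = \rho^k$ for some $k$ coprime to $n$. Composing $\tau$ with a suitable element of $\sg{\rho,\mu,\beta}$, I may assume $\tau(u_{0,0}) = v_{0,0}$, and then compute $\tau$ on the spoke neighbours $v_{0,1}, v_{-a,1}$ of $u_{0,0}$, which must be sent to the spoke neighbours $u_{0,1}, u_{a,1}$ of $v_{0,0}$. The rim-to-hub arc matching gives $k \equiv \pm r \pmod n$, while the two possible matchings of the spoke pairs together with $\tau \rho \tau^{-1} = \rho^k$ yield $a(r \pm 1) \equiv 0 \pmod n$. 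Using $\gcd(a,n) = 2$, so $\gcd(a/2, m) = 1$, the congruence $a/2 \cdot (r \pm 1) \equiv 0 \pmod m$ reduces to $r \equiv \mp 1 \pmod m$; hence $r \in \{1, m-1\}$. Because $a$ is even and $n$ is even, Lemma~\ref{bipartite} tells us that non-bipartiteness of $\G$ requires $r$ to be even. This rules out $r = 1$, and in $r = m-1$ rules out $m$ even, leaving $r = m-1$ with $m$ odd.

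Finally, $a \ne m$: otherwise $\gcd(a,n) = m \ge 3$ (as $m$ is odd and $m \ge 3$), contradicting $\gcd(a,n) = 2$. Hence $\G$ is isomorphic to a graph in W4.

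The main obstacle is the bookkeeping in the third paragraph: the normalisation of $\tau$ must be performed carefully, and the several divisibility conditions coming from the rim-hub matching and the spoke-pair matching have to be combined with the non-bipartiteness condition to pin down both $r = m-1$ and the parity of $m$ simultaneously.
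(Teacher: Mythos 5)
Your proof is correct, but it takes a genuinely different route from the paper's. The paper works with a single stabilizer element $\sigma \in G_{u_{0,0}}$ sending the rim neighbour $u_{1,1}$ to the spoke neighbour $v_{0,1}$ (available by arc-transitivity), and extracts everything from the conjugation relation $\sigma\rho^i=\rho^{ki}\sigma$: first $a\equiv 2k\pmod n$ (so $a$ is even), then a common-neighbour argument inside $\orb_C(v_{0,0})$ giving $2kr\equiv\pm 2k\pmod n$, i.e.\ $r\equiv\pm1\pmod m$; bipartiteness then forces $r$ even, $m$ odd and $r=m-1$, exactly as you conclude. You instead set up the quotient $4$-cycle on the four $\rho$-orbits, identify the block action as the full $D_8$ (the observation that the image of $\beta$ is an edge-midpoint reflection, which lies in neither proper edge-transitive subgroup of $D_8$, is the right one), and then read off $\gcd(a,n)=2$ and $\gcd(r,n)\le 2$ from the cycle-structure comparisons rim $\leftrightarrow$ spoke and rim $\leftrightarrow$ hub --- information the paper obtains only implicitly from $a\equiv 2k\pmod n$. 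The price is the extra block-system bookkeeping and the need to normalize a second element $\tau$. One small slip: the rim-to-hub arc matching only yields $2k\equiv\pm 2r\pmod n$, hence $k\equiv\pm r\pmod m$ rather than $\pmod n$; this is harmless here, because in the congruence $a(k\pm1)\equiv 0\pmod n$ coming from the spoke-pair matching, replacing $k$ by $k+m$ changes the left-hand side by $am\equiv 0\pmod{2m}$ ($a$ being even), so $a(r\pm1)\equiv 0\pmod n$ and then $r\equiv\mp1\pmod m$ still follow. With that caveat, every step checks out and the conclusion agrees with the paper's.
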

\begin{proof}
Fix $\sigma \in G_{u_{0,0}}$ such that $\sigma(u_{1,1})=v_{0,1}$. 
There is an integer $k$ such that $\gcd(k,n)=1$ and 
$\sigma \rho^i =\rho^{ki}\sigma$ for every $i \in \Z_n$. Then 
$$
\sigma(u_{-1,1})=\sigma\rho^{-2}(u_{1,1})=\rho^{-2k}\sigma(u_{1,1})=v_{-2k,1}. 
$$
This implies that $v_{-2k,1}=v_{-a,1}$, hence $a \equiv 2k\!\!\pmod n$. 
Note that $a$ is even.  Then $\sigma(v_{0,1})=u_{1,1}$ or $u_{-1,1}$. 
Here we consider only the first case as the other case can be handled in the same way. 
For every $i \in \Z_n$, 
\begin{eqnarray*}
\sigma(u_{i,0}) &=& \sigma\rho^{i}(u_{0,0})=\rho^{ki}\sigma(u_{0,0})=u_{ki,0}, \\
\sigma(u_{i,1}) &=& \sigma\rho^{i-1}(u_{1,1})=\rho^{ki-k}\sigma(u_{1,1})=v_{ki-k,1}, \\
\sigma(v_{i,1}) &=& \sigma\rho^{i}(v_{0,1})=\rho^{ki}\sigma(v_{0,1})=u_{ki+1,1}. 
\end{eqnarray*}
Note that $\orb_C(v_{0,0})$ is mapped to itself by $\sigma$. 
Thus $u_{-kr+1,1}=\sigma(v_{-r,1})$ and 
$u_{kr+1,1}=\sigma(v_{r,1})$ share a common neighbor in 
$\orb_C(v_{0,0})$. This implies that 
$2kr \equiv \pm a \equiv \pm 2k\!\!\pmod n$, whence 
$r \equiv \pm 1 \pmod m$. Since $a$ is even, $r$ must be even 
as well because $\G$ is not bipartite. Thus $m$ is odd and 
$r=m-1$ or $m+1$. We conclude that $\G$ is isomorphic to a graph in 
the family W4.
\end{proof}

The case when $C$ is not normal in $G$ requires three preparatory lemmas. 
We denote the center of $G$ by $Z(G)$, and for a subgroup $H \le G$ and an element $g \in G$, 
by $C_G(H)$ and $C_G(g)$ the centralizers of $H$ and $g$, respectively, in $G$. 

\begin{lem}\label{L5}
Assuming Hypothesis~\ref{hypo2}, $L \triangleleft G$, consequently, 
$\rho^m \in Z(G)$.
\end{lem}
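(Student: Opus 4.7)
I focus on proving $L \triangleleft G$, since $\rho^m \in Z(G)$ then follows automatically (any normal subgroup of order $2$ is central). I split the argument into two cases, depending on whether $C$ itself is normal in $G$.

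If $C \triangleleft G$, then, since $C$ is cyclic of order $n = 2m$, its unique subgroup of order $2$ is $L$. Thus $L$ is characteristic in $C$ and hence $L \triangleleft G$.

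If $C \not\triangleleft G$, the plan is to show that the partition $\B$ of $\vertex(\cdc(\G))$ into $L$-orbits, namely the pairs $\{u_{i,j}, u_{i+m,j}\}$ and $\{v_{i,j}, v_{i+m, j}\}$, is a block system for $G$. Once this is done, Lemma~\ref{cyclic block}(ii) applied with $H = L$ yields $L \triangleleft G$ directly, since $|L| = 2 < m = n/2$ when $n > 4$. The block-system claim I would attack by contradiction, invoking Proposition~\ref{L}. Suppose $L \not\triangleleft G$ and let $K$ be the core of $C$ in $G$. Then $L \not\le K$: otherwise, $L$ would be characteristic in the cyclic group $K$ of even order, hence normal in $G$. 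Thus $|K|$ must be odd. Passing to $\bar G = G/K$, the image $\bar C$ of $C$ is a core-free cyclic subgroup of even order $2m/|K|$, and Proposition~\ref{L} gives $|\bar C|^2 < |\bar G|$, which simplifies to $|G| > 4m^2/|K|$. Since $\cdc(\G)$ is vertex-transitive (by Lemma~\ref{not 3-AT}, $\cdc(\G)$ is arc-transitive for $n > 4$), $|G| = 8m \cdot |G_{u_{0,0}}|$, whence
\[
|G_{u_{0,0}}| > \frac{m}{2|K|}.
\]
By Proposition~\ref{s-AT}, $|G_{u_{0,0}}|$ is $12$ or $24$ in the $s = 2$ case, and a $2$-group with prescribed transitive local action on the $4$-neighborhood in the $s = 1$ case. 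Combining these constraints with the divisibility relation $|K| \mid m$ (as $|K|$ is odd and $|K|$ divides $2m$) and the classification of edge-transitive Rose Window graphs in Proposition~\ref{KKM1} should produce a contradiction.

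The case $n = 4$ is exceptional---Lemma~\ref{cyclic block}(ii) does not apply when $|L| = n/2$---and needs to be checked separately by direct inspection of the few non-trivially unstable $R_4(a,r)$ with $1 \le a, r \le 2$. The main obstacle is closing the $s = 1$ branch of the Lucchini argument, where $|G_{u_{0,0}}|$ is a priori an unbounded $2$-group; handling this will likely require a finer analysis of the suborbit structure of $G_{u_{0,0}}$ on $\vertex(\cdc(\G))$ or a direct argument (using the rim cycle structure through $u_{0,0}$) showing that $G_{u_{0,0}}$ fixes $u_{m, 0}$.
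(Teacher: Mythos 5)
Your proposal has a genuine gap, and you have in fact named it yourself: the entire case where $G_{u_{0,0}}$ is a $2$-group is missing, and this is precisely the hard half of the argument. The Lucchini bound is useless there because $|G_{u_{0,0}}|$ is a priori unbounded, so the inequality $|G_{u_{0,0}}| > m/(2|K|)$ carries no information. The paper closes this case not by a counting argument at all, but by a structural one: it takes a Sylow $2$-subgroup $T$ of $G^+$ containing $G_{u_{0,0}}$, observes that $\orb_T(u_{0,0})$ is a block for $G$, passes to a minimal block $\Delta \ni u_{0,0}$ inside it, and shows (using that $G_{u_{0,0}}$ is a $2$-group, that the set $\{w : G_w = G_{u_{0,0}}\}$ is a block, and that $\mu \in G_{u_{0,0}}$) that $\Delta$ must be one of three explicit sets; the two sets other than $\{u_{0,0},u_{m,0}\}$ are then excluded by a primitivity argument and by an analysis of the induced subgraph between adjacent blocks, leaving $\B=\orb(L,V)$ as a block system, whence $L \lhd G$ by Lemma~\ref{cyclic block}(ii). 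Nothing in your sketch points toward this kind of argument, and the ``finer analysis of the suborbit structure'' you defer to is exactly the missing proof.

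Even your $s=2$ branch does not close as written. When the core $K$ of $C$ is non-trivial (and odd), your inequality only bounds $m/|K|$, not $m$, so no contradiction follows for large $n$. The paper resolves this by setting up an induction on $n$: if the normal subgroup $N \le C$ produced by Lucchini has odd order, it passes to the quotient graph $\cdc(\G)/H \cong \cdc(R_k(a',r'))$ via Lemma~\ref{cyclic block}(ii)--(iv), applies the induction hypothesis there, and pulls the normality of $L$ back; the base of the induction ($n \le 48$) is checked by computer. Without this inductive quotient step your argument has no way to finish even in the bounded-stabilizer case. Your treatment of the case $C \lhd G$ is fine (and the deduction $\rho^m \in Z(G)$ from $L \lhd G$, $|L|=2$ is correct), but the proposal as it stands proves the lemma only in that easy case.
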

\begin{proof}
For the sake of simplicity, let $V=\vertex(\cdc(\G))$. 
We proceed by induction on $n$. 
With the help of the computer algebra package 
{\sc Magma}~\cite{BCP} we checked that the lemma holds if $n \le 48$. 
From now on it is assumed that $n > 48$.
\medskip

\noindent{\bf Case~1.} $G_{u_{0,0}}$ is not a $2$-group.
\medskip

As the graph $\cdc(\G)$ is $s$-arc-transitive for $s \le 2$, see the remark after Hypothesis~\ref{hypo2}, it follows from Proposition~\ref{s-AT} that $G_{u_{0,0}} 
\cong A_4$ or $S_4$. Thus $|G^+| \le 48 n  < |C|^2$ because of  
$n > 48$. By Proposition~\ref{L}, $G^+$ contains a normal subgroup 
$N$ such that $1 < N  \le C$. If $|N|$ is even, then $L$ is a characteristic 
subgroup of $N$, and we find that $L \triangleleft G$. 

Assume now that $|N|$ is odd. 
Let $\B$ be a minimal non-trivial block system 
for $G$, which is a refinement of the block system $\orb(N,V)$. 
It is easy to show that $\B=\orb(H,V)$ for some subgroup $1 < H \le N$. 

If $|H|=n/2$, then $G_{u_{0,0}}$ is a $2$-group due to Lemma~\ref{cyclic block}(i), 
which is impossible. Thus $|H| < n/2$, and by part (ii) of the same lemma, 
$$
\cdc(\G)/H \cong \cdc\big(R_k(\phi_{n,k}(a),\phi_{n,k}(r))\big),
$$ 
where $k=n/|H|$. As $|H|$ is odd, $k$ is even and the induction 
hypothesis can be applied to $\cdc\big(R_k(\phi_{n,k}(a),\phi_{n,k}(r))\big)$. 
Lemma~\ref{cyclic block}(iii) shows that $HL/H \triangleleft G/H$, whence 
$HL \triangleleft G$. This implies that $L \triangleleft G$, as claimed. 
\medskip

\noindent{\bf Case~2.} $G_{u_{0,0}}$ is a $2$-group.
\medskip

Let $T$ be a Sylow $2$-subgroup of $G^+$ such that $G_{u_{0,0}} \le T$. 
Then $\orb_T(u_{0,0})$ is a block for $G$ (see \cite[Theorem~1.5A]{DM}). 
Let $\Delta$ be a minimal block contained in the latter orbit such that 
$u_{0,0} \in \Delta$ and $\Delta \ne \{u_{0,0}\}$, and let $\B$ be the block system generated by $\Delta$. As $|\Delta|$ divides $|\orb_T(u_{0,0})|$, 
$|\Delta|$ is a power of $2$. Since $G_{u_{0,0}}$ is a 
$2$-group, we obtain that $\Delta$ contains a vertex $v$ such that 
$v \ne u_{0,0}$ and $G_{u_{0,0}}=G_v$. 
It is known that the subset $\Delta'$ of $V$, defined as 
$$
\Delta'=\big\{w : w \in V~\text{and}~G_w=G_{u_{0,0}} \big\}
$$
is a block for $G$ (see, e.g., \cite[Excercise~1.6.5]{DM}). Since $\Delta \cap \Delta'$ is also a block and 
$\Delta$ is minimal, we obtain $\Delta \subseteq \Delta'$. Using also 
that $\mu \in G_{u_{0,0}}$, see \eqref{eq:mu}, we obtain that $\Delta$ is one of 
the following sets:
\begin{equation}\label{eq:sets}
\{u_{0,0},u_{m,0}\},~\{u_{0,0},v_{b,0}\},~\text{and}~
\{u_{0,0},u_{m,0},v_{b,0},v_{b+m,0}\},
\end{equation}
where $a  \equiv -2b\!\!\pmod n$.
If $\Delta=\{u_{0,0},u_{m,0}\}$, then $\B=\orb(L,V)$. 
By Lemma~\ref{cyclic block}(ii), $L \triangleleft G$. Therefore, to finish the proof it 
sufficient to exclude the last two sets in \eqref{eq:sets}.

Assume first that 
$$
\Delta=\{u_{0,0},u_{m,0},v_{b,0},v_{b+m,0}\}.
$$ 
Since $\Delta$ is a minimal block for $G$, it follows that the 
setwise stabilizer $G_{\{\Delta\}}$ acts primitively on 
$\Delta$. Denote by $K$ and $(G_{\{\Delta\}})^*$ the kernel and  
the image, respectively, of the action of 
$G_{\{\Delta\}}$ on $\Delta$. We have that 
$(G_{\{\Delta\}})^* \cong G_{\{\Delta\}}/K$. 
As $G_{u_{0,0}}$ is a $2$-group, $G_{\{\Delta\}}$ is also a $2$-group. Therefore, 
there exists a subgroup $M \le G_{\{\Delta\}}$ such that $K < M$, $|M/K|=2$ and 
$M/K < Z(G_{\{\Delta\}}/K)$. We obtain that $M \lhd G_{\{\Delta\}}$ and there are two $M$-orbits on $\Delta$, 
each of size $2$. This implies that $G_{\{\Delta\}}$ is imprimitive on $\Delta$, a contradiction.  

Assume second that $\Delta=\{u_{0,0},v_{b,0}\}$.  
Consider the induced subgraph $\cdc(\G)[\Delta_1 \cup \Delta_2]$, where 
$\Delta_1, \Delta_2 \in \B$ and $\Delta_1$ and $\Delta_2$ are adjacent in  
$\G/\B$. Denote the latter subgraph by $\Sigma$. 
As $\cdc(\G)$ is arc-transitive, %see Proposition~\ref{not 3-AT}, 
see the remark after Hypothesis~\ref{hypo2}, 
$\Sigma$ does not %dependent 
depend on the choice of $\Delta_1$ and $\Delta_2$. There are three options for 
$\Sigma$, namely, $\Sigma \cong \K_2$ or $2\K_2$ or $\C_4$.  
Choosing $\Delta_1=\Delta=\{u_{0,0},v_{b,0}\}$ and  
$\Delta_2=(\beta\rho^{-b})(\Delta)=\{u_{-b,1},v_{0,1}\}$, one can observe that 
$\Sigma \cong 2\K_2 $ or $\C_4$.  Now choosing $\Delta_2$ to be 
$(\beta\rho)(\Delta)$, we conclude that 
$\{v_{b,0},v_{b+1,1}\}$ is an edge, whence $r \equiv \pm 1\!\!\pmod n$, 
in particular, $r$ is odd. 
On the other hand, $a \equiv -2b\!\!\pmod n$, so $a$ is even. 
By Lemma~\ref{bipartite}, $\G$ is bipartite, which is a contradiction.
\end{proof}

We describe next a situation, which we will encounter a couple of times 
in the rest of the paper. Suppose that $d$ is an odd divisor of $m$ such that 
$$
d > 3,~\sg{\rho^d} \triangleleft G,~\phi_{n,d}(a)=\pm 2~\text{and}~
\phi_{n,d}(r)=\pm 1.
$$ 
Let $\Sigma=\cdc\big(R_d\big(\phi_{n,d}(a),\phi_{n,d}(r)\big)\big)$ and let 
$$
\varepsilon=\begin{cases}
1 & \text{if}~\phi_{n,d}(a)=-2, \\
-1 & \text{if}~\phi_{n,d}(a)=2.
\end{cases}   
$$
By Lemma~\ref{cyclic block}(iii), 
%$\cdc(\G)/\sg{\rho^d} \cong \Sigma$. 
$\phi^*_{n,d}$ is a homomorphism from $\cdc(\G)$ onto $\Sigma$, where 
$\phi^*_{n,d}$ is defined in \eqref{eq:phi*}. Observe that, for every $i \in \Z_d$ and 
$j \in \Z_2$, $v_{i+\varepsilon,j}$ is the unique vertex of $\Sigma$ for which 
$$
\Sigma(u_{i,j})=\Sigma(v_{i+\varepsilon,j}).
$$
This shows that sets $\{u_{i,j},v_{i+\varepsilon,j}\}$, $i \in \Z_d$, $j \in \Z_2$, form 
a block system for $\aut(\Sigma)$.  Consequently, the sets 
\begin{eqnarray}
\Delta_{i,j}&:=&\big\{ (\phi^*_{n,d})^{-1}(u_{i,j}), (\phi^*_{n,d})^{-1}(v_{i+\varepsilon,j}) \big\} 
\label{eq:Delta} \\ 
&=& \big\{ u_{i+xd,j}, v_{i+xd+\varepsilon,j} : 
0 \le x \le n/d-1 \big\},~0 \le i \le d-1~\text{and}~j=0,1, \nonumber
\end{eqnarray}
form a block system $\B$ for $G$. Note that %, denoted this block system by $\B$,  
the quotient graph $\cdc(\G)/\B$ is a cycle of length $2d$. 
\medskip

In the next lemma we exclude certain possibilities for $(a,r)$.

\begin{lem}\label{L6} 
Assuming Hypothesis~\ref{hypo2}, suppose that $m=2l$ for some odd number $l$. 
Then 
$$
(a,r) \notin \big\{\, (l+2,l+1), (l+2,l-1), (l-2,l+1), (l-2,l-1)\, \big\}.
$$ 
\end{lem}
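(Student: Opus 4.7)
The plan is to argue by contradiction; assume $(a,r) \in \{(l \pm 2, l \pm 1)\}$. In all four cases $a$ is odd and $r$ is even (since $l$ is odd), and reducing modulo $l$ gives $\phi_{n,l}(a) \equiv \pm 2$ and $\phi_{n,l}(r) \equiv \pm 1 \pmod{l}$. This matches the unified setup described in the paragraph immediately preceding this lemma with $d = l$; once the two preconditions $d > 3$ and $\sg{\rho^l} \triangleleft G$ are verified, that machinery produces a block system $\B$ of $G$ with quotient $\cdc(\G)/\B \cong C_{2l}$ and blocks $\Delta_{i,j}$ of size $8$.

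I would dispose of the case $l = 3$ separately by a direct {\sc Magma} computation on the four graphs $R_{12}(5,4)$, $R_{12}(5,2)$, $R_{12}(1,4)$, $R_{12}(1,2)$, analogous to the check in the proof of Lemma~\ref{L5}. For $l \ge 5$, the main technical step is to establish $\sg{\rho^l} \triangleleft G$. Lemma~\ref{L5} already gives that $L = \sg{\rho^{2l}}$ is central in $G$, and the image of $\rho^l$ in $G/L$ is an involution. Using Lemma~\ref{cyclic block} to identify $\cdc(\G)/L$ as the canonical double cover of a smaller Rose Window graph $R_{2l}(a',r')$ on which $G/L$ acts, and applying Proposition~\ref{s-AT} to constrain the vertex stabilizer, I would run a centralizer computation to show the image of $\rho^l$ lies in $Z(G/L)$, which gives $\sg{\rho^l} \triangleleft G$.

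With the block system $\B$ in hand, a direct calculation from the explicit form of $(a,r)$ shows that the bipartite subgraph induced between any two adjacent blocks $\Delta_{i,j}$ and $\Delta_{i+1,j+1}$ is $2$-regular on $16$ vertices. Edge-transitivity of $\cdc(\G)$ then forces strong rigidity on the kernel $K$ of the action $G \to \aut(\cdc(\G)/\B) \le D_{4l}$; tracking how $K$ acts inside each block and how the rim, hub, and spoke cycles interlock across adjacent blocks, I would reach one of two impossibilities: either two distinct vertices of $\G$ end up with identical neighborhoods (contradicting non-trivial instability), or every automorphism of $\cdc(\G)$ commutes with the swap $\beta$ of~\eqref{eq:beta}, so by Lemma~\ref{cor-HM} all automorphisms are expected and $\G$ is stable (contradicting Hypothesis~\ref{hypo2}).

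The main obstacle is the second step: Lemma~\ref{L5} only supplies normality of the index-two subgroup $L = \sg{\rho^{2l}}$, and pushing normality down one further factor of two requires a centralizer argument based on Proposition~\ref{s-AT} that is genuinely new input relative to what the paper has so far established. Once $\sg{\rho^l} \triangleleft G$ and the block system are available, the final combinatorial contradiction is delicate but proceeds by explicit edge- and vertex-tracking rather than any new structural insight.
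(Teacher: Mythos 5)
Your skeleton (argue by contradiction, dispose of $l=3$ by computer, prove $\sg{\rho^l}\lhd G$, invoke the block system $\Delta_{i,j}$ with $d=l$, derive a combinatorial contradiction) matches the paper's, but both substantive steps are left unexecuted, and the routes you sketch for them diverge from arguments that actually close.

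First, the normality of $\sg{\rho^l}$ does not require the ``genuinely new'' centralizer computation you describe. Since $1\le a,r\le m$, Lemma~\ref{cyclic block}(ii) gives $\cdc(\G)/L\cong\cdc(R_{2l}(a,r))$ with the parameters unchanged, and $R_{2l}(a,r)$ is again a Rose Window graph with even index whose canonical double cover is edge-transitive by Lemma~\ref{cyclic block}(iv); so Lemma~\ref{L5} applies verbatim to the quotient and yields $\sg{\rho^l}/L\lhd G/L$, hence $\sg{\rho^l}\lhd G$. Your alternative --- constraining $G/L$ via Proposition~\ref{s-AT} and then showing the involution $\rho^lL$ is central --- is not carried out, and it is not clear how Proposition~\ref{s-AT} alone (which only bounds a vertex stabilizer) would force centrality; the observation you are missing is that Lemma~\ref{L5} can simply be re-applied to the quotient graph.

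Second, the endgame. The dichotomy you predict (repeated neighborhoods, or every automorphism commuting with $\beta$) is not what occurs. The paper splits the four pairs according to whether the induced subgraph between adjacent blocks is $4\C_4$ (pairs $(l+2,l+1)$, $(l-2,l-1)$) or $2\C_8$ (pairs $(l+2,l-1)$, $(l-2,l+1)$) --- a distinction your ``$2$-regular on $16$ vertices'' observation erases, but which drives two quite different arguments. In the $4\C_4$ case the contradiction is a direct adjacency violation: the automorphism $\sigma\in G_{u_{0,0}}$ swapping $u_{1,1}$ and $v_{0,1}$ is forced to send the edge $\{u_{l+1,1},v_{-1,0}\}$ to a non-edge. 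In the $2\C_8$ case the argument is a periodicity computation: the restrictions $\sigma_k$ of $\sigma\in K$ to consecutive blocks satisfy $\sigma_i=\sigma_j$ if and only if $i\equiv j\pmod{12}$, which is incompatible with $\sigma_0=\sigma_{2l}$ for odd $l$ unless $\sigma_0$ is the identity; this pins $|G_{u_{0,0}}|=4$, and one then needs Proposition~\ref{KKM1} to see that a nontrivial element of $K_{u_{0,0}}$ would force $3\mid l$ and make $\G=R_{12s}(3s+2,3s-1)$ edge-transitive, hence stable. That last step takes the classification of edge-transitive Rose Window graphs as input, which your sketch never invokes. As written, the proposal does not contain a proof of either case.
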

\begin{proof}
On the contrary, assume that $(a,r)$ is one of the pairs given in the lemma. 
By Lemma~\ref{L5}, $L \lhd G$. As $0 < a, r \le m$, 
$\phi_{n,m}(a)=a$ and $\phi_{n,m}(r)=r$, and hence 
$\cdc(\G)/L \cong \cdc(R_m(a,r))$ due to 
Lemma~\ref{cyclic block}(ii). Applying Lemma~\ref{L5} to $\cdc(R_m(a,r))$, 
we obtain $\sg{\rho^l}/L \lhd G/L$, and hence $\sg{\rho^l} \lhd G$. 

If $l=3$, then using the computer algebra package 
{\sc Magma}~\cite{BCP}, we find that if $(a,r)=(l+2,l+1)=(5,4)$ or 
$(a,r)=(l-2,l-1)=(1,2)$, then $\cdc(\G)$ is not edge-transitive; and if 
$(a,r)=(l+2,l-1)=(5,2)$ or $(a,r)=(l-2,l+1)=(1,4)$, then $\cdc(\G)$ is edge-transitive, 
but $\G$ is stable. Both cases are impossible, hence in the rest of the proof we 
assume that $l > 3$. Then $G$ admits the block system $\B$ with blocks $\Delta_{i,j}$ defined 
in \eqref{eq:Delta} with $d=l$. Let $K$ be the kernel of the action of $G$ on $\B$.  
 \medskip

\noindent{\bf Case~1.}~$(a,r)=(l+2,l+1)$ or $(l-2,l-1)$. 
\medskip

We discuss only the case when $(a,r)=(l+2,l+1)$ because the 
other case can be handled by copying the same argument. For $i \in \Z_l$ and $j \in \Z_2$,  
$$
\cdc(\G)[\Delta_{i,j} \cup \Delta_{i+1,j+1}] \cong 4\C_4.
$$ 
Clearly, the automorphisms in $K$ permute these $4$-cycles. 
Consider the $4$-cycles $X_1, X_2, X_3$ and $X_4$ shown in Figure~\ref{fig2}.  
Choose $\sigma \in G_{u_{0,0}}$ such that $\sigma$ swaps   
$u_{1,1}$ and $v_{0,1}$. Then $\sigma \in K$, it maps each of $X_1$ and 
$X_2$ to itself, and it swaps $X_3$ and $X_4$. Figure~\ref{fig2} shows us that 
$\sigma(u_{l+1,1})=v_{3l,1}$. 
On the other hand, as $X_1$ is preserved by $\sigma$, $\sigma(v_{-1,0})=v_{-1,0}$. 
These give rise to a contradiction because $u_{l+1,1} \sim v_{-1,0}$ but  
$v_{3l,1} \not\sim v_{-1,0}$. 
\medskip

\begin{figure}[t]
\begin{center}
\begin{tikzpicture}[scale=1.25] ---
% vertices 
\fill (0,2) circle (1.5pt); 
\fill (0,3) circle (1.5pt);
\fill (1,1) circle (1.5pt);
\fill (1,2) circle (1.5pt); 
\fill (1,3) circle (1.5pt); 
\fill (2,0) circle (1.5pt); 
\fill (2,1) circle (1.5pt); 
\fill (2,2) circle (1.5pt); 
\fill (2,3) circle (1.5pt); 
\fill (3,0) circle (1.5pt);
\fill (3,1) circle (1.5pt);
\fill (3,2) circle (1.5pt);
\fill (3,3) circle (1.5pt);
\draw (0,2) node[above]{\footnotesize $u_{-1,1}$};
\draw (0,3) node[above]{\footnotesize $v_{3l-2,1}$};
\draw (1,1) node[above]{\footnotesize $v_{3l-1,0}$};
\draw (1,2) node[above]{\footnotesize $u_{0,0}$};
\draw (1,3) node[above]{\footnotesize $v_{-1,0}$};
\draw (2,0) node[above]{\footnotesize $u_{l+1,1}$};
\draw (2,1) node[above]{\footnotesize $v_{0,1}$};
\draw (2,2) node[above]{\footnotesize $u_{1,1}$};
\draw (2,3) node[above]{\footnotesize $v_{3l,1}$};
\draw (3,0) node[above]{\footnotesize $u_{l+2,0}$};
\draw (3,1) node[above]{\footnotesize $v_{l+1,0}$};
\draw (3,2) node[above]{\footnotesize $u_{2,0}$};
\draw (3,3) node[above]{\footnotesize $v_{1,0}$};
% 4-cycles
\draw (0.5,2) node[below]{\footnotesize $X_1$};
\draw (1.5,1) node[below]{\footnotesize $X_2$};
\draw (2.5,2) node[below]{\footnotesize $X_3$};
\draw (2.5,0) node[below]{\footnotesize $X_4$};
% edges
\draw (0,3) -- (3,0) -- (2,0) -- (3,1) -- (1,1) -- (3,3) -- (2,3) -- (3,2) -- (0,2) -- (1,3) -- (0,3);
\draw (1,3) -- (2,0);
% blocks
\draw[gray] (-0.3,-0.3) -- (0.3,-0.3) -- (0.3,3.5) -- (-0.3,3.5) -- (-0.3,-0.3); 
\draw (0,-0.5) node[below]{\footnotesize $\Delta_{-1,1}$};
\draw[gray, ] (0.7,-0.3) -- (1.3,-0.3) -- (1.3,3.5) -- (0.7,3.5) -- (0.7,-0.3); 
\draw (1,-0.5) node[below]{\footnotesize $\Delta_{0,0}$};
\draw[gray] (1.7,-0.3) -- (2.3,-0.3) -- (2.3,3.5) -- (1.7,3.5) -- (1.7,-0.3); 
\draw (2,-0.5) node[below]{\footnotesize $\Delta_{1,1}$};
\draw[gray] (2.7,-0.3) -- (3.3,-0.3) -- (3.3,3.5) -- (2.7,3.5) -- (2.7,-0.3); 
\draw (3,-0.5) node[below]{\footnotesize $\Delta_{2,0}$};
\end{tikzpicture}
\caption{The 4-cycles $X_1, X_2, X_3$ and $X_4$ in the graph 
$\cdc(R_{4l}(l+2,l+1))$.} 
\label{fig2}
\end{center}
\end{figure}
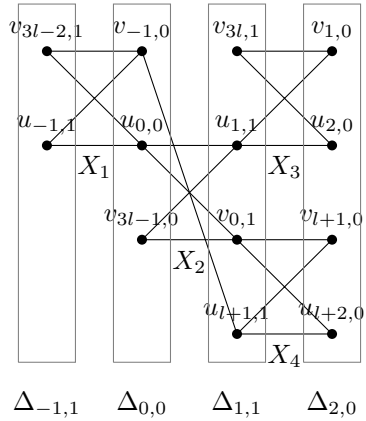

\noindent{\bf Case~2.}~$(a,r)=(l+2,l-1)$ or $(l-2,l+1)$. 
\medskip

We discuss only the case when $(a,r)=(l+2,l-1)$ because the 
other case can be handled in the same way. For $i \in \Z_l$ and $j \in \Z_2$,  
$$
\cdc(\G)[\Delta_{i,j} \cup \Delta_{i+1,j+1}] \cong 2\C_8.
$$ 

The  subgraph of $\cdc(\G)$ induced by the set 
$\Delta_{0,0} \cup\cdots\cup \Delta_{12,0}$ is shown in Figure~\ref{fig3}.  
Label the vertices in the blocks $\Delta_{i,j}$ with numbers from $1$ to $8$ as 
shown in the figure.  For $\sigma \in K$ and for an integer $k \ge 0$, 
let $\sigma_k$ be the permutation of $\{1,\ldots,8\}$ induced by the restriction of $\sigma$ to $\Delta_{i,j}$, where 
$k \equiv i\!\!\pmod l$ and $k \equiv j\!\!\pmod 2$. Note that $\sigma_i=\sigma_j$ 
holds if $i \equiv j\!\! \pmod {2l}$. 
The $8$-cycles intersect at either no vertex or a
pair of antipodal vertices. Moreover, each $8$-cycle between $\Delta_{i,j}$ and 
$\Delta_{i+1,j+1}$ has every other vertex in $\Delta_{i,j}$. These imply that 
$\sigma_{i+1}$ is determined by 
$\sigma_i$. 

Choose $\sigma \in G_{u_{0,0}} \cap G_{u_{1,1}}$. Clearly, $\sigma \in K$ and  
$$
\sigma_0(i)=\sigma_1(i)=i~\text{if}~1 \le i \le 4.
$$
Denote by $X_1$ and $X_2$ the two $8$-cycles between $\Delta_{1,1}$ and 
$\Delta_{2,0}$ such that $u_{1,1} \in \vertex(X_1)$. Then $\sigma$ maps 
each of $X_1$ and $X_2$ to itself, and it is easy to see that it acts on these cycles 
either as the identity or a specific reflection. Denote by $X_3$ the $8$-cycle between 
$\Delta_{0,0}$ and $\Delta_{1,1}$ not through $u_{0,0}$. It follows that 
a vertex of $X_3$ is either fixed by $\sigma$ or it is switched with the vertex 
antipodal to it in $X_3$. This implies that $\sigma$ acts on $X_3$ either as the identity or the reflection with respect to the center of $X_3$. Therefore,  
$\sigma_0$ is the identity permutation or $\sigma_0=(5,7)(6,8)$.  

Assume for the moment that $\sigma_0=(5,7)(6,8)$. 
Then 
$$
\sigma_0=\sigma_1~\text{and}~\sigma_2=(1,6)(2,5)(3,8)(4,7).
$$
A tedious, although straightforward computation, yields that \\ [+1ex]
$\sigma_3=(1,5)(2,8)(3,7)(4,6),~\sigma_4=(1,5)(2,8)(3,7)(4,6),~
\sigma_5=(1,8)(2,7)(3,6)(4,5)$, \\ [+1ex]
$\sigma_6=\sigma_7=(1,3)(2,4),~
\sigma_8=(1,8)(2,7)(3,6)(4,5),~\sigma_9=(1,7)(2,6)(3,5)(4,8)$,\\ [+1ex]
$\sigma_{10}=(1,7)(2,6)(3,5)(4,8),~\sigma_{11}=\sigma_2,~\sigma_{12}=\sigma_0$. 
\medskip

\noindent
This shows that $\sigma_i=\sigma_j$ if and only if $i \equiv j\!\!\pmod {12}$.  
On the other hand, $\sigma_0=\sigma_{2l}$, contradicting our assumption that 
$l$ is odd. We conclude that $\sigma_0$ must be the identity, implying that 
$G_{u_{0,0}} \cap G_{u_{1,1}}=1$, whence $|G_{u_{0,0}}|=4$. 

It can be shown in the same way as above that the existence of a non-identity automorphism in $K_{u_{0,0}}$ forces   
that $3$ divides $l$. But then $\G=R_{12s}(3s+2,3s-1)$, where $l=3s$, and therefore, 
it is edge-transitive due to Proposition~\ref{KKM1}. This and the fact that 
$|G_{u_{0,0}}|=4$ show that $\G$ is stable, a contradiction. 
\end{proof}

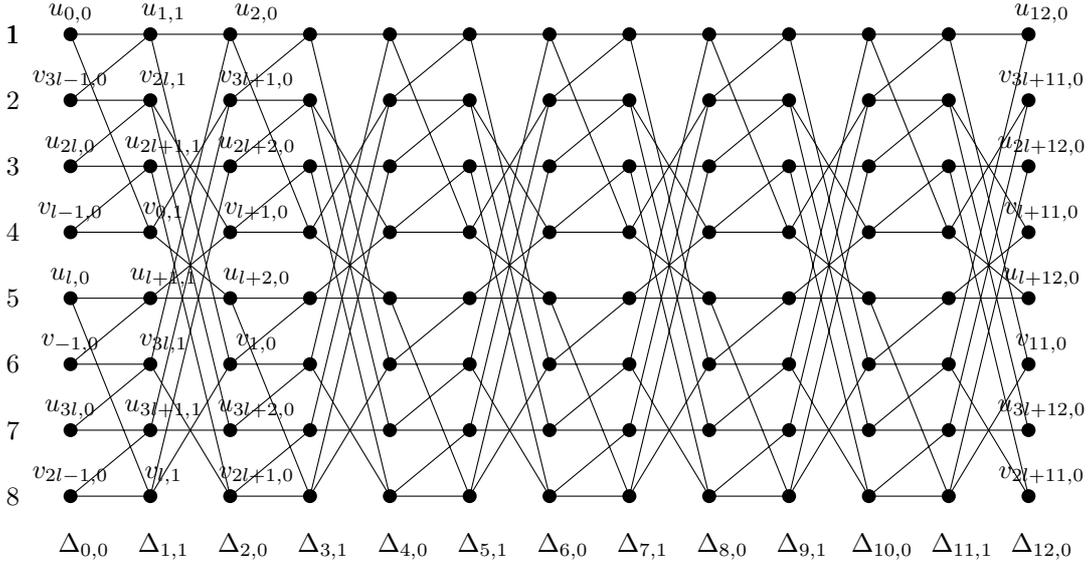
\begin{figure}[t]
\begin{center}
\begin{tikzpicture}[scale=1.75] ---
%labels
\foreach \x in {0,0.5,1,1.5,2,2.5,3,3.5}
\draw (-0.3,3.5) node[left]{\footnotesize $1$};
\draw (-0.3,3) node[left]{\footnotesize $2$};
\draw (-0.3,2.5) node[left]{\footnotesize $3$};
\draw (-0.3,2) node[left]{\footnotesize $4$};
\draw (-0.3,1.5) node[left]{\footnotesize $5$};
\draw (-0.3,1) node[left]{\footnotesize $6$};
\draw (-0.3,0.5) node[left]{\footnotesize $7$};
\draw (-0.3,0) node[left]{\footnotesize $8$};
%vertices
\foreach \x in {0,0.6,1.2,1.8,2.4,3,3.6,4.2,4.8,5.4,6,6.6,7.2}
\foreach \y in {0,0.5,1,1.5,2,2.5,3,3.5}
\fill (\x,\y) circle (1.5pt); 
\draw (0,3.5) node[above]{\footnotesize $u_{0,0}$};
\draw (0,3) node[above]{\footnotesize $v_{3l-1,0}$};
\draw (0,2.5) node[above]{\footnotesize $u_{2l,0}$};
\draw (0,2) node[above]{\footnotesize $v_{l-1,0}$};
\draw (0,1.5) node[above]{\footnotesize $u_{l,0}$};
\draw (0,1) node[above]{\footnotesize $v_{-1,0}$};
\draw (0,0.5) node[above]{\footnotesize $u_{3l,0}$};
\draw (0,0) node[above]{\footnotesize $v_{2l-1,0}$};
\draw (0.7,3.5) node[above]{\footnotesize $u_{1,1}$};
\draw (0.7,3) node[above]{\footnotesize $v_{2l,1}$};
\draw (0.7,2.5) node[above]{\footnotesize $u_{2l+1,1}$};
\draw (0.7,2) node[above]{\footnotesize $v_{0,1}$};
\draw (0.7,1.5) node[above]{\footnotesize $u_{l+1,1}$};
\draw (0.7,1) node[above]{\footnotesize $v_{3l,1}$};
\draw (0.7,0.5) node[above]{\footnotesize $u_{3l+1,1}$};
\draw (0.7,0) node[above]{\footnotesize $v_{l,1}$};
\draw (1.4,3.5) node[above]{\footnotesize $u_{2,0}$};
\draw (1.4,3) node[above]{\footnotesize $v_{3l+1,0}$};
\draw (1.4,2.5) node[above]{\footnotesize $u_{2l+2,0}$};
\draw (1.4,2) node[above]{\footnotesize $v_{l+1,0}$};
\draw (1.4,1.5) node[above]{\footnotesize $u_{l+2,0}$};
\draw (1.4,1) node[above]{\footnotesize $v_{1,0}$};
\draw (1.4,0.5) node[above]{\footnotesize $u_{3l+2,0}$};
\draw (1.4,0) node[above]{\footnotesize $v_{2l+1,0}$};
\draw (7.3,3.5) node[above]{\footnotesize $u_{12,0}$};
\draw (7.3,3) node[above]{\footnotesize $v_{3l+11,0}$};
\draw (7.3,2.5) node[above]{\footnotesize $u_{2l+12,0}$};
\draw (7.3,2) node[above]{\footnotesize $v_{l+11,0}$};
\draw (7.3,1.5) node[above]{\footnotesize $u_{l+12,0}$};
\draw (7.3,1) node[above]{\footnotesize $v_{11,0}$};
\draw (7.3,0.5) node[above]{\footnotesize $u_{3l+12,0}$};
\draw (7.3,0) node[above]{\footnotesize $v_{2l+11,0}$};
% 8-cycles of type 1
\foreach \x in {0,1.2,2.4,3.6,4.8,6}
\draw (\x,0) -- (\x+0.6,0.5) -- (\x,0.5) -- (\x+0.6,1) -- (\x,1) -- (\x+0.6,1.5) -- (\x,1.5) -- (\x+0.6,0) -- (\x,0); 
\foreach \x in {0,1.2,2.4,3.6,4.8,6}
\draw (\x,2) -- (\x+0.6,2.5) -- (\x,2.5) -- (\x+0.6,3) -- (\x,3) -- (\x+0.6,3.5) -- (\x,3.5) -- (\x+0.6,2) -- (\x,2); 
% 8-cycles of type 2
\draw (0.6,3.5) -- (1.2,3.5) -- (0.6,1) -- (1.2,0) -- (0.6,2.5) -- (1.2,2.5) 
-- (0.6,0) -- (1.2,1) -- (0.6,3.5);
\draw (0.6,3) -- (1.2,2) -- (0.6,1.5) -- (1.2,1.5) -- (0.6,2) -- (1.2,3) 
-- (0.6,0.5) -- (1.2,0.5) -- (0.6,3);
\foreach \x in {1.8,3,4.2,5.4,6.6} 
\draw (\x,3.5) -- (\x+0.6,3.5) -- (\x,1) -- (\x+0.6,0) -- (\x,2.5) -- (\x+0.6,2.5) 
-- (\x,0) -- (\x+0.6,1) -- (\x,3.5);
\foreach \x in {1.8,3,4.2,5.4,6.6} 
\draw (\x,3) -- (\x+0.6,2) -- (\x,1.5) -- (\x+0.6,1.5) -- (\x,2) -- (\x+0.6,3) 
-- (\x,0.5) -- (\x+0.6,0.5) -- (\x,3);
%blocks
\draw (0.1,-0.2) node[below]{\footnotesize $\Delta_{0,0}$};
\draw (0.7,-0.2) node[below]{\footnotesize $\Delta_{1,1}$};
\draw (1.3,-0.2) node[below]{\footnotesize $\Delta_{2,0}$};
\draw (1.9,-0.2) node[below]{\footnotesize $\Delta_{3,1}$};
\draw (2.5,-0.2) node[below]{\footnotesize $\Delta_{4,0}$};
\draw (3.1,-0.2) node[below]{\footnotesize $\Delta_{5,1}$};
\draw (3.7,-0.2) node[below]{\footnotesize $\Delta_{6,0}$};
\draw (4.3,-0.2) node[below]{\footnotesize $\Delta_{7,1}$};
\draw (4.9,-0.2) node[below]{\footnotesize $\Delta_{8,0}$};
\draw (5.5,-0.2) node[below]{\footnotesize $\Delta_{9,1}$};
\draw (6.1,-0.2) node[below]{\footnotesize $\Delta_{10,0}$};
\draw (6.7,-0.2) node[below]{\footnotesize $\Delta_{11,1}$};
\draw (7.3,-0.2) node[below]{\footnotesize $\Delta_{12,0}$};
\end{tikzpicture}
\caption{The subgraph of the canonical double cover $\cdc(R_{4l}(l+2,l-1))$ induced by the set 
$\Delta_{0,0} \cup\cdots\cup \Delta_{12,0}$. }
%The $8$-cycles $X_1$ and $X_2$ are colored red and blue.
\label{fig3}
\end{center}
\end{figure}

\begin{lem}\label{L7} 
Assuming Hypothesis~\ref{hypo2}, suppose that $m$ is even. Then $\G/L$ is unstable.
\end{lem}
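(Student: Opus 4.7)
The plan is to argue by contradiction: assume $\G/L$ is stable and force $\G/L$ into one of the finitely many edge-transitive Rose Window graphs of Proposition~\ref{KKM1}, then check that none of the lifts back to $\G$ is compatible with $\G$ being non-trivially unstable. First I would dispose of the boundary possibilities $a=m$ or $r=m$: these would make $\phi_{n,m}(a)$ or $\phi_{n,m}(r)$ vanish, so that $\G/L$ fails to be a genuine Rose Window graph; a brief direct analysis leveraging the edge-transitivity of $\cdc(\G)$ in Hypothesis~\ref{hypo2} rules each boundary case out (consistently with the final Theorem~\ref{main}(iii), whose conclusion W2 $\cup$ W4 contains no graph with $a=m$). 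So I may take $a,r<m$ and identify $\G/L=R_m(a,r)$.

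By Lemma~\ref{cyclic block}(ii), applied to $L$ (which is normal in $G$ by Lemma~\ref{L5}), $\cdc(\G)/L\cong\cdc(\G/L)$; by part~(iv) of the same lemma, $\cdc(\G/L)$ is edge-transitive under its full automorphism group. Under the stability assumption, $\aut(\cdc(\G/L))=\aut(\G/L)\times\sg{\beta'}$, where $\beta'$ is the canonical layer-swap. Since $\beta'$ merely pairs $\{(u,0),(v,1)\}$ with $\{(u,1),(v,0)\}$, both projecting to the same edge $\{u,v\}$ of $\G/L$, this edge-transitivity descends to $\G/L$ itself.

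Applying Proposition~\ref{KKM1} to $\G/L$ and writing $m=2l$, I would then enumerate the possible lifts $(a,r)$ with $1\le a,r\le m$, $a\equiv a' \pmod{m}$, $r\equiv r' \pmod{m}$, across the four families:
\begin{itemize}
\item In families~(a), (d), and (b) with $l$ even, the normalized quotient parameters satisfy $a'$ even and $r'$ odd. Because $m$ is even, the difference $a-a'$ (resp.\ $r-r'$) is even and these parities persist in the lift, so Lemma~\ref{bipartite}(iii) forces $\G$ to be bipartite --- contradicting non-trivial instability.
\item In family~(b) with $l$ odd, the lifts are exactly the four pairs $(a,r)\in\{(l\pm 2,l\pm 1)\}$ excluded by Lemma~\ref{L6}.
\item In family~(c) with $m=12k$, the case $k$ even again makes $\G$ bipartite by the same parity argument, while the case $k$ odd yields the lift $R_{24k}(3k\pm 2,3k\mp 1)$ (up to the isomorphisms of Proposition~\ref{iso}) and requires a direct structural argument.
\end{itemize}

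The main obstacle is the family~(c), $k$-odd subcase, which neither the parity test nor Lemma~\ref{L6} covers. To close it I would directly inspect the adjacency relations of $R_{24k}(3k\pm 2,3k\mp 1)$ and show that either $\cdc(\G)$ is not edge-transitive (contradicting Hypothesis~\ref{hypo2}) or $\G$ possesses a pair of twin vertices (contradicting non-trivial instability). This ad hoc step, rather than any of the earlier uniform arguments, is where the genuine combinatorial work of the lemma lies.
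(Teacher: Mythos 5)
Your overall strategy coincides with the paper's: assume $\G/L$ stable, deduce that $R_m(a,r)$ is edge-transitive, run it through Proposition~\ref{KKM1}, kill families (a) and (d) (and the even-parameter subcases) by the bipartiteness criterion of Lemma~\ref{bipartite}, and kill family (b) with $l$ odd by Lemma~\ref{L6}. Up to that point your proposal is sound and is essentially the paper's argument.

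However, there is a genuine gap at exactly the place you flag yourself: family (c) with the quotient parameters $(3k\pm 2, 3k\mp 1)$, $k$ odd, lifting to $\G=R_{24k}(a,r)$ with $(a,r)$ one of four odd-$a$ pairs. Your proposed closing move --- ``inspect the adjacency relations and show that either $\cdc(\G)$ is not edge-transitive or $\G$ has twin vertices'' --- is not a proof, and the intended dichotomy does not hold as stated. These graphs have no twin vertices (a twin of $u_0$ would force $a\equiv\pm 2\pmod n$, which fails for $a=3k\pm 2$ or $9k\pm 2$ with $n=24k$), and non-edge-transitivity of $\cdc(\G)$ is not something one can read off from adjacencies; indeed the paper's argument works \emph{inside} the standing hypothesis that $\cdc(\G)$ is edge-transitive and derives the contradiction elsewhere. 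What the paper actually does here is group-theoretic: it verifies $r^2\not\equiv 1\pmod n$ so that Propositions~\ref{DKM1}--\ref{DKM3} and \ref{KKM1} give $|\aut(\G)|=2n$; it shows (using $M=\sg{\rho^m,\beta}\lhd G$, $\rho^m\in Z(G)$ from Lemma~\ref{L5}, and $\beta^G=\{\beta,\beta\rho^m\}$) that the expected group $C_G(M)=\sg{\rho,\mu,\beta}$ has index $2$ in $G$; and then it shows that an automorphism $\sigma\in G_{u_{0,0}}$ with $\sigma(u_{1,1})=v_{0,1}$ must normalize $C=\sg{\rho}$, whereupon the computation in the proof of Lemma~\ref{L4} forces $a$ to be even --- contradicting $a$ odd. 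None of this machinery appears in your sketch, and without it the lemma is not proved. (The boundary worry $a=m$ or $r=m$ you raise is comparatively minor and is absorbed by the same classification step, but it too is only gestured at.)
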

\begin{proof}
Let $M=\sg{\rho^m,\beta}$, where $\beta$ is the automorphism of $\cdc(\G)$ 
defined in \eqref{eq:beta}. 
On the contrary, assume that $\G/L$ is stable. It follows from this and 
Lemma~\ref{cor-HM} that $M \lhd G$. By Lemma~\ref{L5},  
%$L < Z(G)$. 
$\rho^m \in Z(G)$, implying that 
$C_G(M)=C_G(\beta)$. Consider the action of $G$ on itself by conjugaction. By the orbit-stabilizer lemma, 
$|G|=|C_G(\beta)|\cdot |\beta^G|$, where $\beta^G$ denotes the conjugacy class 
of $G$ containing $\beta$. Using also that $\beta \not \in Z(G)$ and $M \lhd G$, we find that $\beta^G=\{\beta,\beta\rho^m\}$, and therefore,  
$C_G(M)=C_G(\beta)$ has index $2$ in $G$. 
Note that the group $C_G(\beta)$ is the group of expected automorphisms of 
$\cdc(\G)$ due to Lemma~\ref{cor-HM}, in particular, 
\begin{equation}\label{eq:centralizer}
C_G(M) \cong \aut(\G) \times \Z_2.
\end{equation}

Since $\cdc(R_m(a,r))$ is edge-transitive and $R_m(a,r)$ is stable, it follows 
that $R_m(a,r)$ is edge-transitive. Let $\Sigma=R_m(a',r')$, where 
$a'=\min(a,m-a)$ and $r'=\min(r,m-r)$. Then $\Sigma$ is one of the graphs in 
the families (a)--(d) in Proposition~\ref{KKM1}.  If $R_m(a,r)$ is bipartite, then 
so is $\G=R_n(a,r)$, see Lemma~\ref{bipartite}, which is not the case. 
Thus $R_m(a,r)$ is not bipartite, which yields that it is 
neither in the family (a) nor in (d). Lemma~\ref{L6} implies that it cannot be in the family  
(b). Thus $\Sigma$ is in the family (c), i.e., $m=12l$ and  
$(a',r')=(3l+2,3l-1)$ or $(3l-2,3l+1)$. Then 
\begin{equation}\label{eq:(a,r)}
(a,r) \in \big\{\, (3l+2,3l-1),\,  (3l-2,3l+1),\, (9l-2,9l+1),\, (9l+2,9l-1)\, \big\}.
\end{equation}
Note that $l$ must be odd because $\G=R_n(a,r)$ is not bipartite. 
One can check that $r^2 \not\equiv 1\!\!\pmod n$. Combining this with Propositions~\ref{DKM1}--\ref{KKM1} yields 
$|\aut(\G)|=2n$. Using also \eqref{eq:centralizer}, we find  
$C_G(M)=\sg{\rho,\mu,\beta}$, where $\mu$ is the automorphism of $\cdc(\G)$ 
defined in \eqref{eq:mu}. 

Choose $\sigma \in G_{u_{0,0}}$ such that $\sigma(u_{1,1})=v_{0,1}$. 
Then $\sigma \notin C_G(M)$.  
Recall that $G^+$ is the subgroup of $G$ that preserves 
the biparts of $\cdc(\G)$. Then $G^+ \cap C_G(M)=\sg{\rho,\mu}$.  
Since $\sigma \in G^+$, it follows that $\sigma$ normalizes $\sg{\rho,\mu}$, and 
therefore, it normalizes also $C=\sg{\rho}$. The proof of Lemma~\ref{L4} shows 
that $a$ is even. This is in contradiction with \eqref{eq:(a,r)} and the fact that $l$ is odd.  
\end{proof}

We are ready to handle the case when $C$ is not normal in $G$. 

\begin{lem}\label{L8} 
Assuming Hypothesis~\ref{hypo2}, suppose that $C \ntriangleleft G$. Then 
$m$ is odd and $(a,r)=(m-2,m-1)$ or $(2,m-1)$.
\end{lem}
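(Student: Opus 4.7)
The argument proceeds by induction on $n = 2m$, the inductive hypothesis being Theorem~\ref{main} at smaller orders; small base cases can be verified by direct computation. The backbone is provided by Lemma~\ref{L5} (giving $\rho^m \in Z(G)$ and $L\triangleleft G$), Lemma~\ref{L7} ($\G/L$ is unstable when $m$ is even), and the quotient machinery of Lemma~\ref{cyclic block}, which gives $\cdc(\G)/L \cong \cdc(R_m(\phi_{n,m}(a),\phi_{n,m}(r))) =: \Sigma$ together with an epimorphism $\varphi\colon G/L \to \aut(\Sigma)$ under which $\rho L$ maps to the natural rotation of $\Sigma$, and with $\Sigma$ edge-transitive.

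\textbf{Step 1: $m$ is odd.} Suppose for contradiction that $m$ is even. Lemma~\ref{L7} gives that $R_m(a,r)$ is unstable. Dispose of the trivially unstable subcases directly: a bipartite $R_m(a,r)$ makes $\G$ bipartite via Lemma~\ref{bipartite}, contradicting non-triviality; and two vertices of $R_m(a,r)$ sharing a neighborhood lift to a pair of twins in $\G$ (because the quotient map is a fibration by the central involution $\rho^m$), again contradicting non-triviality. Hence $R_m(a,r)$ is non-trivially unstable. Since $\Sigma$ is edge-transitive, the inductive hypothesis (case~(iii) at order $m$) places $R_m(a,r)$ in W2 or W4. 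Extracting the explicit values of $(a\bmod m, r\bmod m)$ and lifting the normalizer of $\varphi(C/L)=\sg{\bar\rho}$ in $\aut(\Sigma)$ back through $\varphi$, one checks in each case that this normalizer sweeps out all of $G$, forcing $C\triangleleft G$, contradicting the standing hypothesis.

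\textbf{Step 2: Determining $(a,r)$.} Assume $m$ is odd. Choose $\sigma \in G$ with $\sigma C\sigma^{-1}\ne C$ and let $\rho':=\sigma\rho\sigma^{-1}$. Since $\rho^m\in Z(G)$, $(\rho')^m = \rho^m$, so $\rho$ and $\rho'$ generate two distinct cyclic subgroups of order $n$ sharing the same central involution. The four natural $\rho$-orbits on $V(\cdc(\G))$ are the strata $\{u_{i,0}\}$, $\{u_{i,1}\}$, $\{v_{i,0}\}$, $\{v_{i,1}\}$, while the four $\rho'$-orbits form a different partition. After normalizing $\sigma$ by composing with elements of $\sg{\rho,\mu,\beta}$, one may assume $\sigma(u_{0,0})=v_{0,0}$ and further pin down $\sigma(u_{1,1})$ to a neighbor of $v_{0,0}$; this leaves finitely many cases. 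In each case, tracing $\sigma$ through the rim, hub and spoke cycles --- in the style of the proofs of Lemmas~\ref{L3} and~\ref{L4} --- yields congruences of the form $r^2\equiv\pm 1\pmod m$ and $ra\equiv\pm a\pmod m$. Combining these with $1\le a,r\le m$, oddness of $m$, and the absence of twin vertices in $\G$, the only surviving solutions are $r=m-1$ together with $a\in\{2,m-2\}$.

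\textbf{Main obstacle.} The most delicate point is the conclusion of Step~1, ruling out $R_m(a,r)\in\mathrm{W}2\cup\mathrm{W}4$. Surjectivity of $\varphi$ alone does not imply $C\triangleleft G$ because $\ker\varphi$ may be non-trivial, so one must combine the explicit automorphism group structure of the edge-transitive graphs in $\mathrm{W}2$ and $\mathrm{W}4$ (via Proposition~\ref{KKM1}) with Lemma~\ref{cor-HM} to control lifts through $\varphi$ and thereby contradict $C\not\triangleleft G$.
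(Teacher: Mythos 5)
Your overall architecture (induction on $n$ with computational base cases, Lemma~\ref{L5}, Lemma~\ref{L7}, and the quotient by $L$ via Lemma~\ref{cyclic block}) matches the paper, but both of your decisive steps have genuine gaps. In Step~1, after placing $R_m(a,r)$ in W2 or W4 you need to contradict $C \ntriangleleft G$, and your proposed mechanism --- lifting the normalizer of $\varphi(C/L)$ and claiming it ``sweeps out all of $G$'' --- is exactly the point you flag as unresolved: since $\ker\varphi$ can be non-trivial, normality of $\sg{\bar\rho}$ in $\aut(\Sigma)$ gives you nothing about $C/L$ in $G/L$. The paper does something different: because $L \le C$ and $L \lhd G$, the hypothesis $C \ntriangleleft G$ passes to $C/L \ntriangleleft G/L$, so the induction hypothesis of \emph{this lemma} (not case~(iii) of Theorem~\ref{main}) applies to $R_m(a,r)$ and yields four explicit candidates $(a,r) \in \{(2,l\pm 1),(m-2,l\pm 1)\}$; each is then killed by a concrete analysis of the blocks $\Delta_{i,j}$ of \eqref{eq:Delta} and the induced $16$-cycles (Figure~\ref{fig4}), with no appeal to normality at all. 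A secondary problem in Step~1: to invoke any induction hypothesis you must show $R_m(a,r)$ is \emph{non-trivially} unstable, and your claim that twins in $R_m(a,r)$ ``lift to a pair of twins in $\G$'' is not valid (twins do not lift through quotients); the paper instead rules out twins by a direct neighborhood computation in $\cdc(R_m(a,r))$, using non-bipartiteness.

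Step~2 is the larger gap. The congruences $r^2 \equiv \pm 1 \pmod m$ and $ra \equiv \pm a \pmod m$ emphatically do \emph{not} force $r \equiv \pm 1 \pmod m$: the families W3, W8 and W9 are built precisely from solutions of such congruences with $r \not\equiv \pm 1$, so no amount of cycle-chasing of this kind, combined only with the no-twins condition, can isolate $r = m-1$. The input that actually closes the argument in the paper is the classification of edge-transitive Rose Window graphs (Proposition~\ref{KKM1}) applied to $\cdc(\G)/L \cong \cdc(R_m(a,r)) \cong R_{2m}(a',r')$ with $a',r'$ as in \eqref{eq:a'r'}: families (b) and (c) are excluded by parity of $r''$ and of $m$, family (d) is excluded because its normal subgroup $H$ would force $C \lhd G$, and family (a) leaves $a \in \{2,m-2\}$, $r \in \{1,m-1\}$. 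Even then one candidate, $(a,r)=(m-2,1)$, survives all congruence conditions and must be eliminated by a separate block argument (the restriction permutations $\sigma_k$ have period $4$ in $k$ while they must have period dividing $2m$ with $m$ odd). Your proposal contains neither the appeal to Proposition~\ref{KKM1} nor any substitute for it, and does not address $(m-2,1)$, so it does not reach the stated conclusion.
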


Note that if $(a,r)=(m-2,m-1)$, then $\G$ is in the family W2, and if 
$(a,r)=(2,m-1)$, then it is in the family W4.

\begin{proof}
We proceed by induction on $n$. 
With the help of the computer algebra package 
{\sc Magma}~\cite{BCP} we checked that the lemma holds if $n \le 12$. 
From now on it is assumed that $n > 12$.

\begin{claim}
$m$ is odd
\end{claim}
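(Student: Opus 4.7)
The plan is to argue by contradiction: assume $m$ is even and write $m=2l$. First I would apply Lemma~\ref{L7} together with Lemma~\ref{cyclic block}(ii) to deduce that $\G/L\cong R_m(a,r)$ is unstable. Using Lemma~\ref{bipartite} and the non-bipartiteness of $\G$ shows $R_m(a,r)$ is non-bipartite, and a short neighborhood comparison (valid since $n>12$ gives $m>4$) rules out distinct vertices of $R_m(a,r)$ sharing a neighborhood. Thus $R_m(a,r)$ is non-trivially unstable, and by Lemma~\ref{cyclic block}(iv) the graph $\cdc(R_m(a,r))$ is edge-transitive.

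The technical heart of the argument is to identify the kernel $K$ of the action of $G$ on the set $\orb(L,\vertex(\cdc(\G)))$ of $L$-orbits. Each $\sigma\in K$ induces a ``swap pattern'' on the orbits, and preservation of adjacency forces this pattern to be constant on each edge of the quotient $\cdc(\G)/L\cong\cdc(R_m(a,r))$, hence on each connected component. Since $R_m(a,r)$ is non-bipartite, the quotient is connected, so the pattern is globally constant and $\sigma\in\{\mathrm{id},\rho^m\}=L$. Consequently $K=L$ and the epimorphism $\varphi\colon G/L\to\aut(\cdc(R_m(a,r)))$ of Lemma~\ref{cyclic block}(iv) is an isomorphism. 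In particular, $C=\sg{\rho}\ntriangleleft G$ forces the analogous cyclic subgroup to be non-normal in $\aut(\cdc(R_m(a,r)))$. So $R_m(a,r)$ satisfies Hypothesis~\ref{hypo2} in full with $n$ replaced by $m<n$, and the inductive hypothesis of Lemma~\ref{L8} gives that $l$ is odd and the canonical parameters of $R_m(a,r)$ are either $(l-2,l-1)$ or $(2,l-1)$.

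Lifting through Proposition~\ref{iso}, the first case yields $(a,r)\in\{(l\pm2,l\pm1)\}$, which is precisely the set excluded by Lemma~\ref{L6} (applicable because $m=2l$ with $l$ odd); contradiction. The second case yields $(a,r)\in\{(2,l-1),(2,l+1),(m-2,l-1),(m-2,l+1)\}$, and I would finish by counting $4$-cycles through individual edges of $\cdc(\G)$: a direct adjacency check shows that when $a=2$ no hub edge of $\cdc(\G)$ lies on a $4$-cycle while rim and spoke edges do, and when $a=m-2$ no rim edge lies on a $4$-cycle while hub edges do. Hence in every sub-possibility some edge type is distinguished from another by this local invariant, contradicting the edge-transitivity of $\cdc(\G)$. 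I expect this second case to be the main obstacle, since Lemma~\ref{L6} does not cover it and it requires organising the $4$-cycle count cleanly across the four sub-possibilities; the rest of the argument follows naturally from Lemmas~\ref{L5}, \ref{L6}, and~\ref{L7} combined with the induction.
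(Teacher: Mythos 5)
Your proposal is correct and, up to the final step, follows the paper's own route: reduce to $\G/L\cong R_m(a,r)$ via Lemma~\ref{L7}, check that this quotient is non-trivially unstable with $\cdc(R_m(a,r))$ edge-transitive, apply the induction hypothesis, and kill the sub-case $(\min(a,m-a),\min(r,m-r))=(l-2,l-1)$ with Lemma~\ref{L6}. Two remarks on the shared part. Your identification of the kernel of the $G$-action on the $L$-orbits as exactly $L$ (matching between adjacent fibres plus connectedness of the quotient) is a useful elaboration of the paper's terse ``$C/L$ is not normal in $G/L$, and therefore the induction hypothesis can be applied'': it is exactly what transports non-normality of $C$ to the corresponding cyclic subgroup of $\aut(\cdc(R_m(a,r)))$. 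On the other hand, your stated reason for excluding twin vertices in $R_m(a,r)$ (namely $m>4$) is not the operative one --- $R_m(2,1)$ has twins for every $m$; what excludes them is that twins force $a\equiv\pm2$ and $r\equiv\pm1\pmod m$, hence ($m$ being even) $a$ even and $r$ odd, i.e.\ bipartiteness, which you have already ruled out. Where you genuinely diverge is the residual case $(a,r)\in\{(2,l\pm1),(m-2,l\pm1)\}$: the paper builds the block system with blocks $\Delta_{i,j}$ of \eqref{eq:Delta} for $d=l$, notes that consecutive blocks induce a $\C_{16}$, and chases a specific stabilizer element to a contradiction (Figure~\ref{fig4}), whereas you count $4$-cycles through each edge type. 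I checked your counts: with $n=4l$, $l\ge5$ odd and $r=l\pm1$, the cycle $(u_{i,j},u_{i+1,j+1},u_{i+2,j},v_{i,j+1})$ puts every rim and spoke edge on a $4$-cycle when $a=2$ while hub edges lie on none, and when $a=m-2$ one has $2r\equiv\pm a\pmod n$, which creates hub--spoke $4$-cycles while rim edges lie on none; either way the edge-transitivity of $\cdc(\G)$ assumed in Hypothesis~\ref{hypo2} is violated. This is simpler and less error-prone than the paper's argument; its only cost is that the claims ``no hub (resp.\ rim) edge lies on a $4$-cycle'' are special to $r=l\pm1$ and must be verified against all nine candidate chords of the relevant edge, which you should write out explicitly.
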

\begin{proof}[Proof of the claim] 
On the contrary, assume that $m$ is even, say $m=2l$. By Lemma~\ref{L7}, $R_m(a,r)$ is  unstable. The graph $R_m(a,r)$ is not bipartite, for otherwise,    
so is $\G=R_n(a,r)$ due to Lemma~\ref{bipartite}, which is 
contrary to Hypothesis~\ref{hypo2}. 

Assume for the moment that $R_m(a,r)$ has two vertices with the same 
neighborhood. Consequently, $\cdc(R_m(a,r))$ has two vertices having 
identical neighborhood too. As $\cdc(R_m(a,r))$ is vertex-transitive, see the remark after Hypothesis~\ref{hypo2}, there exists a vertex of $\cdc(\G)$, which is different from $u_{0,0}$ and has the same neighborhood as $u_{0,0}$.  
On the other hand, a direct check shows that $u_{i,j}$ and $u_{0,0}$ have different neighborhood if $(i,j) \ne (0,0)$; whereas $v_{i,j}$ and $u_{0,0}$ share the same neighborhood if and 
only if $i \equiv \pm 1\!\!\pmod m$, $j=0$,   
$a \equiv \pm 2\!\!\pmod m$ and $r \equiv\pm 1\!\!\pmod m$.  
But then $a$ is even and $r$ is odd, contradicting the fact that $R_m(a,r)$ is not bipartite.
To sum up, $R_m(a,r)$ is non-trivially unstable. 

Note that $C/L$ is not normal in $G/L$, and therefore, 
the induction hypothesis can be applied to $R_m(a,r)$. As a result, $l$ is odd and 
$$
(\min(a,m-a),\min(r,m-r))=(l-2,l-1)~\text{or}~(2,l-1).
$$
Combining this with Lemma~\ref{L5} yields  
$$
(a,r) \in \big\{ (2,l-1), (2,l+1), (m-2,l-1), (m-2,l+1) \big\}.
$$

We consider only the case when $(a,r)=(2,l-1)$, the other pairs can be handled 
in the same way. 
The first lines in the proof of Lemma~\ref{L6} can be applied again 
and this results in the block system $\B$ for $G$ with blocks 
$\Delta_{i,j}$ defined in \eqref{eq:Delta} with $d=l$ (now $l >3$ because $n > 12$). 
The quotient graph $\cdc(\G)/\B$ is a cycle of length $2l$. Let $K$ be the kernel of the action of $G$ on $\B$. For $i \in \Z_l, j \in \Z_2$, 
$$
\cdc(\G)[\Delta_{i,j} \cup \Delta_{i+1,j+1}] \cong \C_{16}.
$$

Consider part of the $16$-cycles $X_1, X_2$ and $X_3$ shown in Figure~\ref{fig4}. 
Choose $\sigma \in G_{u_{0,0}}$ such that $\sigma$ swaps    
$u_{1,1}$ and $v_{0,1}$. Then $\sigma$ maps each of $X_1, X_2$ and $X_3$ 
to itself. Considering the action of $\sigma$ on $X_1$, we find 
$\sigma(u_{2,0})=u_{2,0}$. Then considering its action on $X_2$ yields 
$\sigma(v_{1,0})=v_{3l+1,0}$. Finally, considering 
its action on $X_3$ yields $\sigma(u_{3,1})=v_{2,1}$. This gives rise to a contradiction
because $v_{1,0} \sim u_{3,1}$ and $v_{3l+1,0} \not\sim v_{2,1}$.

\begin{figure}[t]
\begin{center}
\begin{tikzpicture}[scale=1.25] ---
% vertices 
\fill (0,2) circle (1.5pt); 
\fill (1,1) circle (1.5pt);
\fill (1,2) circle (1.5pt); 
\fill (2,0) circle (1.5pt); 
\fill (2,1) circle (1.5pt); 
\fill (2,2) circle (1.5pt); 
\fill (3,1) circle (1.5pt);
\fill (3,0) circle (1.5pt);
\draw (0,2) node[above]{\footnotesize $u_{0,0}$};
\draw (1,1) node[above]{\footnotesize $v_{0,1}$};
\draw (1,2) node[above]{\footnotesize $u_{1,1}$};
\draw (2,2) node[above]{\footnotesize $v_{1,0}$};
\draw (2,1) node[above]{\footnotesize $u_{2,0}$};
\draw (2,0) node[above]{\footnotesize $v_{3l+1,0}$};
\draw (3,1) node[above]{\footnotesize $u_{3,1}$};
\draw (3,0) node[above]{\footnotesize $v_{2,1}$};
% 16-cycles
\draw (0.5,2.5) node[below]{\footnotesize $X_1$};
\draw (1.5,1) node[below]{\footnotesize $X_2$};
\draw (2.5,1) node[below]{\footnotesize $X_3$};
% edges
\draw (2,0) -- (0,2) -- (2,2) -- (3,1) -- (1,1) -- (2,1) -- (1,2) (3,0) -- (2,1);
%\draw (1,3) -- (2,0);
% blocks
\draw[gray] (-0.3,-0.3) -- (0.3,-0.3) -- (0.3,3) -- (-0.3,3) -- (-0.3,-0.3); 
\draw (0,-0.5) node[below]{\footnotesize $\Delta_{0,0}$};
\draw[gray, ] (0.7,-0.3) -- (1.3,-0.3) -- (1.3,3) -- (0.7,3) -- (0.7,-0.3); 
\draw (1,-0.5) node[below]{\footnotesize $\Delta_{1,1}$};
\draw[gray] (1.7,-0.3) -- (2.3,-0.3) -- (2.3,3) -- (1.7,3) -- (1.7,-0.3); 
\draw (2,-0.5) node[below]{\footnotesize $\Delta_{2,0}$};
\draw[gray] (2.7,-0.3) -- (3.3,-0.3) -- (3.3,3) -- (2.7,3) -- (2.7,-0.3); 
\draw (3,-0.5) node[below]{\footnotesize $\Delta_{3,1}$};
\end{tikzpicture}
\caption{Part of the 16-cycles $X_1, X_2$ and $X_3$ in the graph 
$\cdc(R_{4l}(2,l-1))$.} 
\label{fig4}
\end{center}
\end{figure}
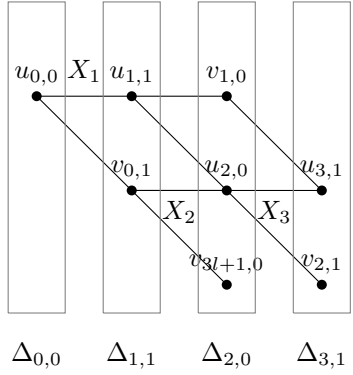
\end{proof}

Since $m$ is odd, it follows that 
$$
\cdc(\G/L) \cong \cdc(R_m(a,r)) \cong R_{2m}(a',r'),
$$
where $a'$ and $r'$ are defined in \eqref{eq:a'r'}.
Let $\Sigma=R_{2m}(a'',r'')$ where $a''=\min(a',2m-a')$ and $r''=\min(r',2m-r')$.
By Proposition~\ref{iso}, $\Sigma$ is edge-transitive, hence it is one of the graphs in the families (a)--(d) of Theorem~\ref{KKM1}.

The fact that $\Sigma$ is bipartite shows that $\Sigma$ cannot be in the 
family (b). As $m$ is odd, it is clear that it cannot be in the family (c) neither. 
If $\Sigma$ is in the family $(d)$, then it follows that $C \lhd G$; this is impossible. 
We conclude that $\Sigma$ is in the family (a), i.e., $a' \equiv \pm 2\!\!\pmod n$ and 
$r' \equiv \pm 1\!\!\pmod n$. If $a$ is even, then $a'=a \le m$, hence $a=2$; and if 
$a$ is odd, then $a'=a+m > m$, hence $a=m-2$. Similarly, we find that 
$r=1$ or $m-1$. As $(a,r) \ne (2,1)$, because $R_n(a,r)$ is not bipartite,  
we have that 
$$
(a,r) \in \big\{ (2,m-1), (m-2,1), (m-2,m-1) \big\}.
$$

We finish the proof by excluding the possibility $(a,r)=(m-2,1)$. 
On the contrary, assume that $(a,r)=(m-2,1)$. Then $G$ admits the block system 
$\B$ with blocks $\Delta_{i,j}$ defined in \eqref{eq:Delta} with $l=m$. 
Let $K$ be the kernel of the action of $G$ on $\B$.
Label the vertices in $\Delta_{i,j}$ with numbers from $1$ to $4$ as 
shown  in Figure~\ref{fig5}.  For $\sigma \in K$ and for an integer $k \ge 0$, 
let $\sigma_k$ be the permutation of $\{1,2,3,4\}$ induced by the restriction of 
$\sigma$ to $\Delta_{i,j}$, where $k \equiv i\!\! \pmod m$ and $k \equiv j\!\! \pmod 2$. 
Note that $\sigma_i=\sigma_j$ holds if $i \equiv j\!\! \pmod {2m}$. 
Choose $\sigma \in G_{u_{0,0}}$ such that $\sigma$ swaps $u_{1,1}$ and 
$v_{m+2,1}$. Then we compute  
$$
\sigma_0=(2,4),~\sigma_1=(1,2)(3,4),~
\sigma_2=(1,3),~\sigma_3=(1,4)(2,3),~\sigma_4=\sigma_0. 
$$
This shows that $\sigma_i=\sigma_j$ if and only if $i \equiv j\!\!\pmod 4$.  
On the other hand, $\sigma_0=\sigma_{2m}$, contradicting the fact that $m$ is odd. 

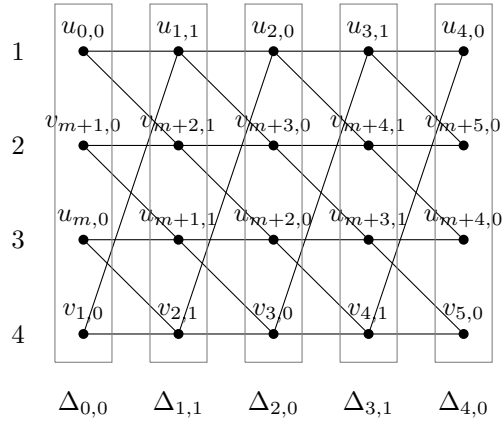
\begin{figure}[t]
\begin{center}
\begin{tikzpicture}[scale=1.25] ---
% vertices 
\fill (0,3) circle (1.5pt); 
\fill (0,2) circle (1.5pt);
\fill (0,1) circle (1.5pt);
\fill (0,0) circle (1.5pt);
\fill (1,3) circle (1.5pt);
\fill (1,2) circle (1.5pt); 
\fill (1,1) circle (1.5pt); 
\fill (1,0) circle (1.5pt); 
\fill (2,3) circle (1.5pt); 
\fill (2,2) circle (1.5pt); 
\fill (2,1) circle (1.5pt); 
]\fill (2,0) circle (1.5pt); 
\fill (3,3) circle (1.5pt);
\fill (3,2) circle (1.5pt);
\fill (3,1) circle (1.5pt);
\fill (3,0) circle (1.5pt);
\fill (4,3) circle (1.5pt);
\fill (4,2) circle (1.5pt);
\fill (4,1) circle (1.5pt);
\fill (4,0) circle (1.5pt);
\draw (0,3) node[above]{\footnotesize $u_{0,0}$};
\draw (0,2) node[above]{\footnotesize $v_{m+1,0}$};
\draw (0,1) node[above]{\footnotesize $u_{m,0}$};
\draw (0,0) node[above]{\footnotesize $v_{1,0}$};
\draw (1,3) node[above]{\footnotesize $u_{1,1}$};
\draw (1,2) node[above]{\footnotesize $v_{m+2,1}$};
\draw (1,1) node[above]{\footnotesize $u_{m+1,1}$};
\draw (1,0) node[above]{\footnotesize $v_{2,1}$};
\draw (2,3) node[above]{\footnotesize $u_{2,0}$};
\draw (2,2) node[above]{\footnotesize $v_{m+3,0}$};
\draw (2,1) node[above]{\footnotesize $u_{m+2,0}$};
\draw (2,0) node[above]{\footnotesize $v_{3,0}$};
\draw (3,3) node[above]{\footnotesize $u_{3,1}$};
\draw (3,2) node[above]{\footnotesize $v_{m+4,1}$};
\draw (3,1) node[above]{\footnotesize $u_{m+3,1}$};
\draw (3,0) node[above]{\footnotesize $v_{4,1}$};
\draw (4,3) node[above]{\footnotesize $u_{4,0}$};
\draw (4,2) node[above]{\footnotesize $v_{m+5,0}$};
\draw (4,1) node[above]{\footnotesize $u_{m+4,0}$};
\draw (4,0) node[above]{\footnotesize $v_{5,0}$};
\draw (-0.5,3) node[left]{\footnotesize $1$};
\draw (-0.5,2) node[left]{\footnotesize $2$};
\draw (-0.5,1) node[left]{\footnotesize $3$};
\draw (-0.5,0) node[left]{\footnotesize $4$};
% 4-cycles
%\draw (0.5,2) node[below]{\footnotesize $X_1$};
%\draw (1.5,1) node[below]{\footnotesize $X_2$};
%\draw (2.5,2) node[below]{\footnotesize $X_3$};
%\draw (2.5,0) node[below]{\footnotesize $X_4$};
% edges
\draw (0,3) -- (4,3) (0,2) -- (4,2)  (0,1)-- (4,1) (0,0) -- (4,0); 
\draw (0,3) -- (3,0) (0,2) -- (2,0)  (0,1) -- (1,0) (1,3) -- (4,0) (2,3) -- (4,1) (3,3) -- (4,2);
\draw (0,0) -- (1,3) (1,0) -- (2,3) (2,0) -- (3,3) (3,0) -- (4,3);
% blocks
\draw[gray] (-0.3,-0.3) -- (0.3,-0.3) -- (0.3,3.5) -- (-0.3,3.5) -- (-0.3,-0.3); 
\draw (0,-0.5) node[below]{\footnotesize $\Delta_{0,0}$};
\draw[gray, ] (0.7,-0.3) -- (1.3,-0.3) -- (1.3,3.5) -- (0.7,3.5) -- (0.7,-0.3); 
\draw (1,-0.5) node[below]{\footnotesize $\Delta_{1,1}$};
\draw[gray] (1.7,-0.3) -- (2.3,-0.3) -- (2.3,3.5) -- (1.7,3.5) -- (1.7,-0.3); 
\draw (2,-0.5) node[below]{\footnotesize $\Delta_{2,0}$};
\draw[gray] (2.7,-0.3) -- (3.3,-0.3) -- (3.3,3.5) -- (2.7,3.5) -- (2.7,-0.3); 
\draw (3,-0.5) node[below]{\footnotesize $\Delta_{3,1}$};
\draw[gray] (3.7,-0.3) -- (4.3,-0.3) -- (4.3,3.5) -- (3.7,3.5) -- (3.7,-0.3); 
\draw (4,-0.5) node[below]{\footnotesize $\Delta_{4,0}$};
\end{tikzpicture}
\caption{The subgraph of $\cdc(R_{2m}(m-2,1))$ induced by the set 
$\Delta_{0,0} \cup \cdots \cup \Delta_{4,0}$.} 
\label{fig5}
\end{center}
\end{figure}
\end{proof}

\section*{Acknowledgement}
The authors are grateful to the anonymous referees for their valuable comments which helped 
to improve the text considerably.
%------------------------------------------------------------------------------------%

\end{document}